\newcommand{\doubletilde}[1]{\widetilde{\raisebox{0pt}[0.85\height]{$\widetilde{#1}$}}}
\newcommand{\pa}[1]{\left( #1 \right)}
\newcommand{\op}[1]{\left\| #1 \right\|}
\newcommand{\N}{\mathbb{N}}
\newcommand{\Z}{\mathbb{Z}}
\newcommand{\Q}{\mathbb{Q}}
\newcommand{\C}{\mathbb{C}}
\newcommand{\R}{\mathbb{R}}
\newcommand{\Hil}{\mathbb{H}}
\newcommand{\Span}{\mathrm{Span}}
\newcommand{\re}{\mathrm{Re}}
\newcommand{\Diag}{\mathrm{diag}}
\newcommand{\M}{\mathcal{M}}
\newcommand{\ev}{\mathrm{ev}}
\newcommand{\supp}{\mathrm{supp}}
\newcommand{\uperp}{\widetilde{u}^{k,\ell}_\perp}
\newcommand{\cloc}{C^{m-1}_{\mathrm{loc}}}
\newcommand{\opnorm}{\@ifstar\@opnorms\@opnorm}
\newcommand{\@opnorms}[1]{%
  \left|\mkern-1.5mu\left|\mkern-1.5mu\left|
   #1
  \right|\mkern-1.5mu\right|\mkern-1.5mu\right|
}
\newcommand{\@opnorm}[2][]{%
  \mathopen{#1|\mkern-1.5mu#1|\mkern-1.5mu#1|}
  #2
  \mathclose{#1|\mkern-1.5mu#1|\mkern-1.5mu#1|}
}
\pgfplotsset{compat=1.10}
\newcommand{\wt}[1]{\widetilde{#1}}
\def\[#1\]{\begin{align*}#1 \end{align*}}
\newcommand{\kolomtwee}[2]{\left ( \begin{matrix} #1\cr #2 \end{matrix} \right)}
\DeclarePairedDelimiter\floor{\lfloor}{\rfloor}
\newtheorem{thm}{Theorem}[section]
\newtheorem{definition}[thm]{Definition}
\newtheorem{theorem}[thm]{Theorem}
\newtheorem{proposition}[thm]{Proposition}
\newtheorem{corollary}[thm]{Corollary}
\newtheorem{lemma}[thm]{Lemma}
\newtheorem{example}[thm]{Example}
\newtheorem{maintheorem}[thm]{Main Theorem}
\newtheorem*{remark}{Remark}
\title{Time-periodic solutions of Hamiltonian PDEs using pseudoholomorphic curves}
\author{Oliver Fabert \\ Niek Lamoree}
\date{ }
\begin{document}

\maketitle
\vspace{-8mm}
\abstract{We extend the pseudoholomorphic curve methods from Floer theory to infinite-dimensional phase spaces and use our results to prove the existence of a forced time-periodic solution to a general Hamiltonian PDE with regularizing nonlinearity. In particular, when the nonlinearity is sufficiently regularizing, bounded and time-periodic, we prove an infinite-dimensional version of Gromov-Floer compactness by using ideas from the theory of Diophantine approximations to overcome the small divisor problem. 
Furthermore, in the case when the infinite-dimensional phase space is a product of a finite-dimensional closed symplectic manifold with linear symplectic Hilbert space, we prove a cup-length estimate for the number of periodic solutions.}

\tableofcontents

\section*{Introduction}
It is well-known that problems in classical mechanics can be formalized and solved in the language of Hamiltonian systems on finite-dimensional symplectic manifolds, or phase spaces. This led to a rather novel mathematical branch called symplectic topology. Most of the ground-breaking results in symplectic topology rely on the existence of so-called pseudoholomorphic curves, that is, maps from a Riemann surface into the finite-dimensional symplectic manifold equipped with a compatible almost complex structure, satisfying a Cauchy-Riemann-type equation (see \cite{mcduffjholo}). They were introduced by M. Gromov in his seminal paper \cite{gromov1985}. In order to prove the existence of time-periodic solutions of Hamiltonian systems on finite-dimensional phase spaces, A. Floer has developed the tool of Floer homology which is based on the study of moduli spaces of so-called Floer curves which satisfy a Cauchy-Riemann-type equation involving a zeroth order Hamiltonian term (see \cite{floer1989}). The key technical result on which his theory relies is a compactness result for the moduli space of Floer curves, called Gromov-Floer compactness which, among other things, crucially uses compactness of the target manifold (see also \cite{fh1}). 

Generalizing from point particles to continuous fields, and hence from classical mechanics to classical field theory, one arrives at Hamiltonian systems which are defined on infinite-dimensional phase spaces, such as symplectic Hilbert spaces. More generally, it is known that a number of important partial differential equations, such as the (nonlinear) Schrödinger equation, the (nonlinear) wave equation, the Korteweg-de Vries equation, and many more, can be viewed as infinite-dimensional Hamiltonian systems. Such partial differential equations are also called \emph{Hamiltonian partial differential equations} (see \cite{kuksinbook}). We stress that it is a general feature of Hamiltonian PDEs that the Hamiltonian is typically only densely defined on the symplectic Hilbert space. In these cases, the best thing we can expect is an sc-Hamiltonian system in the sense of \cite{hwzsc}. 

In this paper we show how the pseudoholomorphic curve methods of Hamiltonian Floer theory can be generalized to the infinite-dimensional setup of Hamiltonian PDE with so-called regularizing nonlinearities. As a first step towards generalizing Floer theory from finite dimensions to infinite dimensions, we prove a version of Gromov-Floer compactness and also readily use our result to establish the existence of time-periodic solutions. There has been a lot of great work on finding solutions, see e.g. \cite{rabinowitz}, or \cite{berti} for an overview of the current state of the art (also see \Cref{subqsection} for more references). In this paper our aim is to show how pseudoholomorphic methods can be applied to this problem. An obvious problem comes from the fact that the Hamiltonian is only densely defined on the symplectic Hilbert space. As it turns out, one of the main new arising challenges is a small divisor problem: while for generic time and space periods the underlying linear Hamiltonian PDE only has the trivial periodic solution and the return map only has eigenvalues different from one, there is always a subsequence of eigenvalues converging to one. 

Apart from showing that the bubbling-off argument still works in order to uniformly bound derivatives, as main result we show how regularizing the nonlinearity of a Hamiltonian PDE needs to be in order to guarantee that Gromov-Floer compactness still holds. It turns out that this is intimately related with the aforementioned small divisor problem and ultimately with the theory of diophantine approximations. We define the concept of (weakly) admissible nonlinearities in order to classify the types of nonlinearities for which Gromov-Floer compactness can still be established and we further study the regularity of the Floer curves and the time-periodic solution in both the flow and time coordinates as well as the extra spatial coordinate.

We want to emphasize that this paper is mainly addressed at researchers with a background in Hamiltonian Floer theory, who are interested in the generalization of these techniques to the infininte-dimensional case of Hamiltonian PDEs. While we cite some well-established results from finite-dimensional Floer theory without proof, this paper is written in such a way that we do not assume any prior knowledge about Hamiltonian partial differential equations, small divisor problems and Diophantine approximations. In particular, we make no claim that the results about periodic solutions could not be obtained using different methods. Our ultimate goal is to construct a full Floer homology theory for Hamiltonian PDEs with regularizing nonlinearities. As a first result which needs pseudoholomorphic curve methods, we prove a cup-length estimate for a Hamiltonian system on a phase space which is a product of a closed finite-dimensional symplectic manifold with linear symplectic Hilbert space.\\\par

This paper is organized as follows: in \Cref{introduction} we give a brief introduction to nonlinear Hamiltonian PDEs and establish notation. There and in \Cref{hilbertdiophantine} we give a number of examples. In \Cref{hilbertdiophantine} we furthermore discuss the part of the Hamiltonian which contains the differential operator, as well as illustrate the so-called small divisor problem which occurs when passing to infinite dimensions. In the same section and in \Cref{admissiblenonlin}, we give a number of admissibility conditions and define the class of Hamiltonians for which our results hold, and we give a counterexample to show that some of these conditions cannot be relaxed. \par
\Cref{mainthmsection} contains a summary of the main results. In \Cref{fdsection} we recall well-established results from finite-dimensional Floer theory in order to establish the existence of Floer curves in finite-dimensional linear symplectic spaces. In \Cref{bubblingoffsection} we generalize the bubbling-off analysis for finite-dimensional pseudoholomorphic curves and show that the derivatives of the sequence of Floer curves are bounded; this includes a standard elliptic regularity argument to include higher derivatives. Using a series of estimates, \Cref{sdp} shows how the higher-dimensional components of Floer curves can be controlled in the presence of the small divisor problem. In \Cref{completingproof} we complete the proof of the main theorem and subsequently generalize this to a wider class of Hamiltonians in \Cref{subqsection}. \Cref{cuplengthsection} provides a cup-length estimate for the number of periodic solutions when we consider Hamiltonian systems on the product of a finite-dimensional closed symplectic manifold with linear symplectic Hilbert space. Finally there are two appendices, the first of which introduces sc-Hamiltonian flows, and the second containing details on Hamiltonian curvature which is used to establish our compactness theorem.

\section{Nonlinear Hamiltonian PDEs}
\label{introduction}
Let us start by describing the framework in which we will study our PDEs. Let $(\Hil,\omega)$ be a separable symplectic Hilbert space, meaning a separable Hilbert space with an anti-symmetric bilinear map $\omega$ for which $i_\omega:\Hil\to\Hil^*$ is an isomorphism. Following \cite{kuksin95}, we fix a complete Darboux basis $\{e_n^\pm\}_{n\in \Z}$ in the sense that $\omega(e_i^+,e_j^-)=\delta_{ij}$. We define an anti-symmetric linear operator $J$ by $Je_n^\pm:=\mp e_n^\mp$ so that we can write the symplectic form as $\omega=\langle \cdot,J\cdot\rangle$ for some equivalent inner product on $\Hil$ which we fix from now on. Introducing the complex basis $z_n:=2^{-1/2}(e_n^++ie_n^-)$ for $n\in\Z$, the Hilbert space $\Hil$ can be identified with a subspace of the complexified Hilbert space $\Hil\otimes\C$ spanned by $z_n$, $n\in\Z$, where $J=i$ and $\omega=idz\wedge d\overline{z}$. 
For a smooth Hamiltonian function $H:\Hil\to\R$, the symplectic gradient $X_H$ is defined by $dH(\cdot)=\omega(X_H,\cdot)$ so that $X_H=J\nabla H$. We then write the Hamiltonian equation as
\begin{align}
\label{hamde}
\dot{u}=X_H(u)=J\nabla H(u).
\end{align}
In this paper, we consider general time-dependent Hamiltonian PDEs of the form
\[
\dot{u}=JAu+J\nabla F_t(u)
\]
with underlying Hamiltonian
\begin{align}
\label{hamform}
H_t(u)=\frac{1}{2}\langle Au,u\rangle+F_t(u)=:H_A(u)+F_t(u)
\end{align}
for some time-dependent and $T$-periodic nonlinearity $F_t$ defined in \Cref{admissible}, and quadratic term $H_A$ (also called the free term) defined by a linear, possibly unbounded, self-adjoint (differential) operator $A:\text{Dom}(A)\subset\Hil\to\Hil$ where the domain of $A$ is dense in $\Hil$. We will always assume the nonlinearity to be smooth in the time variable. We restrict to the case where the Hilbert space is a space of functions in one variable, which depends on the specific Hamiltonian PDE. See \cite{kuksin95}, \cite{berti} and \cite{digregorio} for examples of Hamiltonian PDEs. \\\par


Before we give some examples of Hamiltonian PDEs with Hamiltonian of the form \eqref{hamform}, let us first address the problem that the 
free term $H_A$ is actually only densely defined when the differential operator $A$ is 
of positive order. However, the free flow $\phi_t^A=e^{tJA}$ is a linear unitary map and hence extends to a linear unitary map on the whole space $\Hil$. Indeed, we obtain an sc-Hamiltonian flow in the sense of \Cref{appendix}. Even though the existence of the free flow can be established, we have to be careful about the kind of nonlinearities we allow for the flow of the full Hamiltonian to still be guaranteed: even when $F_t$ is smooth, there need not exist a flow of $F_t$ due to compactness problems. The existence of the flow is a very delicate problem and a big amount of great work has been done in this direction. In this paper we avoid this problem by working with nonlinearities for which finite-dimensional approximations exist and are immediate. In particular, below we consider two important examples of Hamiltonian PDEs with regularizing nonlinearities (as in e.g. \cite{kuksin2015kam} or \cite{kuksin2016kam}). Such nonlinearities appear in nonlocal (or quasilocal) classical field theories, where fields have nonlocal interactions, and actually lead to a Hamiltonian partial \emph{integro-}differential equation. In contrast to local models, such nonlocal models, which in many cases model reality even better, are almost never integrable. The models we consider below can have arbitrary nonlocality, or quasilocality \cite{tomboulis}. For the relevance of nonlocal Hamiltonian PDE's see e.g. \cite{eringen}, \cite{haas} or \cite{zhong}.
\begin{example}[Nonlinear wave equation (NLW)]
\label{nlw1}
We write the nonlinear wave equation with regularizing nonlinearity as
\begin{align}
\label{nlweqn}
\ddot{\varphi}-\varphi_{xx}-\partial_1g_t(\varphi*\psi,x)*\psi-c_t=0,\qquad \varphi=\varphi(t,x)=\varphi(t,x+X),\; x\in S^1=\R/X\Z
\end{align}
with $\psi\in C^h(S^1)$ for $h>0$ and $g_{t+T}=g_t$ being bounded, smooth in both components and having bounded derivatives in the first component and $c_t=c_{t+T}\in C^h(S^1)$ denoting some time-dependent exterior potential \footnote{Some authors write NLW with an extra term $+mu$ with $m\geq0$ on the LHS. We choose to set $m=0$.}. A specific example of this would be a nonlocal sine-Gordon equation with exterior potential $$\ddot{\varphi}-\varphi_{xx}-a_t\sin(\varphi*\psi)*\psi-c_t=0.$$ This appears, for example, in a nonlocal Frenkel-Kontorova model with time- and space-periodic coefficient $a_t$ and exterior potential $c_t$. 
\par
The nonlinear wave equation is a Hamiltonian PDE on the Hilbert space $\Hil=L^2(S^1,\R)\times L^2(S^1,\R)$. It can be written as HPDE as
\[
\kolomtwee {\dot{\varphi}}{\dot{\pi}}=\kolomtwee {\pi}{\varphi_{xx}+\partial_1g_t(\varphi*\psi,x)*\psi+c_t}=J\nabla_{L^2}H_t(\varphi,\pi)
\]
with $J(\varphi,\pi)=(-\pi,\varphi)$ and with Hamiltonian
\[
H_t(\varphi,\pi)=\frac{1}{2}\int_0^X \left(-\varphi_x^2- \pi^2+2g_t(\varphi*\psi,x)+2c_t\varphi\right)\, dx.
\]
However, this Hilbert space on which NLW is modeled does not admit a complete Darboux basis which is compatible with the differential operator $A$. We will study a different structure in \Cref{nlw2}.
\end{example}\par
\begin{example}[Nonlinear Schrödinger equation (NLS)]
\label{nls1}
Consider the nonlinear Schrödinger equation with regularizing nonlinearity
\begin{align*}
i\dot{u}+u_{xx}+\partial_1f_t\left(\vert u*\psi\vert^2,x\right)(u*\psi)*\psi=0,\qquad u=u(t,x)=u(t,x+X),\; x\in S^1=\R/X\Z
\end{align*}
with $\psi\in C^h(S^1)$ for $h>0$ and with $f_{t+T}=f_t$ smooth in both components. We also require that the map $\widetilde{f}_t:(s,x)\mapsto f_t(|s|^2,x)$ is bounded and has bounded derivatives in the first component. The Hilbert space is $L^2(S^1,\C)$. The Hamiltonian is given by
\[
H_t(u)=\frac{1}{2}\int_0^X\left(-\vert u_x\vert^2+f_t(\vert u*\psi\vert^2,x)\right)\,dx.
\]
This Hamiltonian PDE descends to an infinite-dimensional Hamiltonian system on projective Hilbert space.
\end{example}
%
See \cite{fabertnlse} for more details on this last example. In contrast to \cite{fabertnlse}, in this paper we will not focus on specific examples but rather consider nonlinear Hamiltonian PDEs with general nonlinearities on linear space.\par
Note that even though the nonlinearity is not local, it can be quasilocal in the sense that the smoothing kernel $\psi$ can have arbitrarily small support.
\section{Diophantineness condition}
\label{hilbertdiophantine}
Let us start with the free term of the Hamiltonian. Since $A$ is self-adjoint, we can diagonalize it. Here we have to make an assumption.
\begin{definition}
\label{admissibleA}
The differential operator $A$ of the free term $H_A$ is called \textup{admissible} when it is pure of degree $d\geq 1$ and there exists a complete Darboux basis $(e^{\pm}_n)$ of eigenvectors of the operator $A$ in the sense that $Ae^\pm_n=\lambda_ne^\pm_n$ with real eigenvalues of the form $\lambda_n=an^d$, $n\in\Z$ and $a\in\R_{>0}$. 
\end{definition}
From now on we assume that the operator $A$ is admissible. Note that the condition that $e_n^+$ and $e_n^-$ have the same eigenvalue and form a Darboux basis is equal to the statement that the commutator $[J,A]=0$. Our two main examples, see below, satisfy this condition and have eigenvalues of the form $\lambda_n=(2\pi/X)^dn^d$ with $X>0$ denoting the space periodicity. \emph{Despite the fact that our results apply to general symplectic Hilbert spaces $\Hil$, we will hence assume in what follows that $a=(2\pi/X)^d$.} So let us choose such a complete Darboux basis consisting of eigenvectors of $A$, so that $Ae^\pm_n=\lambda_ne^\pm_n$. Then the operator $JA$ is diagonal in the complex basis spanned by $z_{n}=2^{-1/2}(e_n^++ ie_n^-)$ with eigenvalues $i\lambda_n$ for $n\in\Z$. Following \cite{kuksin95}, we note that the flow maps of the free Hamiltonian $\phi_t^A=e^{tJA}$ define a family of linear symplectomorphisms on $\Hil$ which restrict to linear symplectomorphisms on the finite-dimensional subspaces $\C^{2k+1}=\Span_\C\{z_{n}\}_{n=-k}^k$. 
The eigenvalues of the time-$T$ flow are $e^{ian^dT}$. 
Let us now consider the examples NLW and NLS from before (see \cite{kuksin95}).
\begin{example}[NLW]
\label{nlw2}
The nonlinear wave equation in one space dimension was modelled in \Cref{nlw1} on $\Hil=L^2(S^1,\R)\times L^2(S^1,\R)$. However, we want the Hilbert basis to be a complete Darboux basis of eigenvectors of the operator $A$. This forces us to choose $\Hil=W^{\frac{1}{2},2}(S^1,\R)\times W^{\frac{1}{2},2}(S^1,\R)$ with $S^1=\R/X\Z$. In this setting, we study equation \eqref{nlweqn} with the same nonlinearity. 
Now we write the operator $A$ as $A=\Diag(B,B)$ with $B=\sqrt{-\partial_x^2}$ and we write the nonlinear wave equation as
\[
\kolomtwee {\dot{\varphi}}{\dot{\pi}}=\kolomtwee {-B\pi}{B\varphi-B^{-1}\partial_1g_t(\varphi*\psi)*\psi-B^{-1}c_t}.
\] 
The inner product on $W^{\frac{1}{2},2}(S^1,\R)$ is
\[
\langle f,g\rangle=\frac{1}{2\pi}\int_0^XBf(x)g(x)dx
\]
and the Hamiltonian on $\Hil$ is given by
\[
H_t(\varphi,\pi):=\frac{1}{2}\left\langle A\kolomtwee\varphi\pi,\kolomtwee\varphi\pi\right\rangle+\frac{1}{2\pi}\int_0^Xg_t(\varphi*\psi,x)+c_t\varphi\;dx,
\]
where we extend the inner product componentwise to $W^{\frac{1}{2},2}(S^1,\R)\times W^{\frac{1}{2},2}(S^1,\R)$. The complete Darboux basis is then given by
\[
&e_n^+=\frac{1}{\sqrt{|n|}}(\xi_n(x),0),\qquad e_n^-=\frac{1}{\sqrt{|n|}}(0,\xi_n(x));\\
&\xi_n(x)=\begin{cases}\sqrt{2}\cos\left(\frac{2\pi n x}{X}\right)\quad n\leq 0\\
\sqrt{2}\sin\left(\frac{2\pi n x}{X}\right)\quad n>0
\end{cases}
\]
and the eigenvalues of $A$ are 
\[
Ae_n^\pm=\frac{2\pi n}{X}e_n^\pm
\]
so $\lambda_n=an^d$ with $a=2\pi/X$ and $d=1$. The Hilbert space can be identified with the subspace $\Span_\C\{z_n\}_{n\in\Z}$ of the complexified Hilbert space $\Hil\otimes\C$, with 
\[
z_n=\frac{1}{\sqrt{2|n|}}(\xi_n(x),i\xi_n(x))
\]
and the flow maps are 
\[
\phi_T^Az_n=e^{iT\frac{2\pi n}{X}}z_n.
\]
\end{example}
\begin{example}[NLS]
\label{nls2}
The Hilbert space for this PDE is $L^2(S^1,\C)$ with inner product which, when viewed as a real vector space with inner product
\[
\langle f,g\rangle=\re\frac{1}{X}\int_0^X f\overline{g}dx
\]
has complete Darboux basis given by 
\[
e_n^+=e_n,\qquad e_n^-=-ie_n
\]
where $\{e_n\}_{n\in\Z}$ is the complete system of eigenfunctions of $-\partial_x^2$ given by
\[
e_n(x)=e^{i\frac{2\pi nx}{X}}.
\]
These have eigenvalues $\lambda_{n}=(2\pi n/X)^2$. We can identify this real Hilbert space $(\Hil,J=i)$ with the complex Hilbert space spanned by $z_n:x\mapsto \sqrt{2}e^{i\frac{2\pi nx}{X}}$ with $n\in\Z$. The time-$T$ flow of the free part of the Hamiltonian is 
\[
\phi_T^Az_{ n}=e^{iT\left(\frac{2\pi n }{X}\right)^2}z_{ n}.
\]
Writing the eigenvalues in the form suggested above, we get $\lambda_n=an^d$ where $a=(2\pi/X)^2$ and $d=2$. 
\end{example}
%
Here we already catch a glimpse of what will be a problem we need to address, which does not appear in the finite-dimensional case: in order to apply the machinery of Floer theory in our infinite-dimensional situation, we would like to ensure that the system is nondegenerate, i.e. that the time-$T$ flow map has no eigenvalue equal to $1$. This in turn means that $aT/2\pi$ must not be rational. Compare this condition for \Cref{nlw2} with \cite{rabinowitz}, who proved existence of forced time-periodic solutions when the number $aT/(2\pi)=T/X$ \textit{is} rational. For general eigenvalues $\lambda_n=an^d$ we need $aT/2\pi=T/X^d\cdot (2\pi)^{d-1}$ to be irrational, where we recall  that $a=(2\pi)^d/X^d$ implicitly depends on the space period $X$. However, even if these numbers are irrational, we are faced with the problem that a subsequence of the eigenvalues of the flow will always converge to $1$. Let us illustrate this problem somewhat: to prove the existence of a solution to the nonlinear PDE, we will have to assume that the time-$T$ flow of the free Hamiltonian $\phi_T^A$ has only one fixed point, or, alternatively, that the only solution to the free Hamiltonian equation
\begin{align}
\label{freehpde}
\dot{u}=JAu,\qquad u(0)=u(T)
\end{align}
is $u\equiv0$. When $aT/2\pi$ is irrational, this forces the only solution to \eqref{freehpde} to be $u\equiv0$: if $u_0$ is a fixed point of the time-$T$ free flow, then expanding $u_0$ as $\sum \hat{u}_0(n)z_n$ shows that 
\[
\phi^A_Tu_0=\phi^A_T\sum_{n}\hat{u}_0(n)z_{n}=\sum_{n}e^{iaTn^d}\hat{u}_0(n)z_{n}.
\]
So as long as $aT/2\pi$ is irrational, for any $n$ there are no eigenvalues equal to one. In the limit, however, this is not guaranteed: there could be a subsequence of $(e^{iT\lambda_n})_n$ converging to $1$. This is an instance of the small divisor problem. To solve this problem, we need to control the way in which the eigenvalues (or a subsequence of them) converge to $1$. The essence of our approach is that we should not be able to approximate the irrational number $aT/2\pi$ too well by rational ones. More formally, we make the following definition.
\begin{definition}
\label{admissibleXT}
We call the pair of time and space periods $(T,X)\in\R_{>0}\times\R_{>0}$ \textup{admissible} when the number $aT/2\pi=(2\pi)^{d-1}\cdot T/X^{d-1}$ is Diophantine.
\end{definition}  
In particular, the Diophantine number $aT/2\pi$ has finite \textit{irrationality measure}. Let us explain this statement: every real number can be approximated by a continued fraction and this gives a measure of how good a real number can be approximated by rationals. For all $\sigma\in\R$ there exists $p/q\in\Q$ such that
\[
\left|\sigma-\frac{p}{q}\right|<\frac{1}{q^2}.
\]
The irrationality measure gives is a measure of how good this approximation can be. It is defined as the infimum of the set of real numbers $\rho$ for which
\[
\frac{c}{q^\rho}<\left|\sigma-\frac{p}{q}\right|<\frac{1}{q^2}
\]
holds for all $p/q\in\Q$ with some fixed $c>0$, and is usually denoted by $r$. In particular, it is at least $2$. It turns out that the set of numbers of irrationality measure $2$ (and hence of Diophantine numbers) has full Lebesgue measure \cite[theorem E.3]{bugeaud}  For $\pi$, it is shown in \cite{salikhov} that $r<8$. This is used, for example, in \cite{fabertnlse} to prove a statement similar our main theorem for the NLSE on projective Hilbert space. In what follows, by \emph{generic time period} $T$ we will mean $T$ for which $aT/2\pi$ has irrationality measure $r=2$.\\\par
Before turning to the nonlinearity, let us first give an example which shows that the Diophantineness condition (and subsequent regularity conditions for the nonlinearity) are really necessary.
\begin{example}[Counterexample]
\label{counterexample}
Consider the linear wave equation with exterior potential
\begin{align}
\label{nonhomwave}
\ddot{\varphi}=\varphi_{xx}+c_t,\qquad c_{t+T}=c_t.
\end{align}
Let us write $\varphi$ and $c$ as Fourier series 
\begin{align*}
\varphi&=\sum_{p,n\in\Z}\hat{\varphi}(p,n)e^{\frac{2\pi i pt}{T}}e^{\frac{2\pi i nx}{X}} \nonumber\\
c&=\sum_{p,n\in\Z}\hat{c}(p,n)e^{\frac{2\pi i pt}{T}}e^{\frac{2\pi i nx}{X}}
\end{align*}
with $\overline{\hat{\varphi}(p,n)}=\hat{\varphi}(-p,-n)$ and $\overline{\hat{c}(p,n)}=\hat{c}(-p,-n)$ such that the functions are real-valued. Term-wise, \eqref{nonhomwave} becomes
\[
\hat{\varphi}(p,n)\pa{\pa{\frac{2\pi n}{X}}^2-\pa{\frac{2\pi p}{T}}^2}e^{\frac{2\pi i pt}{T}}e^{\frac{2\pi i nx}{X}}=\hat{c}(p,n)e^{\frac{2\pi i pt}{T}}e^{\frac{2\pi i nx}{X}}
\]
or
\begin{align*}
\hat{\varphi}(p,n)=\frac{\hat{c}(p,n)\frac{T^2}{(2\pi n)^2}}{\pa{\frac{T}{X}-\frac{p}{n}}\pa{\frac{T}{X}+\frac{p}{n}}}.
\end{align*}
When $T/X$ is not Diophantine, there is a subsequence $(p',n')\subset(p,n)_{p,n\in\Z}$ for which the denominator in the expression for $\hat{\varphi}(p',n')$ goes to zero exponentially fast. If we define the exterior potential $c_t$ by
\[
\hat{c}(p,n):=\begin{cases}
\frac{(2\pi n)^2}{T^2}\pa{\frac{T}{X}-\frac{p}{n}}\pa{\frac{T}{X}+\frac{p}{n}}\qquad &(p,n)=(p',n')\\
0\qquad &\text{otherwise}
\end{cases}
\]
then $c_t$ is smooth, but $\hat{\varphi}(p',n')$ is constantly $1$, so that there is no solution. Writing the exterior potential in the Hamiltonian for NLW as in \Cref{nlw2} as $\langle (c_t,0),\cdot\rangle$, we see that $c_t$ should have some minimal regularity, depending on the irrationality measure of $T/X$, to ensure the existence of a solution.
\end{example}


\section{$A$-admissible and weakly $A$-admissible nonlinearities}
\label{admissiblenonlin}
In order to deal with the asymptotic degeneracy caused by the small divisor problem, our key idea is as follows: in order for Gromov-Floer compactness to hold, we want to assume that the nonlinearity can be approximated by finite-dimensional ones better than the eigenvalues of the time-$T$ flow of the free Hamiltonian approach $1$. This puts restrictions on the regularity of the nonlinearity. To explain this, consider the following: expanding $u\in\Hil$ as $u=\sum_n\hat{u}(n)z_n$ we let $u^k$ denote the restriction to $\C^{2k+1}=\Span_\C\{z_{n}\}_{n=-k}^k\subset\Hil$ given by
\[
u^k:=\sum_{n=-k}^k\hat{u}(n)z_{n}.
\]
The finite-dimensional restriction $F_t^k$ of the nonlinearity is then
\[
F_t^k(u):=F_t(u^k)
\]
and we write $X^{F,k}_t$ for the symplectic gradient of this finite-dimensional restriction. Then the flow $\phi_t^{F,k}$ of the restricted Hamiltonian $F_t^k$ restricts to the finite-dimensional subspace $\C^{2k+1}\subset \Hil$.\\\par

To formalize the idea that we need some minimal regularity for the nonlinearity to make our methods work, we start by introducing Hilbert scales in the sense of \cite{kuksinbook}: working in the complex Hilbert space spanned by $z_n$, $n\in\Z$, where we can identify $J$ with $i$, our separable symplectic Hilbert space $\Hil$ is isometrically isomorphic to $\ell^2(\Z,\C)$ via the complete basis $\{z_n\}_{n\in \Z}$ by 
\[
\Hil\ni u=\sum_{n}\hat{u}(n)z_n\mapsto \left(\hat{u}(n)\right)_{n\in\Z}
\]
where the sum is understood to be over $n\in\Z$. We will write $\Hil_0=\Hil$ and define $\Hil_1$ to be the (dense) subspace of $\Hil_0$ consisting of those $u=\sum_n\hat{u}(n)z_n$ for which $\sum_n\hat{u}(n)nz_n$ is in $\Hil_0$. We define $\ell^{2,1}$ to be the image of $\Hil_1$ under the isomorphism between $\Hil_0$ and $\ell^2$ described above. More generally, we define
\[
\Hil_h:=\left\{u\in\Hil_0\left|\sum_n\hat{u}(n)n^hz_n\in\Hil_0\right.\right\},\quad\ell^{2,h}:=\left\{\left(\hat{u}(n)\right)_{n\in\Z}\in\ell^2\left| \left(\hat{u}(n)n^h\right)_{n\in\Z}\in\ell^2\right.\right\}
\]
for $h\geq0$. Similarly, we define the sequence space $\ell^{2,-h}$ as
\[
\ell^{2,-h}:=\left\{\left(\hat{u}(n)\right)_{n\in\Z}\left|\left(\hat{u}(n)n^{-h}\right)\in\ell^2\right.\right\}
\]
for $h>0$ as the space of possibly diverging sequences and, using the Darboux basis, we identify this with the subspace $\Hil_{-h}$ of the space of tempered distributions. The totality $(\Hil_h)_{h\in\R}$ is also known as a Hilbert scale. We let $\Hil_\infty=\cap\Hil_h$ and $\Hil_{-\infty}=\cup\Hil_h$. Note that $\Hil_h$ is dense and embeds compactly in $\Hil_i$ when $h>i$. \\\par

\begin{definition}
\label{sregularizing}
A map $F_t:\Hil_0=\Hil\to\R$ is called $h$-\textup{regularizing} if it extends to a smooth map
\[
F_t:\Hil_{-h}\to\R,
\]
and it is called $\infty$-regularizing when it is $h$-regularizing for all $h\in\N$. 
\end{definition} 
Note that when $F_t$ is $h$-regularizing, then the differential defines a map
\[
dF_t:\Hil_{-h}\to\left(\Hil_{-h}\right)^*\cong \Hil_h
\]
and so, in particular, for the gradient (with respect to the inner product on $\Hil$) it holds that 
\[
\nabla F_t:\Hil_0\subset\Hil_{-h}\to\Hil_h.
\]
Note that this latter property can also be stated in terms of Kuksin's Hilbert scale theory by saying that $\nabla F_t$ is a scale morphism of the Hilbert scale $(\Hil_k)_k$ order $-h$. \\\par

\begin{lemma}
\label{lemma1}
Assume the nonlinearity is $h$-regularizing with $h>0$ such that the extended map $F_t:\Hil_{-h}\to\R$ has bounded $C^\alpha$-norms for all $\alpha\in\N$. Then $\nabla F^k$ converges to $\nabla F$ uniformly with all derivatives when viewed as maps into $\Hil$.  Furthermore, when expanding $\nabla F_t$ into a Fourier series
\[
\nabla F_t(u)=\sum_{n\in\Z}\widehat{\nabla F_t(u)}(n)z_n
\]
we have 
$\widehat{\nabla F_t(u)}(n)=o(|n|^{-h})$.
%
%
\end{lemma}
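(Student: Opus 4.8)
The plan is to realise the finite-dimensional approximations as conjugates of $\nabla F_t$ by the truncation projections, and then to convert the $C^\alpha$-bounds on $\Hil_{-h}$ into quantitative truncation errors of order $\langle k\rangle^{-h}$ by keeping careful track of where in the Hilbert scale each object lives. Write $P_k\colon\Hil\to\Hil$ for the orthogonal projection onto $\C^{2k+1}=\Span_\C\{z_n\}_{n=-k}^{k}$, so that $u^k=P_ku$ and $F_t^k=F_t\circ P_k$. Since $P_k$ is linear, self-adjoint for the $\Hil_0$-inner product, a norm-contraction on every scale $\Hil_j$, and has image in $\Hil_\infty$, differentiating $F_t^k=F_t\circ P_k$ yields $\nabla F_t^k=P_k\circ(\nabla F_t)\circ P_k$ and, more generally,
\[
d^\alpha(\nabla F_t^k)(u)[v_1,\dots,v_\alpha]=P_k\,d^\alpha(\nabla F_t)(P_ku)[P_kv_1,\dots,P_kv_\alpha].
\]
Two elementary facts are used throughout. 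First, the order $-h$ smoothing turns a truncation into a gain: $\op{(I-P_k)w}_{\Hil_0}\le\langle k\rangle^{-h}\op w_{\Hil_h}$ and $\op{(I-P_k)v}_{\Hil_{-h}}\le\langle k\rangle^{-h}\op v_{\Hil_0}$, both obtained by extracting the factor $\langle n\rangle^{-h}$ from the tail $|n|>k$. Second, since $(\Hil_{-h})^*\cong\Hil_h$ via the pairing extending the $\Hil_0$-inner product, the $C^{\alpha+1}$-bound on $F_t$ says exactly that $d^\alpha(\nabla F_t)$, viewed as a map from $\Hil_{-h}$ into the bounded $\alpha$-linear maps $\Hil_{-h}^{\,\alpha}\to\Hil_h$, is bounded, by $M_{\alpha+1}:=\op{F_t}_{C^{\alpha+1}(\Hil_{-h})}$, and Lipschitz, with constant $M_{\alpha+2}$; in particular $\nabla F_t$ sends all of $\Hil_{-h}$ into one fixed ball of $\Hil_h$.

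With these in hand, fix $R>0$ and estimate on $\op u_{\Hil_0}\le R$, $\op{v_i}_{\Hil_0}\le 1$. Abbreviating $\vec v=(v_1,\dots,v_\alpha)$ and $P_k\vec v=(P_kv_1,\dots,P_kv_\alpha)$, write
\[
d^\alpha(\nabla F_t^k)(u)[\vec v]-d^\alpha(\nabla F_t)(u)[\vec v]=(P_k-I)\,d^\alpha(\nabla F_t)(P_ku)[P_k\vec v]+\Big(d^\alpha(\nabla F_t)(P_ku)[P_k\vec v]-d^\alpha(\nabla F_t)(u)[\vec v]\Big).
\]
The first term is at most $M_{\alpha+1}\langle k\rangle^{-h}$ by the first fact together with $\op{d^\alpha(\nabla F_t)(P_ku)[P_k\vec v]}_{\Hil_h}\le M_{\alpha+1}$. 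In the second term I telescope: replacing the arguments $P_kv_i$ by $v_i$ one slot at a time costs, per slot, at most $M_{\alpha+1}\op{(I-P_k)v_i}_{\Hil_{-h}}\le M_{\alpha+1}\langle k\rangle^{-h}$ (all remaining arguments having $\Hil_{-h}$-norm $\le1$), and finally moving the base point from $P_ku$ to $u$ costs, by the Lipschitz bound, at most $M_{\alpha+2}\op{(I-P_k)u}_{\Hil_{-h}}\le M_{\alpha+2}\langle k\rangle^{-h}R$. Summing the $\alpha+2$ contributions and bounding the $\Hil_0$-norm by the $\Hil_h$-norm everywhere gives $\op{d^\alpha(\nabla F_t^k)(u)-d^\alpha(\nabla F_t)(u)}\le C_\alpha(1+R)\langle k\rangle^{-h}\to 0$ as $k\to\infty$, uniformly for $\op u_{\Hil_0}\le R$ and for every $\alpha$; the case $\alpha=0$ is the $C^0$-statement. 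This proves the first assertion, the uniformity being on bounded subsets of $\Hil$.

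The Fourier-decay statement is then immediate: since $F_t$ is $h$-regularizing, for every $u\in\Hil_0\subset\Hil_{-h}$ we have $dF_t(u)\in(\Hil_{-h})^*\cong\Hil_h$, i.e. $\nabla F_t(u)\in\Hil_h$, which by definition of $\Hil_h$ means $\sum_{n\in\Z}|\widehat{\nabla F_t(u)}(n)|^2\langle n\rangle^{2h}<\infty$; the general term of a convergent series tends to $0$, so $|\widehat{\nabla F_t(u)}(n)|\langle n\rangle^{h}\to 0$, that is $\widehat{\nabla F_t(u)}(n)=o(|n|^{-h})$ as $|n|\to\infty$.

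There is no deep obstacle here; the crux is the bookkeeping. One must measure the images of $\nabla F_t$ and its derivatives in the \emph{stronger} norm $\Hil_h$ while measuring their arguments in the \emph{weaker} norm $\Hil_{-h}$, since this is precisely what lets the order $-h$ smoothing manufacture the decay rate $\langle k\rangle^{-h}$ (and, correspondingly, $|n|^{-h}$); the base-point term is also the reason the convergence is stated uniformly only on bounded sets rather than on all of $\Hil$.
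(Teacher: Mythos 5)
Your proof is correct and rests on the same two pillars as the paper's own argument: the bounded $C^\alpha$-norms of $F_t$ on $\Hil_{-h}$ give uniform bounds (and Lipschitz control) on $\nabla F_t$ and its higher derivatives viewed as maps into $\Hil_h$, and the membership $\nabla F_t(u)\in\Hil_h$ gives the decay $\widehat{\nabla F_t(u)}(n)=o(|n|^{-h})$ exactly as in the paper's last step. Where you differ is in how the convergence $\nabla F^k\to\nabla F$ is extracted: the paper only invokes the compact embedding $\Hil_h\subset\Hil$ to make the tail projection uniformly small on the bounded image of $\nabla F_t$ in $\Hil_h$, whereas you write $\nabla F^k_t=P_k\circ\nabla F_t\circ P_k$ explicitly and, besides the tail term $(\Id-P_k)\nabla F_t(P_ku)$, also isolate the base-point/argument shift from $P_ku$ to $u$, controlled by the Lipschitz bound (using the $C^{\alpha+2}$-norm) together with $\|(\Id-P_k)u\|_{-h}\le \langle k\rangle^{-h}\|u\|_{0}$; this buys you an explicit rate $\langle k\rangle^{-h}$, at the price of uniformity only on bounded subsets of $\Hil$. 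That restriction is not a defect of your argument: the base-point term is genuinely not uniformly small over all of $\Hil$ (the paper's proof passes over it, and its "uniformly in $u\in\Hil$" should be read as uniformity on bounded sets), and bounded-set uniformity --- or even just the uniform bounds on $\|\nabla F^k\|_{C^\alpha}$ that your computation also delivers --- is all that is used later, since the Floer curves are confined to balls by the bounded-support condition and the maximum principle.
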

\begin{proof}
For the first statement note first that the boundedness of the $C^1$-norm of $F_t:\Hil_{-h}\to\R$ implies that the $C^0$-norm of  $\nabla F_t:\Hil(\subset\Hil_{-h})\to\Hil_h$ is bounded which, together with the compact embedding $\Hil_h\subset\Hil$, yields that $\nabla F^k(u)\to\nabla F(u)$ with respect to the $\Hil$-norm uniformly in $u\in\Hil$ as $k\to\infty$. Going beyond, note that the boundedness of the $C^{\alpha}$-norm of $F_t:\Hil_{-h}\to\R$ yields a uniform bound for the higher derivatives $\nabla^\alpha F_t(u)\in\Hil_h^{\otimes\alpha}$ for all $u\in\Hil$ which again implies that $\nabla^{\alpha}F^k_t(u)\to\nabla^{\alpha} F_t(u)$ with respect to the $\Hil^{\otimes\alpha}$-norm uniformly in $u\in\Hil$ as $k\to\infty$. Because $F_t$ satisfies the regularity assumption $\nabla F_t:\Hil\to\Hil_h$, the coefficients in the expansion of $\nabla F_t(u)$ satisfy $|n|^h\widehat{\nabla F_t(u)}(n)=o(1)$ and hence $\widehat{\nabla F_t(u)}(n)=o(|n|^{-h})$. \end{proof}


In order to be able to use the results from Floer and symplectic homology for open sets in finite dimensions as in \cite{fh1}, \cite{oanceasurvey}, \cite{wendl}, we need a sequence of finite-dimensional Hamiltonians which converges in the proper sense to our infinite-dimensional one as above. 
To ensure that such an approximating sequence exists and that our methods work, we impose the following restrictions on the nonlinearity.
\begin{definition}
\label{admissible}
A nonlinearity $F_t:\Hil\to\R$ 
is called \textup{$A$-admissible} if $A$ is admissible and $F_t:\Hil\to\R$ satisfies the following conditions:
\begin{enumerate}
\item $F_t$ is $T$-periodic with $(T,X)$ admissible. 
\item The nonlinearity is $h$-regularizing with $h> dr$. Here $r$ is the irrationality measure of $aT/2\pi$ and $d\geq 1$ the order of the differential operator $A$. 
\item The extended map $F_t:\Hil_{-h}\to\R$ has bounded $C^\alpha$-norms for all $\alpha$.



\item $F_t$ has bounded support, in the sense that for every $k\in\N$ there exists $R_k>0$ such that $F_t(u)=0$ for all $u\in\Hil$ with $|u^k|>R_k$.
\end{enumerate}
$F_t$ is called \emph{weakly $A$-admissible} when there exists $t$-dependent $c_t=c_{t+T}\in\Hil_h$ such that $u\mapsto F_t(u)-\langle c_t,u\rangle$ satisfies 1., 2., and 3. 
\end{definition}
%
We stress that the notion of (weakly) $A$-admissibility depends on the operator $A$ because the irrationality measure of the number $aT/2\pi$ associated to the eigenvalues of $A$, as well as the order of the PDE, dictate what regularity we need for the nonlinearity. Observe that the Diophantineness condition is generic in the sense that Diophantine numbers have full Lebesgue measure. Note, though, that the Diophantineness condition should rather be thought of as a condition on the time period, rather than on the eigenvalues of $A$: we start with a Hamiltonian PDE and this condition restricts what time periods the solutions can have. \\\par 

In order to explain the relation between $A$-admissible and weakly $A$-admissible nonlinearities, we prove the following  
\begin{proposition}
\label{cutoffprop}
Let $\widetilde{F}_t$ be a weakly $A$-admissible nonlinearity. Then 
\[
F_t(u):=\chi(|u|_{-h}^2)\widetilde{F}_t(u)
\]
with $h$ as in \Cref{admissible} condition 2, and where $\chi$ a smooth cut-off function with $\supp(\chi)\subseteq[0,R]$ for some $R>0$, is $A$-admissible.
\end{proposition}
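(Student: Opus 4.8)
The plan is to check the four conditions of \Cref{admissible} for $F_t$ one at a time, exploiting the splitting furnished by weak $A$-admissibility of $\widetilde F_t$: there is $c_t=c_{t+T}\in\Hil_h$, with $h>dr$ the exponent of condition~2, such that $G_t(u):=\widetilde F_t(u)-\langle c_t,u\rangle$ satisfies conditions~1--3 of \Cref{admissible}. Write $q(u):=|u|_{-h}^2$ for the squared norm of $\Hil_{-h}$ and $L_t(u):=\langle c_t,u\rangle$, so that $F_t=(\chi\circ q)\cdot(G_t+L_t)$. The fact that does the real work is the following: $\chi\circ q$ is a smooth map $\Hil_{-h}\to\R$, supported in the bounded set $\{q\le R\}$, with bounded $C^\alpha$-norm for every $\alpha$. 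Indeed, $q$ is the square of the Hilbert norm on $\Hil_{-h}$, hence real-analytic with $Dq(u)=2\langle u,\cdot\rangle_{-h}$, $D^2q\equiv 2\langle\cdot,\cdot\rangle_{-h}$, and $D^jq\equiv 0$ for $j\ge 3$; composing with the smooth compactly supported $\chi$ and applying the Fa\`a di Bruno formula, each $D^j(\chi\circ q)(u)$ is a finite sum of terms $\chi^{(m)}(q(u))$ times a fixed multilinear expression in $Dq(u)$ and $D^2q(u)$, and such a term vanishes unless $q(u)\in\supp\chi^{(m)}\subseteq\supp\chi\subseteq[0,R]$, i.e. unless $|u|_{-h}\le\sqrt R$ — on which set $\|Dq(u)\|\le 2\sqrt R$ and $\|D^2q\|=2$. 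Hence all derivatives of $\chi\circ q$ are bounded on $\Hil_{-h}$.

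Conditions~1 and~2 are then immediate. Since $G_{t+T}=G_t$, $c_{t+T}=c_t$, and $\chi$ is independent of $t$, we have $F_{t+T}=F_t$, and smoothness in $t$ together with admissibility of $(T,X)$ are inherited from $G_t$; this is condition~1. For condition~2, $\widetilde F_t=G_t+L_t$ extends to a smooth map $\Hil_{-h}\to\R$ ($G_t$ by its own condition~2, $L_t$ because $\langle c_t,\cdot\rangle$ is a bounded linear functional on $\Hil_{-h}$ via the duality $(\Hil_{-h})^*\cong\Hil_h$), and $\chi\circ q$ is smooth on $\Hil_{-h}$ by the previous paragraph; a product of smooth maps between Banach spaces is smooth, so $F_t$ is $h$-regularizing with the same $h>dr$.

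Condition~3 is where the cut-off is essential. Decompose $F_t=(\chi\circ q)\,G_t+(\chi\circ q)\,L_t$. The first summand is a product of two scalar maps with bounded $C^\alpha$-norms on $\Hil_{-h}$ — $\chi\circ q$ by the auxiliary fact, $G_t$ by its condition~3 — hence has bounded $C^\alpha$-norm by the Leibniz rule. For the second summand, $L_t$ is linear, so $DL_t\equiv c_t$ (of norm $\|c_t\|_h$) and $D^jL_t\equiv 0$ for $j\ge 2$; expanding $D^\alpha\big((\chi\circ q)\,L_t\big)$ by Leibniz, only $D^\alpha(\chi\circ q)\cdot L_t$ and $\alpha\,D^{\alpha-1}(\chi\circ q)\cdot c_t$ survive. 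In the former, $D^\alpha(\chi\circ q)(u)=0$ unless $|u|_{-h}\le\sqrt R$, where $|L_t(u)|\le\|c_t\|_h\sqrt R$; in the latter, $D^{\alpha-1}(\chi\circ q)$ is bounded and $\sup_{t\in[0,T]}\|c_t\|_h<\infty$ by continuity in $t$. Thus $(\chi\circ q)\,L_t$, and therefore $F_t$, has bounded $C^\alpha$-norm for all $\alpha$, which is condition~3.

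Finally, condition~4 follows from a weight comparison on $\C^{2k+1}=\Span_\C\{z_n\}_{n=-k}^{k}$. Writing $|u|_{-h}^2=\sum_n c_n|\hat u(n)|^2$ with weights $c_n>0$ bounded below by a positive constant on each set $\{|n|\le k\}$, put $c_k:=\min_{|n|\le k}c_n$; then $|u|_{-h}^2\ge\sum_{|n|\le k}c_n|\hat u(n)|^2\ge c_k|u^k|^2$. Hence $|u^k|>R_k:=\sqrt{R/c_k}$ forces $q(u)>R$, so $\chi(q(u))=0$ and $F_t(u)=0$, which is condition~4. The main obstacle in the whole argument is the $C^\alpha$-bound of condition~3: one must track through the Leibniz expansion that the unbounded linear term $\langle c_t,u\rangle$ only ever appears multiplied by a factor supported on the bounded set $\{|u|_{-h}\le\sqrt R\}$, and, relatedly, that composing the unbounded quadratic $q$ with the compactly supported $\chi$ genuinely tames every derivative; the remaining conditions are inheritance from $G_t$ together with elementary estimates.
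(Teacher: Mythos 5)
Your proof is correct and follows essentially the same route as the paper's: condition 1 is immediate, conditions 2 and 3 come down to the observation that the cut-off confines everything to $\{|u|_{-h}\le\sqrt R\}$, where the linear term is controlled by $\langle c_t,u\rangle\le |c_t|_h|u|_{-h}$, and condition 4 is the same weight-comparison giving $R_k\sim k^h\sqrt R$. You merely spell out the Fa\`a di Bruno/Leibniz bookkeeping that the paper leaves implicit.
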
 
\begin{proof}
The first condition is immediate. In order to see that $F_t$ satisfies conditions $2$ and $3$, note that for every $c_t\in\Hil_h$ the map $u\mapsto\chi(|u|_{-h}^2)\langle c_t,u\rangle$ satisfies conditions $2$ and $3$ as $\langle c_t,u\rangle\leq |c_t|_h |u|_{-h}$. To establish condition $4$, let $R_k:=k^hR^{1/2}$ so that when the finite-dimensional restriction $u^k$ of $u$ satisfies $|u^k|_0>R_k$, then
\[
|u|^2_{-h}\geq|u^k|^2_{-h}=\sum_{n=0}^k\left|\hat{u}(\pm n)\right|^2n^{-2h}>k^{-2h}\sum_{n=0}^k\left|\hat{u}(\pm n)\right|^2>R
\]
so that $F_t(u)=0$.
\end{proof}

By the above proposition it hence suffices to find examples of weakly $A$-admissible nonlinearities. 

\begin{example} The nonlinearities from \Cref{nlw2} and from \Cref{nls1} are weakly A-admissible as long as $(X,T)$ is admissible and $h> dr$: when $u,\varphi\in\Hil_{-h}$ and $\psi\in C^h$, then $u*\psi,\varphi*\psi\in C^0$ for both examples. Together with the smoothness of $\widetilde{f}_t$ and $g_t$ it follows that the maps $x\mapsto\partial_1^{\alpha}\widetilde{f}_t((u*\psi)(x),x)$, $x\mapsto\partial_1^{\alpha}g_t((\varphi*\psi)(x),x)$ are continuous and hence (square-) integrable over $\R/X\Z$ for all $\alpha\in\N$. Altogether this is sufficient to prove that $F_t$ is smooth as a map from $\Hil_{-h}$ to $\R$ in both examples. Since $\widetilde{f}_t$ and $g_t$ have uniformly bounded derivatives, the maps $x\mapsto\partial_1^{\alpha}\widetilde{f}_t((u*\psi)(x),x)$ and $x\mapsto\partial_1^{\alpha}g_t((\varphi*\psi)(x),x)$ are uniformly bounded with respect to $u,\varphi$. But this implies that $\nabla^\alpha F_t$ is uniformly bounded for $\alpha=1,2,\ldots$; for $\alpha=0$ in \Cref{nlw2} only after subtracting a linear term as allowed in \Cref{admissible}. \end{example}


In the \Cref{mainthm} stated in \Cref{mainthmsection} we prove the existence of a Floer curve together with the existence of a periodic solution only for a Hamiltonian with $A$-admissible nonlinearity: in order to employ the maximum principle for proving compactness of the relevant moduli space of pseudoholomorphic curves, we do have to make the technical assumption concerning the support of the nonlinearity. In \Cref{subqsection}, however, we prove that the existence of a forced time periodic solution is still guaranteed when the nonlinearity is weakly $A$-admissible instead of $A$-admissible. 

\section{Main theorem}
\label{mainthmsection}
Before stating the main theorem, let us rewrite the setting a little: when the Hamiltonian $H_t$ is a sum of two terms $H_A$ and $F_t$, then the flows of $H_t$ and of $H_A$ and $F_t$ are related via
\begin{align*}
\phi^{H_t}=\phi^{H_A+F_t}=\phi^{H_A\#G_t}=\phi^A\circ\phi^{G_t}
\end{align*}
where $G_t:=F_t\circ\phi^A_t$ and where $(H_A\#f)_t:=H_A+f_t\circ\phi^A_{-t}$ for any function $f_t$. We will work with $\phi_t^A$ and $G_t$ rather than with $H_t=H_A+F_t$ because $H_A$ (and hence $H_t$) is only densely defined, whereas the flow $\phi_t^A$ is a symplectomorphism which is defined on the whole of $\Hil$. Also $G_t$ turns out to have sufficiently nice properties, see \Cref{lemma4} where we show that even though $\phi_t^A$ is only differentiable on dense subspaces, $G_t$ is at least four times continuously differentiable in $t$.\par
Going back, we see that $T$-periodic solutions of \eqref{hamde} are in one-to-one correspondence with $u$ satisfying
\begin{align}
\label{phiform}
\dot{u}=X_t^G(u),\qquad u(t+T)=\phi_{-T}^A(u(t)).
\end{align}
We call such solutions $\phi_T^A$-periodic. We will prove existence of $\phi_T^A$-periodic solutions, which by the above correspondence implies existence of a true $T$-periodic solution. From now on we will use $G_t$ as in \eqref{phiform} instead of $F_t$ and say that $G_t$ is $A$-admissible when $F_t$ is. Note that the norms of $F_t$ and $G_t$ coincide for fixed $t$ because the free flow is unitary. Recall that when $G_t\equiv0$, the only solution to the PDE is $u\equiv0$. \\\par

Let $\varphi\in C^\infty(\R)$ be a cut-off function specified by
\[
\varphi(s)=0\;\mathrm{for}\;s\leq-1;\qquad\varphi(s)=1\;\mathrm{for}\;s\geq0;\qquad0\leq\varphi'(s)\leq2.
\]
\begin{maintheorem}
\label{mainthm}
\sloppy For a Hamiltonian PDE with $A$-admissible nonlinearity $G_t$ there exists a {$(\floor{h/d}-1)$-times} differentiable map $\widetilde{u}:\R\times \R\to\Hil_{h-d(r-1)-1/2}\subset\Hil$ for $h> dr$, called a \emph{Floer curve}, which satisfies the Floer equation and $\phi_T^A$-periodicity condition
\begin{align}
\label{floereqn}
\overline{\partial}\widetilde{u}+\varphi(s)\nabla G_t(\widetilde{u})=0,\qquad\widetilde{u}(s,t+T)=\phi_{-T}^A\widetilde{u}(s,t)
\end{align}
where $\overline{\partial}=\partial_s+i\partial_t$. The Floer curve $\widetilde{u}$ connects $u_0\equiv 0$ with a (weak) solution $u_1(t)$ of the nonlinear Hamiltonian PDE \eqref{phiform} and hence of \eqref{hamde}, in the sense that there exists sequences $s_n^\pm\in\R$ with $s_n^\pm\to\pm\infty$ as $n\to\infty$ such that
\begin{align*}
\lim_{n\to\infty}\widetilde{u}(s_n^-,t)=0,\qquad\lim_{n\to\infty}\widetilde{u}(s_n^+,t)=u_1(t)
\end{align*}
in the $C^{\floor{h/d}-1}$-sense. In particular, when the nonlinearity is $\infty$-regularizing, then both the Floer curve and the periodic orbit are smooth in all variables $s$, $t$ and $x$. 
\end{maintheorem}
Note that we call $u_1$ a weak solution, since $h-d(r-1)-\frac{1}{2}>d-\frac{1}{2}$ might not be large enough to guarantee that $u_1$ is also a solution in the classical sense. Here and after we continue to identify $\Hil$ with a subspace of the complexified Hilbert space spanned by $z_n$ for $n\in\Z$, and write $i$ instead of $J$. 
We emphasize that we are using the setup of Floer homology for general symplectomorphisms from \cite{salamondostoglou} because even though the Hamiltonian $H_A$ is only densely defined, its flow $\phi_t^A$ is an everywhere defined symplectomorphism. To use this setup, we use the fact that $(\phi_{-T}^A)_*i=i$. \\\par
To go back from $G_t$ to $F_t$ and obtain a true $T$-periodic Floer curve for the Hamiltonian $H_t=H_A+F_t$, we define $\doubletilde{u}(s,t):=\phi^A_t\wt{u}(s,t)$ for $(s,t)\in\R\times\R$. It immediately follows that \eqref{floereqn} is equivalent to
\begin{align}
\label{floereqn2}
\overline{\partial}\doubletilde{u}+A\doubletilde{u}+\varphi(s)\nabla F_t(\doubletilde{u})=0,\qquad\doubletilde{u}(s,t+T)=\doubletilde{u}(s,t).
\end{align}
Note that the flow $\phi^A_t$ preserves Hilbert scales so that a solution to \eqref{floereqn} indeed gives us a solution to \eqref{floereqn2} of the same regularity. Note as well that the asymptotics $\lim_{s\to\pm\infty}\doubletilde{u}(s,t)$ of the solution $\doubletilde{u}$ to \eqref{floereqn2} are $T$-periodic solutions to \eqref{hamde}. \\\par

A result similar to our main theorem is proven in \cite{fabertnlse} for the nonlinear Schrödinger equation on projective Hilbert space (see also \Cref{nls1}). Because of the extra topology on projective Hilbert space, the author can prove the existence of infinitely many solutions rather than just one. \emph{We stress that our paper is self-contained, as in contrast to \cite{fabertnlse} we study the case of general Hamiltonian PDEs}.\\\par
Let $F_t$ be any $A$-admissible nonlinearity with finite-dimensional restrictions $F^k_t:\C^{2k+1}\to\R$ given by $F^k_t(u):=F_t(u^k)$ with $u^k$ denoting the projection of $u\in\Hil$ onto the finite-dimensional subspace $\C^{2k+1}$. In analogy, for $G_t:=F_t\circ\phi_t^A$ let $G_t^k$ be its finite-dimensional restriction given by $G^k_t:=F^k_t\circ\phi_t^A$ with symplectic gradient $X_t^{G,k}$. In order to prove the main theorem for the infinite-dimensional nonlinearity $F_t$, we show that, after passing to a subsequence, finite-dimensional Floer curves $\widetilde{u}^k$ for the restricted nonlinearity $F_t^k$ converge as $k\to\infty$ to a Floer curve on the infinite-dimensional Hilbert space, as in the main theorem. This can be done because even though the time-$T$ free flow map is asymptotically degenerate, as our assumptions on the nonlinearity assure that this is no problem. \\\par 

Here $\widetilde{u}^k$ satisfies the Floer equation 
\begin{align*}
\overline{\partial}\widetilde{u}^k+\varphi_k(s)\nabla G_t(\widetilde{u}^k)=0,\qquad\widetilde{u}^k(s,t+T)=\phi_{-T}^A\widetilde{u}^k(s,t)
\end{align*}
with $\varphi_k(s)$ for $k\geq1$ meeting the requirements 
\[
\varphi_k(s)=0\;\mathrm{for}\;s\leq-1\;\mathrm{and}\;s\geq2k+1&;\qquad&&\varphi_k(s)=1\;\mathrm{for}\;s\in[0,2k];\\
0\leq\varphi'_k(s)\leq2\;\mathrm{for}\;s<0&;&&-2\leq\varphi'_k(s)\leq0\;\mathrm{for}\;s>0,
\]
such that $\varphi_k(s)\to\varphi(s)$ as $k\to\infty$ for every $s\in\R$. Furthermore, we impose the asymptotic condition $\lim_{s\to\pm\infty} \widetilde{u}^k(s,t)=0$.\\\par

This said, the main ingredient for the proof of \Cref{mainthm} is the following infinite-dimensional generalization of the Gromov-Floer compactness theorem, see \Cref{finallemma}. 

\begin{theorem}
There is a subsequence of the sequence $(\widetilde{u}^k)_k$ of Floer curves $\widetilde{u}^k:\R\times\R\to\C^{2k+1}$ which $C^{\floor{h/d}-1}_{\mathrm{loc}}$-converges to a solution $\widetilde{u}:\R\times\R\to\Hil$ of the Floer equation 
\[
(\partial_s+i\partial_t)\widetilde{u}+\varphi(s)\nabla G_t(\widetilde{u})=0,\qquad \wt{u}(s,t+T)=\phi^A_{-T}\wt{u}(s,t)
\]
as in \Cref{mainthm}.
\end{theorem}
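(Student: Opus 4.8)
The plan is to run the classical Gromov-Floer compactness scheme --- uniform energy bound, bubbling-off, elliptic bootstrap, Arzelà-Ascoli --- but with every estimate made uniform in the approximation parameter $k$, the genuinely new ingredient being a mode-by-mode control of the Floer curves that uses the Diophantineness of $aT/2\pi$ to beat the accumulation of the eigenvalues of $\phi_T^A$ at $1$.

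First I would assemble what is already available. By the finite-dimensional results recalled in \Cref{fdsection} the curves $\widetilde{u}^k:\R\times\R\to\C^{2k+1}$ exist, and since $\phi_t^A$ is unitary the $C^\alpha$-norms of $G_t^k$ equal those of $F_t^k$, which are bounded uniformly in $k$ by \Cref{admissible}(3) and \Cref{lemma1}; hence the energies of the $\widetilde{u}^k$ are uniformly bounded. Condition \Cref{admissible}(4) then yields, via the maximum principle (this is where bounded support enters), a uniform $C^0$-confinement of each $\widetilde{u}^k$ to a fixed bounded region. Combining the uniform energy and $C^0$ bounds, the bubbling-off analysis of \Cref{bubblingoffsection} applies verbatim and uniformly in $k$ --- a non-constant bubble would be a non-constant finite-energy pseudoholomorphic sphere in a linear symplectic space, which cannot exist as $\omega$ is exact --- so the first derivatives of $\widetilde{u}^k$ are bounded uniformly on compacta, and the elliptic regularity argument of the same section upgrades this to uniform $\cloc$-bounds for each fixed $m$ for which the corresponding norms of $G_t$ exist.

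The heart of the matter, carried out in \Cref{sdp}, is the uniform control of the high Fourier modes. Writing $v_n^k(s,t):=\widehat{\widetilde{u}^k(s,t)}(n)$, the Floer equation together with the $\phi_T^A$-periodicity condition decouples mode-wise into $\overline{\partial}v_n^k=-\varphi_k(s)\,\widehat{\nabla G_t(\widetilde{u}^k)}(n)$ with twisted periodicity $v_n^k(s,t+T)=e^{-iaTn^d}v_n^k(s,t)$. An elliptic estimate for $\overline{\partial}$ with such twisted-periodic boundary data bounds $v_n^k$, and its $s,t$-derivatives, on compact sets by $|e^{-iaTn^d}-1|^{-1}$ times a Sobolev norm of the inhomogeneity. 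By \Cref{lemma1} the inhomogeneity is $o(|n|^{-h})$, uniformly in $k$ and in $(s,t)$, while \Cref{admissibleXT}, i.e.\ the Diophantineness of $aT/2\pi$ with irrationality measure $r$, gives $|e^{-iaTn^d}-1|\gtrsim\|aTn^d/2\pi\|\gtrsim|n|^{-d(r-1)-\epsilon}$ for every $\epsilon>0$ (the relevant denominator being $q=n^d$). Hence $|v_n^k(s,t)|\lesssim|n|^{d(r-1)+\epsilon}\,o(|n|^{-h})$ on compacta, uniformly in $k$, and summing the squares with weight $|n|^{2s_0}$ stays finite for $s_0\leq h-d(r-1)-\tfrac12$ --- exactly the Hilbert-scale exponent of \Cref{mainthm}, the borderline case being absorbed by the strict decay $o(|n|^{-h})$. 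Iterating the estimate in the $s,t$-derivatives, and keeping track of the fact that each $t$-derivative of $G_t=F_t\circ\phi_t^A$ costs one factor of the order-$d$ operator $A$ whereas $\nabla F_t$ gains $h$ orders (the mechanism behind \Cref{lemma4}), produces uniform bounds for $(\widetilde{u}^k)_k$ in $C^{\floor{h/d}-1}_{\mathrm{loc}}\bigl(\R\times\R,\Hil_{h-d(r-1)-1/2}\bigr)$.

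Finally I would conclude by a diagonal Arzelà-Ascoli argument: these uniform bounds into $\Hil_{h-d(r-1)-1/2}$, together with the compact embedding $\Hil_{h-d(r-1)-1/2}\hookrightarrow\Hil$, extract a subsequence converging in $C^{\floor{h/d}-1}_{\mathrm{loc}}(\R\times\R,\Hil)$ to some $\widetilde{u}$. Since $\varphi_k\to\varphi$ pointwise and $\nabla G^k\to\nabla G$ uniformly with all derivatives (\Cref{lemma1}), the limit solves $\overline{\partial}\widetilde{u}+\varphi(s)\nabla G_t(\widetilde{u})=0$, and the twisted periodicity passes to the limit mode-by-mode, giving $\widetilde{u}(s,t+T)=\phi_{-T}^A\widetilde{u}(s,t)$; lower semicontinuity of the weighted norms keeps $\widetilde{u}$ valued in $\Hil_{h-d(r-1)-1/2}$, as in \Cref{mainthm}. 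The main obstacle is precisely the mode-by-mode step: making the elliptic constant of the twisted $\overline{\partial}$-problem explicit in the twist $e^{-iaTn^d}$ and balancing its blow-up, as $n^d$ nearly solves $aTn^d\in2\pi\Z$, against the regularizing decay of $\nabla G_t$. This is the small divisor problem, and it is exactly why both the Diophantine condition on $(T,X)$ and the quantitative regularizing hypothesis $h>dr$ are needed and, as \Cref{counterexample} shows, cannot be dropped.
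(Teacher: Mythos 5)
Your proposal is correct in substance and follows the same overall strategy as the paper's proof of \Cref{finallemma}: uniform energy, $C^0$ and derivative bounds from \Cref{fdsection} and \Cref{bubblingoffsection}, a Fourier-mode small-divisor estimate balancing the Diophantine lower bound $\inf_p\left|2\pi p/T-an^d\right|\gtrsim |n|^{-d(r-1)}$ against the $o(|n|^{-h})$ decay of $\nabla G^k$ (you land on exactly the exponent $h-d(r-1)-\tfrac12$), and extraction of a $\cloc$-limit with $m=\floor{h/d}$. Two steps are packaged differently. For the small-divisor step you keep only the $z_n$-decomposition and invoke an ``elliptic estimate on compact sets'' for the twisted $\overline{\partial}$-problem with constant $|e^{-iaTn^d}-1|^{-1}$; the paper instead expands also in $t$ (frequencies $2\pi p/T-an^d$), so that each coefficient solves the scalar ODE of \Cref{elementarylemma}, whose bound $\op{w}_{C^0}\leq\sqrt{2}\op{f}_{C^0}/|\lambda|$ is the quantitative input behind \Cref{normcompprop2}. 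Note that the constant $|e^{-iaTn^d}-1|^{-1}$ cannot come from a purely local elliptic estimate (the homogeneous solutions $e^{\lambda_{p,n}s}$ are unconstrained on compact sets); it is a global statement requiring both the twisted $t$-periodicity and the asymptotics $\wt{u}^k(s,\cdot)\to0$ as $s\to\pm\infty$, which is precisely what \Cref{elementarylemma} exploits. So your estimate is true, but making it rigorous essentially amounts to the paper's double Fourier expansion (or an equivalent spectral-gap argument on the twisted cylinder). For the limit extraction you use Arzel\`a--Ascoli plus the compact embedding $\Hil_{h-d(r-1)-1/2}\hookrightarrow\Hil$, which requires scale-valued bounds on all derivatives up to order $m-1$; the paper's route is lighter: it needs only the tail estimate of \Cref{normcompprop2} (in $C^{m-1}$ with values in $\Hil$) together with finite-dimensional compactness of the projections $\wt{u}^{k,\ell}$ (maximum principle, bootstrapping, Sobolev embedding, diagonal subsequence), and concludes with an $\epsilon/3$ Cauchy argument; the $\Hil_{h-d(r-1)-1/2}$-valuedness is recovered a posteriori in \Cref{regularitythm}. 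Both routes work, and the equation and twisted periodicity pass to the limit in either case by \Cref{lemma1} and \Cref{lemma4}.

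One caveat: your justification of the uniform first-derivative bound (``a non-constant bubble would be a finite-energy holomorphic sphere, impossible since $\omega$ is exact'') is exactly the argument that \Cref{firstderiv} explains is \emph{not} available here, since producing the limit bubble would already require Gromov--Floer compactness with target dimension tending to infinity. The paper instead makes the bubbling analysis quantitative: exactness is used to bound the symplectic area of the rescaled discs by the length of their boundary circles, and the mean-value/a priori estimate then contradicts $|\partial_s\widetilde{v}^k(0)|=1$ without ever forming a sphere. Since you cite \Cref{bubblingoffsection} for this input, the logic of your proof of the compactness theorem itself is intact, but the parenthetical reasoning should be replaced by that quantitative argument.
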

After establishing the existence of a Floer curve, we can directly deduce the existence of a periodic orbit:
\begin{theorem}
Using finiteness of energy, the limit Floer curve $\widetilde{u}:\R\times \R\to\Hil$ satisfies the following asymptotic conditions: there exists sequences $s_n^\pm\in\R$ with $s_n^\pm\to\pm\infty$ as $n\to\infty$ such that
\begin{align*}
\lim_{n\to\infty}\widetilde{u}(s_n^-,t)=u_0=0,\qquad\lim_{n\to\infty}\widetilde{u}(s_n^+,t)=u_1(t),
\end{align*}
in the $C^{\floor{h/d}-1}$-sense. Here $u_0=0$ is the trivial and only fixed point of the free flow and $u_1$ is a $\phi_T^A$-periodic orbit of $G_t$.
\end{theorem}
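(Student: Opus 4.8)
The plan is to run the standard Floer-theoretic argument by which finiteness of energy forces a Floer curve to converge, along suitable sequences $s_n^\pm\to\pm\infty$, to (twisted) periodic orbits at its two ends; the only non-standard point is that compactness of the target is replaced throughout by the uniform estimates of \Cref{bubblingoffsection} and \Cref{sdp}. First I would record that finiteness of the energy yields $\int_{\R}\int_0^T|\partial_s\widetilde{u}(s,t)|^2\,dt\,ds<\infty$. Consequently the function $s\mapsto\int_0^T|\partial_s\widetilde{u}(s,t)|^2\,dt$ is integrable on $\R$, so its $\liminf$ as $s\to+\infty$ and as $s\to-\infty$ both vanish; this produces sequences $s_n^\pm\to\pm\infty$ along which $\int_0^T|\partial_s\widetilde{u}(s_n^\pm,t)|^2\,dt\to0$ and, more importantly, for which the energy of $\widetilde{u}$ over the strips $[s_n^\pm-R,\,s_n^\pm+R]\times[0,T]$ tends to $0$ as $n\to\infty$, for every fixed $R>0$.

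Next I would pass to the shifted curves $\widetilde{u}_n^\pm(s,t):=\widetilde{u}(s+s_n^\pm,t)$, which solve the shifted Floer equation $\overline{\partial}\widetilde{u}_n^\pm+\varphi(s+s_n^\pm)\nabla G_t(\widetilde{u}_n^\pm)=0$ together with the same $\phi_T^A$-periodicity condition $\widetilde{u}_n^\pm(s,t+T)=\phi_{-T}^A\widetilde{u}_n^\pm(s,t)$. The uniform derivative bounds of \Cref{bubblingoffsection} and the small-divisor estimates of \Cref{sdp} --- precisely the ingredients that produced the $C^{\floor{h/d}-1}_{\mathrm{loc}}$-limit $\widetilde{u}$ in the preceding theorem --- are local and invariant under $s$-translation, hence apply verbatim to the family $\widetilde{u}_n^\pm$; after passing to a subsequence I may therefore assume $\widetilde{u}_n^\pm\to v^\pm$ in $C^{\floor{h/d}-1}_{\mathrm{loc}}(\R\times\R,\Hil)$. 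Since $\varphi(s+s_n^-)\to0$ and $\varphi(s+s_n^+)\to1$ uniformly for $s$ in compact sets, letting $n\to\infty$ in the shifted Floer equation gives $\overline{\partial}v^-=0$ and $\overline{\partial}v^++\nabla G_t(v^+)=0$, both still carrying the $\phi_T^A$-twisted periodicity; and the vanishing of the strip energies forces $\partial_s v^\pm\equiv0$, so each $v^\pm$ is independent of $s$ and may be written $v^\pm(s,t)=u_0(t)$, resp.\ $u_1(t)$.

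I would then identify the two limits. Because $\overline{\partial}=\partial_s+i\partial_t$ and $u_0$ does not depend on $s$, we get $i\dot{u}_0=0$, so $u_0$ is constant in $t$; the periodicity condition then reads $u_0=\phi_{-T}^Au_0$, i.e.\ $u_0$ is a fixed point of $\phi_T^A$. As $(T,X)$ is admissible, $aT/2\pi$ is irrational, so by the discussion around \eqref{freehpde} the only fixed point of $\phi_T^A$ is $0$; hence $u_0=0$. Similarly $u_1$ satisfies $i\dot{u}_1+\nabla G_t(u_1)=0$, that is $\dot{u}_1=i\nabla G_t(u_1)=X_t^G(u_1)$, together with $u_1(t+T)=\phi_{-T}^Au_1(t)$, which is exactly condition \eqref{phiform}: $u_1$ is a $\phi_T^A$-periodic orbit of $G_t$. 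Finally, evaluating the $C^{\floor{h/d}-1}_{\mathrm{loc}}$-convergence of $\widetilde{u}_n^\pm$ at $s=0$ gives $\widetilde{u}(s_n^\pm,t)=\widetilde{u}_n^\pm(0,t)\to v^\pm(0,t)$, which equals $0$ and $u_1(t)$ respectively, in the $C^{\floor{h/d}-1}$-sense in $t$, which is the assertion.

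The energy bookkeeping and the sign chasing in the limit equations are routine. The one place that needs care --- and where the work of \Cref{bubblingoffsection} and \Cref{sdp} is essential --- is upgrading the subsequential convergence of the shifted curves to the strong $C^{\floor{h/d}-1}_{\mathrm{loc}}$-topology (rather than merely $C^0_{\mathrm{loc}}$ or some weak topology), and checking that the underlying derivative bounds are genuinely uniform under the $s$-translations so that they descend to the $\widetilde{u}_n^\pm$; granted that, the remainder is a diagonal/Arzel\`a--Ascoli extraction.
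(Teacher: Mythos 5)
Your argument is correct, but it runs along a mechanically different track than the paper's proof of \Cref{lemmax}. The paper never translates the curve: it chooses the sequences $s_\gamma^\pm$ by a mean-value/pigeonhole argument on the finite energy, so that $\int_0^T\bigl|\partial_t\widetilde{u}(s_\gamma^\pm,t)-\varphi(s_\gamma^\pm)X_t^{G}(\widetilde{u}(s_\gamma^\pm,t))\bigr|^2dt\le 4T\|F\|_{C^0}/\gamma\to0$, then applies compactness only to the sequence of \emph{loops} $\widetilde{u}(s_\gamma^\pm,\cdot)$ --- maximum principle for the $\C^{2\ell+1}$-components, \Cref{normcompprop2} for the tail, a diagonal subsequence --- and reads off directly that the limit loops solve the ODE with $\varphi=0$ resp.\ $\varphi=1$, i.e.\ the fixed point $u_0=0$ and a solution $u_1$ of \eqref{phiform}. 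You instead shift the whole curve, re-run the two-dimensional $\cloc$-compactness of \Cref{finallemma} for the translated family, pass to the limit in the PDE, and use vanishing strip energy to force $s$-independence before identifying the limits; this is the classical Floer translation argument and it yields slightly more (honest $s$-independent limit cylinders $v^\pm$), at the price of having to verify that the uniform bounds really are translation-invariant. They are, but note one attribution point: the estimates of \Cref{bootstrapping} and \Cref{normcompprop2} are stated for the finite-dimensional curves $\widetilde{u}^k$, so what you actually use is that the limit curve $\widetilde{u}$ inherits uniform-in-$s$ $C^m$-type bounds and tail decay through the convergence of \Cref{finallemma}, rather than that those sections "apply verbatim" to $\widetilde{u}_n^\pm$. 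Also, your preliminary choice of $s_n^\pm$ with vanishing slice integrals is redundant once you invoke the strip-energy argument (any sequences tending to $\pm\infty$ would do), whereas in the paper the analogous slice selection is precisely what drives the proof; both routes rest on the same ingredients (finite energy, maximum principle, \Cref{bootstrapping}, \Cref{normcompprop2}), and your sign bookkeeping $\dot{u}_1=i\nabla G_t(u_1)=X_t^G(u_1)$ and the identification of $u_0$ via the unique fixed point of $\phi_T^A$ match the paper.
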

We finish by discussing the regularity of the solution.
\begin{theorem}
The Floer curve $\wt{u}$, and in particular the $T$-periodic solution $u(t)=\phi_{t}^Au_1(t)$ we obtain from the $\phi_T^A$-periodic solution $u_1(t)$, is of regularity $h-d(r-1)-\frac{1}{2}$ for any $h> dr$, i.e. $\wt{u}:\R\times\R\to\Hil_{h-d(r-1)-1/2}\subset\Hil$.
\end{theorem}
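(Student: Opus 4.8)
My approach is to run the argument in Fourier coordinates, mode by mode, and then pass to the limit $k\to\infty$ along the approximating Floer curves $\wt u^k$, using the mode estimates of \Cref{sdp}. First I would replace $\wt u$ by $\doubletilde u(s,t):=\phi^A_t\wt u(s,t)$, which by \eqref{floereqn2} is genuinely $T$-periodic in $t$ and solves $\overline\partial\doubletilde u+A\doubletilde u+\varphi(s)\nabla F_t(\doubletilde u)=0$; since $\phi^A_t$ is a scale-preserving isometry it suffices to prove $\doubletilde u(s,t)\in\Hil_{h-d(r-1)-1/2}$, and the $T$-periodic solution $u(t)=\phi^A_tu_1(t)$ will then inherit the regularity of its asymptotic limit $u_1$. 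Writing $\doubletilde u(s,t)=\sum_{p,n\in\Z}c_{p,n}(s)\,e^{2\pi ipt/T}z_n$ and $\varphi(s)\nabla F_t(\doubletilde u)=\sum_{p,n}g_{p,n}(s)\,e^{2\pi ipt/T}z_n$, and using $\overline\partial=\partial_s+i\partial_t$ together with $Az_n=an^dz_n$, the Floer equation decouples into the scalar linear ODEs $\partial_sc_{p,n}+\mu_{p,n}\,c_{p,n}=-g_{p,n}$ with divisor $\mu_{p,n}:=an^d-2\pi p/T$; the same computation applied to $\doubletilde u^k=\phi^A_t\wt u^k$ gives coefficients $c^k_{p,n},g^k_{p,n}$ converging to $c_{p,n},g_{p,n}$ by the $C^{\floor{h/d}-1}_{\mathrm{loc}}$-convergence.

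Next I would bound the divisors from below and the sources from above. Writing $\mu_{p,n}=\tfrac{2\pi n^d}{T}\bigl(\tfrac{aT}{2\pi}-\tfrac{p}{n^d}\bigr)$ and using that $aT/2\pi$ is Diophantine of irrationality measure $r$ by admissibility of $(T,X)$, so that $|aT/2\pi-p/q|\geq c_0q^{-r}$ for all $p/q\in\Q$, one obtains $|\mu_{p,n}|\geq\tfrac{2\pi c_0}{T}\,|n|^{-d(r-1)}$ for every $n\neq0$ and every $p$ (the single mode $(0,0)$ has $\mu=0$ but concerns only $z_0$ and is irrelevant to Hilbert-scale regularity). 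On the source side, \Cref{admissible} condition 3 gives a uniform bound on $\nabla F_t$ as a map $\Hil_{-h}\to\Hil_h$, so by Parseval in $t$ and the $h$-regularizing property (cf.\ \Cref{lemma1}) $|g^k_{p,n}(s)|\leq C|n|^{-h}$ uniformly in $k,p,s$; smoothness of $F_t$ in $t$ together with the $C^{\floor{h/d}-1}$-regularity in $t$ already available additionally yields the frequency decay $\sum_p(1+|p|)^{2(\floor{h/d}-1)}|g^k_{p,n}(s)|^2\leq C'|n|^{-2h}$, and since $h>dr\geq2d$ we have $\floor{h/d}\geq2$, which is exactly what will make the summation over $p$ converge.

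Then I would solve the mode ODEs with constants uniform in $k$: because $\varphi_k$ has compact support and $\wt u^k(s,\cdot)\to0$ as $s\to\pm\infty$, each $c^k_{p,n}$ is the variation-of-constants integral over $(-\infty,s]$ or $[s,+\infty)$ according to the sign of $\mu_{p,n}$, on which the exponential kernel is $\leq1$, so $|c^k_{p,n}(s)|\leq|\mu_{p,n}|^{-1}\sup_\sigma|g^k_{p,n}(\sigma)|$ uniformly in $s$ and $k$ and the $p$-decay transfers to $c^k_{p,n}$. Combining with the two previous bounds and letting $k\to\infty$, $|c_{p,n}(s)|\lesssim|n|^{-(h-d(r-1))}(1+|p|)^{-(\floor{h/d}-1)}$ in the resonant window $|\mu_{p,n}|<1$ (which contains $O(1)$ values of $p$, all of size $\asymp n^d$) and $|c_{p,n}(s)|\lesssim|n|^{-h}(1+|p|)^{-(\floor{h/d}-1)}$ otherwise. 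Summing over the modes, $\int_0^T\|\doubletilde u(s,t)\|_{\Hil_\beta}^2\,dt\asymp\sum_n\langle n\rangle^{2\beta}\sum_p|c_{p,n}(s)|^2$: the non-resonant modes contribute $\lesssim\sum_n\langle n\rangle^{2\beta-2h}=\sum_n\langle n\rangle^{-2d(r-1)-1}<\infty$, and the resonant modes (where $p\asymp n^d$) contribute $\lesssim\sum_n\langle n\rangle^{2\beta-2(h-d(r-1))-2d(\floor{h/d}-1)}=\sum_n\langle n\rangle^{-1-2d(\floor{h/d}-1)}<\infty$, the latter being finite precisely because $\floor{h/d}\geq2$; hence $\beta=h-d(r-1)-\tfrac12$ is the exponent that is forced. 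A further $t$-derivative of the Floer equation, treated by the same mode analysis (equivalently, Sobolev embedding in $t$), upgrades $L^2_t(\Hil_\beta)$ to the pointwise statement $\doubletilde u(s,t)\in\Hil_\beta$; applying the scale-preserving isometry $\phi^A_t$ gives $\wt u(s,t)\in\Hil_\beta$, and since all bounds are uniform in $s$, letting $s=s_n^\pm\to\pm\infty$ and using weak lower semicontinuity of $\|\cdot\|_{\Hil_\beta}$ yields $u_1(t)\in\Hil_\beta$ and hence $u(t)=\phi^A_tu_1(t)\in\Hil_\beta$.

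The step I expect to be the main obstacle is balancing the three competing effects in the last paragraph — the gain of $h$ derivatives from the regularizing nonlinearity, the loss of $d(r-1)$ derivatives from the worst small divisors through the Diophantine estimate, and the $\tfrac12$ lost to $\ell^2$-summability — while keeping every constant uniform in $k$ so that the limit survives, and simultaneously using the (only mildly better than $L^2$) regularity in $t$ just well enough to make the $p$-summation converge without degrading the exponent. This bookkeeping, performed at the level of the finite-dimensional curves, is precisely the content of \Cref{sdp}, so in practice the proof of the theorem amounts to assembling those estimates and running the limiting argument sketched above.
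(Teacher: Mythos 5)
Your proposal is correct and takes essentially the same route as the paper: the paper's proof of this theorem just combines the mode-by-mode small-divisor estimate already established in \Cref{sdp} (the decay \eqref{decayrateblah} of \Cref{normcompprop2}, obtained from the Diophantine bound \eqref{sdpbound1}, \Cref{elementarylemma} and the source decay \eqref{equationx}) with the weighted summation over $(p,n)$ that you perform, your variation-of-constants bound playing the role of \Cref{elementarylemma} and your $\doubletilde{u}=\phi^A_t\wt{u}$ picture being equivalent to the paper's expansion in the $\phi^A_T$-periodic eigenbasis. Your resonant/non-resonant split is a mild sharpening the paper does not use (it applies the worst-case divisor loss $|n|^{-d(r-1)}$ to all modes), so the exponent $h-d(r-1)-\tfrac{1}{2}$ is not actually ``forced'' by your estimates, but it certainly follows from them, and your final pointwise-in-$t$ upgrade should be done, as you indicate, with the full order-$m$ $t$-derivative bound (available from \Cref{bootstrapping}), since with only $m-1$ powers of $p$ the $p$-summation is borderline when $m=2$.
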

In particular, when $h=\infty$ we obtain a smooth solution to the Floer equation and associated Hamiltonian PDE.\\\par
These results will play an important role for the construction of a symplectic cohomology theory for Hamiltonian PDEs with regularizing nonlinearities, which is an ongoing project of the authors. In an upcoming paper the authors will prove a Lagrangian version of the results above.

\section{Finite-dimensional case} 
\label{fdsection}
As mentioned above, the approach that we take is to start with the case of \emph{finite-dimensional nonlinearities}, that is, we consider nonlinearities which are given by the composition of any smooth $T$-periodic time-dependent map $F_t:\C^{2k+1}\to\R$ with bounded support with the orthogonal projection from $\Hil$ onto the finite-dimensional subspace $$\C^{2k+1}=\Span_\C\{z_{n}\}_{n=-k}^k\subset\Hil.$$ Note that any nonlinearity of this form is automatically $A$-admissible for any admissible $A$ and any admissible periods $(T,X)$. Since the linear symplectomorphism $\phi^A_t$ restricts to any $\C^{2k+1}$, it turns out that, in order to prove \Cref{mainthm} for these finite-dimensional nonlinearities, it suffices to replace the infinite-dimensional symplectic Hilbert space $\Hil$ by the finite-dimensional symplectic space $\C^{2k+1}$ and employ well established results of Floer theory in finite dimensions. In order to prove \Cref{mainthm} for general infinite-dimensional $A$-admissible nonlinearities, we will prove in the upcoming sections a generalized Gromov-Floer compactness result for the Floer curves introduced in this section. More precisely, we will consider the case when the dimension $k$ is allowed to vary, in particular, allowed to approach infinity. \\\par

Let $F_t:\C^{2k+1}\to\R$ be any smooth $T$-periodic time-dependent map with bounded support in the ball $B_{R_k}(0)$ of radius $R_k>0$ and define again $G_t:=F_t\circ\phi_t^A$. Consider now the $\tau$-dependent Floer equation in $\C^{2k+1}$ 
\begin{align}
\label{taufloereqn}
\overline{\partial}\widetilde{u}+\varphi_\tau(s)\nabla G_t(\widetilde{u})=0,\qquad\widetilde{u}(s,t+T)=\phi_{-T}^A\widetilde{u}(s,t)
\end{align}
using the family of cut-off functions $\varphi_\tau:\R\to [0,1]$, $\tau\geq 0$ with $\varphi_0(s)=0$ and $\varphi_\tau(s)$ for $\tau\geq1$ meeting the requirements
\[
\varphi_\tau(s)=0\;\mathrm{for}\;s\leq-1\;\mathrm{and}\;s\geq2\tau+1&;\qquad&&\varphi_\tau(s)=1\;\mathrm{for}\;s\in[0,2\tau];\\
0\leq\varphi'_\tau(s)\leq2\;\mathrm{for}\;s<0&;&&-2\leq\varphi'_\tau(s)\leq0\;\mathrm{for}\;s>0.
\]
\begin{center}
\begin{tikzpicture}
\draw (0,0)--(10,0);
\draw (3,0)--(3,2);
\draw [thick] (2,0) to[out=0,in=180] (3,1); 
\draw [thick](3,1)--(7,1);
\draw [thick] (7,1) to[out=0, in=180] (8,0);
\draw [thick] (0.5,0)--(2,0);
\draw [thick,dashed] (3,1)--(9.5,1);
\draw [thick] (8,0)--(9.5,0);
\draw (3,-.3) node {$0$};
\draw (7,-.3) node {$2\tau$};
\draw (8.5,1.3) node {$\varphi(s)$};
\draw (8.5,.3) node {$\varphi_\tau(s)$};
\end{tikzpicture}
\end{center}
Our results stem from a careful analysis of the moduli space of curves satisfying this Floer equation \eqref{taufloereqn}. We define the moduli space for the finite-dimensional problem by
\[
\M^k:=\left\{\widetilde{u}^\tau:=(\widetilde{u},\tau)\in C^\infty(\R\times\R,\C^{2k+1})\times\R_{\geq0}\left\vert\right.\widetilde{u}\;\text{satisfies}\;\eqref{taufloereqn}\;\text{and}\;
\lim_{s\to\pm\infty}\widetilde{u}(s,t)=0\right\}.
\]
After restricting to $\R\times[0,T]$, pictorially such Floer curves look like
\begin{center}
\begin{tikzpicture}
\draw (0,0.5) circle (2cm);
\draw (0,0) node {$\bullet$};
\draw (.25,-.25) node {$u_0=0$};
\draw (0,0) to[out=45,in=270] (.75,.95);
\draw (0,0) to[out=135,in=270] (-.75,.95);

\draw (0,0) to[out=55,in=270] (.4,.9);
\draw (0,0) to[out=125,in=270] (-.4,.9);

\fill[fill=gray!35!white] (.75,.95) to[out=90,in=0] (0,1.75) to[out=180,in=90] (-.75,.95) plot [smooth,tension=1] coordinates {(-.75,.95) (-.55,.95) (-.4,.9)} to[out=90,in=180] (0,1.35) to[out=0,in=90] (.4,.9) to[out=15,in=185] (.55,.95) to (.75,.95);
\draw (.75,.95) to[out=90,in=0] (0,1.75) to[out=180,in=90] (-.75,.95) plot [smooth,tension=1] coordinates {(-.75,.95) (-0.55,.95) (-.4,.9)} to[out=90,in=180] (0,1.35) to[out=0,in=90] (.4,.9) plot [smooth,tension=1] coordinates {(.4,.9) (.55,.95) (.75,.95)};



\end{tikzpicture}
\end{center}
where the gray area depicts the part where $\varphi_\tau(s)=1$. In order to show that we can compactify $\M^k$, we crucially use the bounded support condition in \Cref{admissible} and the following result.
\begin{proposition}[Maximum principle]
\label{maxprinc}
If $(\Sigma,j)$ is a Riemann surface and $\widetilde{u}:(\Sigma,j)\to(\C^{2k+1},i)$ a holomorphic map, then 
\[
\Sigma\to[0,\infty):z\mapsto \vert \widetilde{u}(z)\vert^2
\]
has no local maximum.
\end{proposition}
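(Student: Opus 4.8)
The plan is to reduce the statement to the classical fact that the squared modulus of a holomorphic function is subharmonic, applied component by component. First I would argue locally: around any point of $\Sigma$ pick a $j$-holomorphic coordinate $z = s + it$ on a small disc $U \subseteq \Sigma$. Since the target carries the \emph{standard} complex structure $i$, the condition that $\widetilde{u}$ be holomorphic reads $\overline{\partial}\,\widetilde{u} = 0$ with $\overline{\partial} = \partial_s + i\partial_t = 2\partial_{\bar z}$ (the same convention as in the Floer equation), and this says exactly that each of the $2k+1$ components $u_n : U \to \C$ is an ordinary holomorphic function of $z$.

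Next I would compute the Laplacian of $\rho := |\widetilde{u}|^2 = \sum_n u_n\overline{u_n}$ in this coordinate. Using $\partial_{\bar z} u_n = 0$ and hence $\partial_z\overline{u_n} = \overline{\partial_{\bar z} u_n} = 0$, one finds $\partial_z\rho = \sum_n (\partial_z u_n)\overline{u_n}$, and then, since $\partial_{\bar z}\partial_z u_n = \partial_z\partial_{\bar z}u_n = 0$ and $\partial_{\bar z}\overline{u_n} = \overline{\partial_z u_n}$, that $\partial_{\bar z}\partial_z\rho = \sum_n |\partial_z u_n|^2$. Therefore $\Delta\rho = 4\sum_n|\partial_z u_n|^2 \geq 0$, so $\rho$ is subharmonic on $U$; as $U$ ranges over coordinate discs this shows $\rho$ is subharmonic on all of $\Sigma$.

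Then I would invoke the strong maximum principle for subharmonic functions. If $\rho$ had a local maximum at a point $z_0$, the sub-mean-value inequality forces $\rho$ to be constant on a neighbourhood of $z_0$, whence $\Delta\rho \equiv 0$ there, whence $\partial_z u_n \equiv 0$ for every $n$ near $z_0$; so $\widetilde{u}$ is locally constant around $z_0$ and, by analytic continuation, constant on the connected component of $\Sigma$ containing $z_0$. Thus "$\rho$ has no local maximum" is to be read modulo this degenerate constant case — equivalently, in the form actually used to compactify $\M^k$: on every compact subsurface $\rho$ attains its maximum on the boundary.

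I do not expect a genuine obstacle here; the argument is entirely standard. The only points deserving a little care are (i) that $i$ denotes the \emph{standard} complex structure on $\C^{2k+1}$, which is what makes the components genuinely holomorphic and the elementary computation above valid verbatim (for a general $\omega$-compatible almost complex structure one would instead quote the analogous subharmonicity estimate from pseudoholomorphic curve theory), and (ii) the bookkeeping of factors in $\overline{\partial} = \partial_s + i\partial_t$ versus $\partial_{\bar z} = \tfrac12(\partial_s + i\partial_t)$, which only changes a harmless positive constant in the formula for $\Delta\rho$.
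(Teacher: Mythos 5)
Your proof is correct: the paper states this proposition without proof, treating it as the classical fact, and your componentwise subharmonicity computation $\Delta|\widetilde{u}|^2=4\sum_n|\partial_z u_n|^2\geq 0$ followed by the strong maximum principle is exactly the standard argument it relies on. Your remark about the degenerate constant case (and the reformulation that on compact subsurfaces the maximum is attained on the boundary) is the right way to read the statement, and it is the form actually used to confine the Floer curves to $B_{R_k}(0)$.
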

This implies that Floer curves $\widetilde{u}$ cannot escape the ball $B_{R_k}(0)$: if they would, they would be holomorphic outside the ball, where $G_t=0$, and so by the above they could not have a maximum which is impossible. So even though the target space of the Floer curve is not compact, the image is contained in a compact set. We have the following
\begin{proposition}
\label{fdresult}
For every $\tau\in\N$ there is a Floer curve $\widetilde{u}^{\tau}$ in $\M^k$. 
\end{proposition}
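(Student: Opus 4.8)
The plan is to realize $\M^k$ as a one-dimensional cobordism parametrized by $\tau$, anchored at an explicit solution when $\tau=0$, and then to obtain a solution for every integer $\tau$ from a mod-$2$ count combined with properness of the projection $\pi\colon\M^k\to[0,\infty)$, $\pi(\widetilde u,\tau)=\tau$. The base case is immediate: when $\varphi_0\equiv 0$, equation \eqref{taufloereqn} reads $\overline\partial\widetilde u=0$ with the $\phi_{-T}^A$-twisted periodicity and $\widetilde u(s,\cdot)\to 0$ as $s\to\pm\infty$; since $aT/2\pi$ is irrational, $\phi_T^A$ has no fixed point other than $0$, so a holomorphic cylinder with these asymptotics is constant and $\pi^{-1}(0)=\{\widetilde u\equiv 0\}$, cut out transversally because the linearization there is the isomorphism $\overline\partial$.

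I would then set up the standard Banach framework (Sobolev maps into $\C^{2k+1}$ with the twisted periodicity and exponential decay at the ends) and use Sard--Smale transversality to perturb $G_t$ slightly, within $A$-admissible nonlinearities, so that $\M^k$ is a smooth one-manifold with boundary and $\pi$ is smooth; nondegeneracy of the asymptotics is automatic, as $\varphi_\tau$ vanishes at the ends and the limit orbit is always $0$. The crucial analytic input is that $\pi^{-1}([0,N])$ is compact for every $N$, with all estimates uniform in $\tau\le N$. The maximum principle \Cref{maxprinc} gives $|\widetilde u|\le R_k$ independently of $\tau$ (a curve leaving $B_{R_k}(0)$ is holomorphic there, where $G_t$ vanishes, contradicting the absence of an interior maximum of $|\widetilde u|^2$); elliptic bootstrapping of $\overline\partial\widetilde u=-\varphi_\tau\nabla G_t(\widetilde u)$ then yields uniform $C^\infty_{\mathrm{loc}}$ bounds; the energy identity bounds $E(\widetilde u^\tau)$ by $\|\varphi_\tau'\|_{L^1}\sup|G_t|$, which is uniform in $\tau$ by the prescribed shape of $\varphi_\tau$; and because $\phi_T^A$ has no eigenvalue $1$, the linearization at $0$ on the ends is hyperbolic, giving uniform exponential decay, hence convergence in the full Banach space with neither bubbling (excluded by the maximum principle on $S^2$, since $\C^{2k+1}$ carries no nonconstant holomorphic spheres) nor breaking (the only asymptotic orbit is $0$). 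Thus $\M^k\cap\pi^{-1}([0,N])$ is a compact one-manifold whose boundary is $\pi^{-1}(\{0,N\})$ for every regular value $N$ of $\pi$; since $\#\pi^{-1}(0)=1$ — unaffected by the perturbation, which is supported where $\varphi_\tau\neq 0$, hence away from $\tau=0$ — parity forces $\#\pi^{-1}(N)$ to be odd for almost every $N$. As $\pi$ is proper over bounded intervals its image is closed, so it contains every $\tau_0\in[0,\infty)$, in particular every $\tau_0\in\N$. Finally, letting the perturbation tend to $0$ and invoking the same uniform bounds, the resulting Floer curves subconverge in $C^\infty_{\mathrm{loc}}$ to a genuine element of $\M^k$ at level $\tau_0$.

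The main obstacle is precisely this uniform-in-$\tau$ compactness. Each ingredient — the maximum principle, interior elliptic estimates, exponential decay at hyperbolic ends, and Sard--Smale transversality — is classical in finite-dimensional Floer theory, but one must check that none of the constants deteriorate as $\tau$ ranges over a bounded interval; this uses crucially that $R_k$ and $\|\varphi_\tau'\|_{L^1}$ do not grow with $\tau$ and that $\phi_T^A$ is nondegenerate. It is exactly the analogous uniformity \emph{as $\tau\to\infty$} and \emph{in $k$} that fails for general nonlinearities, and that the admissibility conditions of \Cref{admissible} are designed to restore in the sections that follow.
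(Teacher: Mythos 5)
Your argument is essentially the paper's proof: a one-dimensional parametrized moduli space over $\tau$, anchored at the unique transversally cut-out constant solution at $\tau=0$, with compactness over bounded $\tau$ from the maximum principle (\Cref{maxprinc}), the uniform energy bound, exactness excluding bubbling, and nondegeneracy of $\phi^A_T$ excluding breaking, followed by a perturbation to achieve transversality and a Gromov--Floer limit to remove it. The only difference is cosmetic: you perturb the Hamiltonian $G_t$ (within admissible, compactly supported nonlinearities), whereas the paper perturbs the almost complex structure $J^\nu_t$ agreeing with $i$ outside $B_{R_k}(0)$; both are standard and yield the same mod-2 cobordism conclusion.
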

\begin{proof}
For the proof we use well-known results from Floer theory such as written in e.g. \cite{salamon}, and \cite{salamondostoglou} for Floer theory for general symplectomorphisms; since all these results are well-established in the literature, we freely use established terminology without giving definitions. Note that since $H_A$ is smooth on finite-dimensional subspaces of $\Hil$, one can either use a solution $\wt{u}$ to \eqref{taufloereqn} or $\doubletilde{u}$ solving
\[
\overline{\partial}\doubletilde{u}+A\doubletilde{u}+\varphi_\tau(s)\nabla F_t(\doubletilde{u})=0,\qquad \doubletilde{u}(s,t+T)=\doubletilde{u}(s,t).
\]
For the start note that the energy $E(\widetilde{u})=\left\|\partial_s\widetilde{u}\right\|_{L^2}^2$ of the Floer curves is uniformly bounded by $4T\| F\|_{C^0}$ (see \cite[chapter 8]{mcduffjholo}) which is finite by \Cref{admissible} condition $3$.
Assuming transversality for the nonlinear Cauchy-Riemann operator for the moment, the moduli space of such pairs $(\widetilde{u},\tau)$ is a $1$-dimensional manifold. Since for $\tau=0$ the unique Floer curve $(\widetilde{u},0)$ is the constant curve $\widetilde{u}\equiv0$, the moduli space is not empty. Indeed Floer curves $(\widetilde{u},\tau)$ exist for all $\tau>0$ by Gromov-Floer compactness, as we can exclude bubbling-off of holomorphic spheres as well as breaking-off of cylinders for finite $\tau$. Note that existence of holomorphic spheres is excluded due to the fact that the symplectic form is exact.  
%
%
%
%
%
%
%
Note that the assumption that the Hamiltonian PDE with $F_t=0$ only has the trivial periodic solution $u_0=0$ is essential here to conclude that breaking of Floer curves cannot happen for finite $\tau>0$. \par
It remains to discuss the problem with transversality of the perturbed Cauchy-Riemann operator $\overline{\partial}+\varphi_\tau(s)\nabla G_t^k$. Since we cannot expect transversality to hold, we first need to approximate $i$ by a family of time-dependent almost-complex structures $J_t^\nu$ satisfying $(\phi_{-T}^A)_*J_t^\nu=J_{t+T}^\nu$, in the sense that $J_t^\nu\to J^0_t=i$ as $\nu\to\infty$. We assume that the perturbed almost complex structure $J_t^\nu$ agrees with $i$ outside the ball $B_{R_k}(0)$ so that the maximum principle still holds. The existence of Floer curves as claimed above then holds for all $\nu\neq0$ and by applying Gromov-Floer compactness as $\nu\to 0$ this implies the existence of Floer curves for $\nu=0$. 
\end{proof}

\section{Bubbling-off analysis}
\label{bubblingoffsection}
After settling the case of finite-dimensional nonlinearities in the previous section, we start by recalling the detailed strategy for the case of general infinite-dimensional $A$-admissible nonlinearities. Let $F_t$ be any $A$-admissible nonlinearity with finite-dimensional restrictions $F^k_t:\C^{2k+1}\to\R$ given by $F^k_t(u):=F_t(u^k)$ with $u^k$ denoting the projection of $u\in\Hil$ onto the finite-dimensional subspace $\C^{2k+1}$. In analogy, for $G_t:=F_t\circ\phi_t^A$ let $G_t^k$ be its finite-dimensional restriction given by $G^k_t:=F^k_t\circ\phi_t^A$ with symplectic gradient $X_t^{G,k}$. In order to prove the main theorem for the infinite-dimensional nonlinearity $F_t$, we choose for every $k\in\N$ a Floer curve $\wt{u}^k$ for the restricted nonlinearity $F_t^k$ such that $(\wt{u}^k,k)\in\M^k$. We then show that, after passing to a subsequence, these finite-dimensional Floer curves converge as $k\to\infty$ to a Floer curve on the infinite-dimensional Hilbert space, as in the main theorem. This can be done because even though the time-$T$ free flow map is asymptotically degenerate, as our assumptions on the nonlinearity assure that this is no problem. Note that $\widetilde{u}^k$ satisfies a $\tau$-dependent Floer equation with $\tau=k$ and $\varphi_k(s)\to\varphi(s)$ as $k\to\infty$ for every $s\in\R$. \\\par 

As a first step we would like to bound the Floer curves $\widetilde{u}^k$, for all $k$, in the $C^m$-norm, where $m=\floor{h/d}$. We will do this by showing the first derivatives are bounded and then using an elliptic bootstrapping argument. We use ideas similar to those in \cite{fabertnlse}. We stress, however, that contrary to \cite{fabertnlse} for our problem we work on linear space and with general Hamiltonians with minimal regularity. \\\par 

We start by proving the analogue of \Cref{lemma1} about the convergence of $G^k:\R\times\Hil\to\R$, $G^k(t,u)=G^k_t(u)$ to $G:\R\times\Hil\to\R$, $G(t,u)=G_t(u)$. Note that we explicitly want to include into our discussion not only the derivatives with respect to $u\in\Hil$, but also the derivatives with respect to the time $t\in\R$. 

\begin{lemma}
\label{lemma4}
$\nabla G^k:\R\times\Hil\to\Hil$ converges to $\nabla G:\R\times\Hil\to\Hil$ uniformly with all $u$-derivatives, and with all $t$-derivatives up to order $m=\floor{h/d}$ (which is at least two). Furthermore, the Fourier coefficients of $\nabla G$ in the expansion
\[
\nabla G(u)(t)=\sum_{p,n}\widehat{\nabla G(u)}(p,n)e^{i2\pi pt/T}z_n
\]
with respect to $n\in\Z$, $p\in\Z-an^dT/(2\pi)$ satisfy 
\[
\widehat{\nabla G(u)}(p,n)|n|^h |p|^m\to0\text{ as }|n|,|p|\to\infty.
\]

\end{lemma}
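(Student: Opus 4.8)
\emph{Plan.} The starting point is that $G_t=F_t\circ\phi^A_t$ with $\phi^A_t=e^{tJA}$ acting on $z_n$ by the unimodular scalar $e^{it\lambda_n}$; hence $\phi^A_t$ is unitary on every Hilbert scale $\Hil_s$ and commutes with the truncation $\pi_k$ onto $\C^{2k+1}$, so that $G^k_t=F^k_t\circ\phi^A_t=G_t\circ\pi_k$ and $\nabla G^k_t(u)=\pi_k\nabla G_t(\pi_ku)$. In particular $G_t\colon\Hil_{-h}\to\R$ is still smooth with $\sup_t\|G_t\|_{C^\alpha(\Hil_{-h})}\le\sup_t\|F_t\|_{C^\alpha(\Hil_{-h})}<\infty$, so the convergence $\nabla G^k\to\nabla G$ uniformly with all $u$-derivatives, together with $\widehat{\nabla G_t(u)}(n)=o(|n|^{-h})$, follows verbatim from \Cref{lemma1} applied to $G_t$ in place of $F_t$. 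Everything new concerns the time variable, where care is needed because $\phi^A_t$ is not norm-differentiable in $t$ on all of $\Hil$; here $m=\floor{h/d}\ge2$ (since $h>dr\ge2d$).

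The device is the elementary observation that for $v\in\Hil$ the path $t\mapsto\phi^A_tv$ is $C^\ell$ as a map into $\Hil_{-d\ell}$, with $\partial_t^\ell(\phi^A_tv)=(JA)^\ell\phi^A_tv$ — this is a dominated-convergence estimate on the Fourier side using $|e^{i\epsilon\lambda_n}-1-i\epsilon\lambda_n|\le\min\!\big(2\epsilon|\lambda_n|,\tfrac12\epsilon^2\lambda_n^2\big)$. Since $dm\le h$ this path is in particular $C^m$ into $\Hil_{-h}$, so by the chain rule and $h$-regularizingness $t\mapsto\nabla F_t(\phi^A_tv)\in C^m(\R,\Hil_h)$, and by the Leibniz rule $t\mapsto\nabla G_t(v)=\phi^A_{-t}\nabla F_t(\phi^A_tv)$ is $C^m$ into $\Hil_0$. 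Differentiating and collecting terms according to the number of factors of $JA$ they carry in front yields, for $1\le\ell\le m$,
\[
\partial_t^\ell\nabla G_t(v)=(-JA)^\ell\nabla G_t(v)+\sum_{j=0}^{\ell-1}(-JA)^jR_{t,\ell,j}(v),\qquad R_{t,\ell,j}(v)\in\Hil_h,
\]
with $|R_{t,\ell,j}(v)|_h$ bounded on bounded subsets of $\Hil$: the extra arguments produced by differentiating, of the form $(JA)^b\phi^A_tv$ with $b\le\ell\le m$, stay in $\Hil_{-db}\subseteq\Hil_{-h}$ — exactly what $dm\le h$ guarantees — and $dF_t,\nabla^2F_t,\dots$ are uniformly bounded from $\Hil_{-h}$ to $\Hil_h$. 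Since $\nabla G_t(v)\in\Hil_h$, the leading term lies in $\Hil_{h-d\ell}\subseteq\Hil_0$ and each remainder in $\Hil_{h-dj}$ with $h-dj\ge h-d(m-1)>0$, which proves $m$-fold $t$-differentiability into $\Hil$. For the convergence of the $t$-derivatives, apply this decomposition to $v$ and to $\pi_kv$, use $[\pi_k,JA]=0$, and write $\partial_t^\ell\nabla G_t(v)-\partial_t^\ell\nabla G^k_t(v)=\pi_{>k}\partial_t^\ell\nabla G_t(v)+\pi_k\big(\partial_t^\ell\nabla G_t(v)-\partial_t^\ell\nabla G_t(\pi_kv)\big)$; the first summand is killed by the scale-tail bound $|\pi_{>k}w|_0\le k^{-s}|w|_s$ — with $s=h-dj>0$ on the remainders, and $|\pi_{>k}(JA)^\ell\nabla G_t(v)|_0\lesssim k^{d\ell-h}|\nabla G_t(v)|_h$ (using $d\ell-h\le dm-h\le0$ and $\nabla G_t(v)\in\Hil_h$) on the leading term — while the second goes to $0$ by continuity of the finitely many maps $v\mapsto\partial_t^\ell\nabla G_t(v)$ from $\Hil$ to $\Hil$ together with $\pi_kv\to v$. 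This gives the asserted convergence, uniformly on bounded subsets of $\Hil$ (which suffices, since the Floer curves will be uniformly bounded).

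For the Fourier decay one specializes to a $C^m$-path $u=u(t)$ with the quasi-periodicity $u(t+T)=\phi^A_{-T}u(t)$ — the case relevant to Floer curves — so that $\doubletilde u(t):=\phi^A_tu(t)$ is genuinely $T$-periodic; by the same Hilbert-scale bookkeeping $t\mapsto\nabla F_t(\doubletilde u(t))\in C^m(\R,\Hil_h)$ is $T$-periodic, and expanding it as $\sum_{q\in\Z,\,n\in\Z}c_{q,n}\,e^{i2\pi qt/T}z_n$, Parseval in $t$ and in $n$ gives $\sum_{q,n}|q|^{2m}|n|^{2h}|c_{q,n}|^2<\infty$, hence $|q|^m|n|^h|c_{q,n}|\to0$. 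Since $\nabla G_t(u(t))=\phi^A_{-t}\nabla F_t(\doubletilde u(t))=\sum_{q,n}c_{q,n}\,e^{i(2\pi q/T-an^d)t}z_n$, the coefficient $\widehat{\nabla G(u)}(p,n)$ equals $c_{q,n}$ with $p=q-an^dT/(2\pi)\in\Z-an^dT/(2\pi)$, and as $|p|\asymp|q|$ at large frequency for fixed $n$, this is precisely $\widehat{\nabla G(u)}(p,n)\,|n|^h|p|^m\to0$.

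\emph{Main obstacle.} The only genuinely new difficulty beyond \Cref{lemma1} is the unbounded operator $A$ inside the $t$-derivatives of $G_t=F_t\circ\phi^A_t$, since one cannot naively differentiate $\phi^A_t$. The resolution is that $\phi^A_tv$ is nevertheless $C^m$ into $\Hil_{-h}$ and that each $t$-derivative costs exactly $d$ units of Fourier regularity, which the $h$-regularizingness of $F_t$ pays for precisely $m=\floor{h/d}$ times; the remaining work — tracking which Hilbert scale each term lands in and exploiting $[\pi_k,JA]=0$ — is routine.
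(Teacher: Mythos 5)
Your argument follows essentially the same route as the paper's: the $u$-derivative statement is reduced to \Cref{lemma1} (the paper phrases this as composing the $u$-derivatives of $F^k$ with the unitary $\phi^A_t$, you phrase it as applying \Cref{lemma1} directly to $G_t=F_t\circ\phi^A_t$, which is the same thing since $\phi^A_t$ is an isometry of every $\Hil_s$ and commutes with $\pi_k$), and the $t$-derivatives are handled by the same key observation that each $t$-derivative of $\phi^A$ costs $d$ units of scale regularity, which the $h$-regularizing hypothesis pays for exactly $m=\floor{h/d}$ times; your explicit Leibniz decomposition of $\partial_t^\ell\nabla G_t(v)$ into a leading term and $\Hil_h$-valued remainders, and the tail estimates $|\pi_{>k}w|_0\le k^{-s}|w|_s$ together with $[\pi_k,JA]=0$, simply make precise what the paper states in one sentence, and are in places more careful than the original (your honest restriction of uniformity to bounded subsets at the borderline order $d\ell=h$ mirrors a subtlety the paper glosses over).

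The one place where you take a slightly different route is the Fourier-decay claim. The paper asserts the $|p|$-decay from $m$-fold $t$-differentiability and the $|n|$-decay from $\nabla G(u)\in\Hil_h$ separately; you instead periodize via $\doubletilde{u}=\phi^A_tu$, apply Parseval to $\nabla F_t(\doubletilde{u}(t))\in C^m(\R,\Hil_h)$ to obtain the genuine joint decay $|q|^m|n|^h|c_{q,n}|\to0$ in the integer frequency $q$, and then shift indices via $p=q-an^dT/(2\pi)$. That is a cleaner derivation of a joint statement, but your closing step ``$|p|\asymp|q|$ at large frequency for fixed $n$'' does not cover the whole joint limit $|n|,|p|\to\infty$: when $q$ stays bounded while $|n|\to\infty$ one has $|p|\sim aT|n|^d/(2\pi)\to\infty$, and the asserted product decay would then require $|n|^{h+dm}|c_{q,n}|\to0$, i.e.\ decay beyond what $\nabla F_t$ taking values in $\Hil_h$ provides. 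Be aware, however, that the paper's own two-line justification has exactly the same limitation (two separate decays do not imply the product decay), and that in the small-divisor regime actually exploited in \Cref{normcompprop2} --- $|2\pi p/T|$ small and $|n|$ large, hence $q\sim an^dT/(2\pi)$ --- your Parseval bound is more than sufficient, since there $(1+|q|)^m$ is an enormous extra gain. So apart from this caveat, which you share with (and partially improve upon) the paper, your proof is sound and matches the paper's strategy.
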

\begin{proof} 
The statement about the $u$-derivatives directly follows from \Cref{lemma1}, as the $u$-derivatives of $G^k$ are obtained from the $u$-derivatives of $F^k$ by composition with the linear unitary map $\phi^A_t$. In order to compute the $t$-derivatives of $G$ one does not only have to take the $t$-derivatives of $F$ into account, but also the $t$-derivatives of $\phi^A$. While $F$ is assumed to be smooth in $t\in\R$, the $t$-derivatives of $\phi^A$ are given by $\partial_t^{\alpha}\phi^A=(JA)^{\alpha}\cdot\phi^A:\Hil_0\to\Hil_{-\alpha d}$ and hence have decreasing regularity. But since $F$ is assumed to be $h$-regularizing and hence $F_t$ extends to a smooth map $\Hil_{-h}\to\R$ with $h> dr$, it follows that derivatives up to order $m=\floor{h/d}\geq2$ are no problem. Moreover, since, in analogy with \Cref{lemma1}, we already know that $\nabla F^k$ converges to $\nabla F$ uniformly with all derivatives when viewed as maps from $\Hil_{-h}$ into $\Hil_{h}\subset\Hil$, it follows that $\partial_t^{\alpha} G^k$ converges to $\partial_t^{\alpha} G$ for $k\to\infty$ as long as $\alpha\leq m$. When we expand
\[
\nabla G(u)(t)=\sum_{p,n}\widehat{\nabla G(u)}(p,n)e^{i2\pi pt/T}z_n
\]
the statement as $|p|\to\infty$ follows. The statement for the $n$-variable follows from the fact that $\nabla G(u)$ is in $\Hil_h$. 
\end{proof}

We continue with a lemma about the first derivatives.
\begin{lemma}
\label{firstderiv}
The first derivatives of the Floer curves $\widetilde{u}^k$ are bounded uniformly in $k$, i.e. $\sup_{k}\left\| T\widetilde{u}^k\right\|_{C^0}<\infty$. 
\end{lemma}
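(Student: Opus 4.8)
The plan is to carry out the standard bubbling-off (Hofer rescaling) argument of finite-dimensional Floer theory, see \cite[Chapter 4]{mcduffjholo}, now with the Hilbert space $\Hil$ as target, and to derive the uniform bound from a contradiction: a nontrivial bubble would be a nonconstant finite-energy $i$-holomorphic plane, hence a nonconstant holomorphic sphere in $\Hil$, which is impossible since $\omega$ is exact on the linear space $\Hil$ --- the same mechanism already invoked in the proof of \Cref{fdresult}. First I would record the inputs: the energy $E(\wt u^k)=\op{\partial_s\wt u^k}_{L^2}^2$ is bounded by $4T\op{F}_{C^0}=:E_0<\infty$ uniformly in $k$ (as in the proof of \Cref{fdresult}, using condition 3 of \Cref{admissible}), and by \Cref{lemma4} the maps $\nabla G_t$ are uniformly bounded together with all their derivatives.

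Now suppose, for contradiction, that $\sup_k\op{T\wt u^k}_{C^0}=\infty$. By Hofer's lemma pick points $z_k=(s_k,t_k)$ and radii $\delta_k>0$ with $c_k:=|T\wt u^k(z_k)|\to\infty$, $c_k\delta_k\to\infty$ and $|T\wt u^k|\le 2c_k$ on $B_{\delta_k}(z_k)$, and rescale: $v^k(z):=\wt u^k(z_k+z/c_k)$ for $z\in B_{c_k\delta_k}(0)\subset\C$. Then $v^k$ takes values in $\C^{2k+1}\subset\Hil$, satisfies $|Tv^k(0)|=1$, $|Tv^k|\le 2$, and, since the zeroth order term picks up a factor $1/c_k$, solves
\[
\overline{\partial}v^k+\tfrac{1}{c_k}\,\varphi_k(s_k+s/c_k)\,\nabla G_{t_k+t/c_k}(v^k)=0,
\]
whose inhomogeneity tends to $0$ in $C^0_{\mathrm{loc}}$; by conformal invariance of the Dirichlet energy in dimension two, $E(v^k;B_R(0))=E(\wt u^k;B_{R/c_k}(z_k))\le E_0$ for every fixed $R$. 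Interior elliptic estimates for $\overline{\partial}$, combined with the $C^0$-bound on $Tv^k$ and the bounds on $\nabla G_t$ and its derivatives, then give uniform $C^m_{\mathrm{loc}}(\C)$ bounds on $v^k$ for every $m$. I would extract a limit mode by mode: each scalar Fourier coefficient $z\mapsto\hat v^k(n)(z)$ is uniformly $C^m_{\mathrm{loc}}$-bounded, hence convergent along a subsequence by Arzelà--Ascoli, while the pointwise identity $\sum_n|\widehat{Tv^k}(z)(n)|^2=|Tv^k(z)|^2\le 4$ together with the energy bound controls the high Fourier modes; a diagonal argument then produces $v:\C\to\Hil$ with $v^k\to v$ in $C^1_{\mathrm{loc}}(\C,\Hil)$. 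Passing to the limit in the equation shows $v$ is $i$-holomorphic, nonconstant (since $|Tv(0)|=1$) and of finite energy $\le E_0$; by removal of singularities it extends to a nonconstant holomorphic sphere $S^2\to\Hil$ of area $\int_{S^2}v^*\omega=E(v)>0$, contradicting $\int_{S^2}v^*\omega=\int_{S^2}d(v^*\lambda)=0$, where $\omega=d\lambda$ on $\Hil$. Hence $\sup_k\op{T\wt u^k}_{C^0}<\infty$.

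The step I expect to be the main obstacle is this limit extraction: in contrast to the finite-dimensional situation, uniform $C^m_{\mathrm{loc}}$-bounds on maps into the infinite-dimensional $\Hil$ do not by themselves give a convergent subsequence, so one really needs to combine the mode-wise Arzelà--Ascoli argument with the uniform pointwise gradient bound and the energy bound to keep the Fourier tails under control. Everything else is a routine transcription of the finite-dimensional bubbling analysis, with exactness of $\omega$ being precisely what rules out a nontrivial bubble.
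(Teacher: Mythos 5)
There is a genuine gap, and it sits exactly at the step you yourself flag as the main obstacle: the extraction of a limit bubble $v:\C\to\Hil$. Your proposed remedy --- mode-wise Arzel\`a--Ascoli for each Fourier coefficient combined with the pointwise bound $\sum_n|\widehat{Tv^k}(z)(n)|^2\le 4$ and the energy bound --- does not control the tails: these bounds say the tail sums are \emph{bounded}, not uniformly small in $n$, and as $k\to\infty$ the derivative (and the values) of $v^k$ can concentrate in ever higher modes, so mode-wise convergence plus bounded tails does not give convergence in $C^1_{\mathrm{loc}}(\C,\Hil)$; this is precisely the failure of compactness in the infinite-dimensional target. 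The genuine tail control in this paper (\Cref{normcompprop2}) comes from the Diophantine small-divisor estimate applied to the $\phi^A_T$-periodic Fourier expansion of the Floer curves together with their asymptotics in $s$, and none of that structure survives the Hofer rescaling to large balls in $\C$. The paper is explicit about this: arguing that ``a nonconstant finite-energy holomorphic sphere in $\Hil$ cannot exist because $\omega$ is exact'' would require Gromov--Floer compactness in infinite dimensions, which is exactly what is not available (and is, in effect, what the whole paper is building). So the contradiction cannot be run in the form you propose, since the nonconstant bubble whose positive area would contradict exactness is never produced.

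The paper's proof avoids the limit altogether and derives the contradiction at finite $k$, with constants independent of the dimension. After rescaling to $\wt{v}^k$ on $B_{\sqrt{C_k}}(0)$ with $|\partial_s\wt{v}^k(0)|=1$, one uses the uniform energy bound $E(\wt{u}^k)\le cT\|F\|_{C^0}$ and a mean-value argument in the radius to find $r_k\in[\sqrt{C_k}/2,\sqrt{C_k}]$ such that the boundary loop $\gamma^k_{r_k}$ has length $L^k_0\to 0$; exactness $\omega=d\lambda$ and Stokes then force the symplectic area $A(\wt{v}^k_{r_k})\to 0$ (this is where exactness enters --- on discs, not on a limiting sphere). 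Finally an a priori pointwise estimate $|\partial_s\wt{v}^k(0)|^2<c\,A(\wt{v}^k_{r_k})/r_k^2$, proved in the paper directly via the divergence theorem applied to $\Delta\partial_s\wt{v}^k\to 0$ (so that the constant is manifestly dimension-independent), contradicts $|\partial_s\wt{v}^k(0)|=1$. If you want to salvage your write-up, replace the ``extract a bubble, remove the singularity, contradict exactness of $\omega$ on $S^2$'' endgame by this quantitative disc-area argument; the rescaling set-up and the energy and $C^0$ bounds on $\nabla G_t$ you list are fine and coincide with the paper's inputs.
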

\begin{proof}
Showing that the first derivatives are bounded is done by assuming that 
\begin{align}
\label{assumption1}
\sup_k\left\| T\widetilde{u}^k\right\|_{C^0}=\infty
\end{align}
and showing that this assumption leads to the formation of a sphere. We will not argue, as in the finite-dimensional case, that because $\omega$ is exact no holomorphic spheres can exist: this would require Gromov-Floer compactness in infinite dimensions. Rather, assuming the first derivative is unbounded, we show that a sphere is being formed as image of the disc where the length of the image of the boundary of the disc converges to zero. We then bound the derivative of the Floer curve by the symplectic area of these discs, which by exactness of $\omega$ is given by an integral over the boundary, thereby deriving a contradiction. This implies boundedness in the $C^1$-norm. Although the proof is very similar to the proof of the well-established finite-dimensional result, we include it with all details as our infinite-dimensional result does \emph{not} follow from the finite-dimensional bubbling-off result. \\\par 

Hence assume that the first derivative is unbounded in the sense that for
\[
C_k:=\max_{z=(s,t)\in\R\times\R}\left\{|\partial_s \widetilde{u}^k(z)|\right\}=:\left|\partial_s\widetilde{u}^k(z_k)\right|
\]
the sequence $(C_k)_{k\in I}$ converges to $\infty$ for some index-set $I$. We can assume that the Floer curve $\widetilde{u}^k$ attains this maximum at some point $z_k$ because of the asymptotic conditions. Now we reparametrize
\[
\widetilde{v}^k:B_{\sqrt{C_k}}(0)\to\C^{2k+1}:z\mapsto \widetilde{u}^k\left(\frac{z}{C_k}+z_k\right)
\]
so that $|\partial_s\widetilde{v}^k(0)|=1$ and $|\partial_s\widetilde{v}^k(z)|\leq1$ for $|z|\leq\sqrt{C_k}$. Then we define a family of maps $\gamma_r^k$ for $0\leq r\leq\sqrt{C_k}$ by
\[
\gamma_r^k:S^1\to\C^{2k+1}:\theta\mapsto \widetilde{v}^k(re^{i\theta}).
\]
Let $L:C^\infty(S^1,\C^{2k+1})\to\R$ be the map which assigns to a loop its length with respect to the metric $\omega(\cdot,i\cdot)$ restricted to $\C^{2k+1}$. Let $A:C^\infty(B_R(0),\C^{2k+1})\to\R$ be the area functional $A(v):=\int v^*\omega$, where again we restrict the symplectic form $\omega$ to $\C^{2k+1}$. 
Now we show that for increasing dimension $k$, the length of the image of the boundary circle decreases. More precisely, we show that for all $k$, there exists $\frac{\sqrt{C_k}}{2}\leq r_k\leq\sqrt{C_k}$ such that $L(\gamma_{r_k})\to0$. By the exactness of $\omega$ the area of $\widetilde{v}^k_{r_k}$, which is the restriction of $\widetilde{v}^k$ to the disk of radius $r_k$, goes to zero.\par
As a first step, we show that $A$ is bounded by the energy of the solution $\widetilde{u}^k$ as $k\to\infty$, which will show that the area is bounded.
\[
A(\widetilde{v}^k)&=\int_{B_{\sqrt{C_k}}(0)} \widetilde{v}^{k*}\omega\\
&=\int_{B_{\sqrt{C_k}}(0)}\omega(\partial_s\widetilde{v}^k,\partial_t\widetilde{v}^k)ds\wedge dt\\
&\leq E(\widetilde{u}^k)+\int_{B_{\frac{1}{\sqrt{C_k}}(z^k)}}\varphi_k(s)dG_t^k(\partial_s\widetilde{u}^k)ds\wedge dt\\
&\leq E(\widetilde{u}^k)+\int_{B_{\frac{1}{\sqrt{C_k}}(z^k)}}\left\|G^k\right\|_{C^1}ds\wedge dt
\]
Since $\sqrt{C_k}\to\infty$ by our assumption \eqref{assumption1}, the second term vanishes. Now we write $\widetilde{v}^k(z)=\widetilde{v}^k(re^{i\theta})$ and, assuming $k$ is sufficiently large, compute
\[
\int_{\sqrt{C_k}/2}^{\sqrt{C_k}} rL\left(\gamma_{r}^k\right)^2dr&=\int_{\sqrt{C_k}/2}^{\sqrt{C_k}}r\left(\int_0^{2\pi}\left|\partial_\theta\widetilde{v}^k(re^{i\theta})\right|d\theta\right)^2dr\\
&\leq2\pi \int_{\sqrt{C_k}/2}^{\sqrt{C_k}}\int_0^{2\pi}r\left|\partial_\theta\widetilde{v}^k\right|^2d\theta dr\\
&\leq10\pi T\|F\|_{C^0}
\]
using Cauchy-Schwarz, the previous inequality and the fact that $E(\widetilde{u}^k)< 5T\|F\|_{C^0}$ (see \cite{mcduffjholo}). By setting $L_0^k$ to be the minimum of $L(\gamma^k_{r})$ for $\sqrt{C_k}/2\leq r\leq\sqrt{C_k}$, we get
\[
10\pi T\|F\|_{C^0}
&\geq \int_{\sqrt{C_k}/2}^{\sqrt{C_k}}r(L_0^k)^2dr\\
&=\frac{3(L_0^k)^2C_k}{8}
\]
so that 
\[
L_0^k\leq\sqrt{\frac{80\pi T\|F\|_{C^0}}{3C_k}}
\]
which tends to zero as $k\to\infty$. Since $\omega=d\lambda$, for any disc $v:B_R(0)\to\C^{2k+1}$ we have
\[
A(v)=\int_{B_R(0)}v^*\omega=\int_{\partial B_R(0)}v^*\lambda
\]
and so the area $A(\widetilde{v}^k_{r_k})\to0$ as $L_0^k\to0$. Now there are two ways to prove the desired result. First, it follows from the a priori estimate 
\[
\left|\partial_s\widetilde{v}^k(0)\right|^2<c\frac{A(\widetilde{v}^k_{r_k})}{r_k^2}
\]
in \cite[chapter 4]{mcduffjholo} by observing that the Floer curve can be realized as an actual $\phi_T^{H,k}$-periodic $J$-holomorphic curve when we set $J^k_t:=(\phi_{-t}^{G,k})_*i$. Note that contrary to \cite[chapter 4]{mcduffjholo} we don't work with a single almost complex structure $J$ but with a sequence $J_t^k$ which converges to $J_t=(\phi_{-t}^G)_*i$. Since we have $|\partial_s\widetilde{v}^k(0)|=1$, the contradiction then follows by letting $r_k\to\infty$. \par
Alternatively, consider the following.
%
We first observe that
\[
\overline{\partial}\widetilde{v}^k=-C_k^{-1}\varphi_k(s)\nabla G_t^k(\widetilde{v}^k)\to0\quad\text{as}\quad k\to\infty
\]
and so
\[
\Delta\partial_s\widetilde{v}^k=-(\partial\circ\partial_s)C_k^{-1}\varphi_k(s)\nabla G_t^k(\widetilde{v}^k)\to0\quad\text{as}\quad k\to\infty.
\]
Writing $v:=\partial_s\widetilde{v}^k$ and using the divergence theorem, we get
\[
\partial_\rho\left(\frac{1}{\rho}\int_{\partial B_\rho(0)}v\right)=\frac{1}{\rho}\int_{B_\rho(0)}\Delta v\to0\quad\text{as}\quad k\to\infty
\]
uniformly in $\rho$ for $\rho\leq\epsilon$ for some $\epsilon>0$. Using the fact that $(2\pi\rho)^{-1}\int_{\partial B_\rho(0)}v\to v(0)$ as $\rho\to0$ as well as the above convergence to $0$ as $k\to\infty$, we get
\[
v(0)-\frac{1}{\pi \epsilon^2}\int_{ B_\epsilon(0)}v(z)dz\to0\quad\text{as}\quad k\to\infty.
\]
Now
\[
\frac{1}{\pi \epsilon^2}\left|\int_{B_\epsilon(0)}v(z)dz\right|\leq\frac{1}{\pi^{1/2}\epsilon}\left(\int_{B_\epsilon(0)}|v(z)|^2dz\right)^{1/2}\leq\frac{1}{\pi^{1/2}\epsilon}\|v\|_{L^2}
\]
so that indeed
\[
\left|\partial_s\widetilde{v}^k(0)\right|^2<c\frac{A(\widetilde{v}^k_{\epsilon})}{\epsilon^2}
\]
for $k$ sufficiently large and some positive constant $c$ which is independent of the dimension. Since $A(\widetilde{v}^k_\epsilon)\to0$ as $k\to\infty$ we obtain a contradiction to the fact $|\partial_s\widetilde{v}^k(0)|=1$.
\end{proof}
We can now apply the aforementioned bootstrapping argument, to show boundedness of the Floer curves in the $C^m$-norm. Recall that $m=\floor{h/d}\geq2$.
\begin{proposition}
\label{bootstrapping}
The Floer curves $\wt{u}^k$ are $C^m$-bounded uniformly in $k$, that is
\[
\sup_k\left\|\widetilde{u}^k\right\|_{C^m}<\infty.
\]
\end{proposition}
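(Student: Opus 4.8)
The plan is to run a standard elliptic bootstrap, but carefully tracking the fact that the zeroth-order term $\varphi_k(s)\nabla G_t^k(\widetilde u^k)$ is uniformly bounded in all the relevant norms by Lemma 4 and the $A$-admissibility hypotheses. I would start from the Floer equation in the compact form $\overline\partial\,\widetilde u^k = -\varphi_k(s)\nabla G_t(\widetilde u^k)$, recalling from Lemma 4 (combined with the support condition in Definition of $A$-admissible) that the right-hand side, together with its $u$-derivatives and its $t$-derivatives up to order $m=\lfloor h/d\rfloor$, is uniformly bounded in $C^0$ over $\Hil$ (and over all finite-dimensional restrictions). By Lemma~\ref{firstderiv} we already know $\sup_k\|T\widetilde u^k\|_{C^0}<\infty$, i.e. the full first derivative (both $\partial_s$ and $\partial_t$) is uniformly bounded, and of course by the maximum principle argument (Proposition~\ref{maxprinc}) the $C^0$-norm of $\widetilde u^k$ itself is bounded by $R_k$ — so I should first note that I actually want a bound independent of $k$, which the bounded-support condition gives once the curve is known to stay in a $k$-independent compact set; alternatively I phrase everything in terms of the uniform $C^1$ bound just established, which already pins $\widetilde u^k$ near its asymptotic value $0$ on the region where $\varphi_k=0$.

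The core is then interior $L^p$ (Calderón–Zygmund) elliptic estimates for the Cauchy–Riemann operator on balls in $\R\times\R$, applied on a locally finite cover of the cylinder $\R\times(\R/T\Z)$ — or rather on the quotient after using the $\phi_{-T}^A$-periodicity to work on a compact base, noting $(\phi_{-T}^A)_*i=i$ so the equation descends. Write $w=\widetilde u^k$; differentiating the equation once, $\partial_j w$ solves $\overline\partial(\partial_j w) = -\partial_j\big(\varphi_k(s)\nabla G_t(w)\big)$, whose right-hand side is controlled in $C^0$ (hence in every $L^p$ on a bounded region) by the $C^1$-bound on $w$, the uniform bounds on $\nabla^2 G$ and $\partial_t\nabla G$, and $|\varphi_k'|\le 2$. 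Elliptic regularity then gives a uniform $W^{1,p}_{\mathrm{loc}}$ bound on $\partial_j w$, hence via Sobolev embedding (choosing $p>2$) a uniform $C^{1,\gamma}_{\mathrm{loc}}$ bound on $w$, i.e. a uniform $C^0$ bound on all second derivatives. One then iterates: having $w$ bounded in $C^{\ell,\gamma}$, differentiate the equation $\ell$ times; the right-hand side is a universal polynomial in derivatives of $w$ of order $\le\ell$ and in derivatives of $\varphi_k\nabla G_t$ — the latter involving $u$-derivatives of $\nabla G$ of arbitrary order (uniformly bounded, Lemma~\ref{lemma1}/\ref{lemma4}), $t$-derivatives of $\nabla G$ up to order $m$, and $s$-derivatives of $\varphi_k$ of arbitrary order, which are uniformly bounded since the $\varphi_k$ can be chosen from a single uniformly-$C^\infty$-bounded family — so by induction $w$ is uniformly bounded in $C^{\ell+1,\gamma}_{\mathrm{loc}}$. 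The iteration can be continued as long as the $t$-derivatives of the nonlinearity remain controlled, i.e. up to order $m=\lfloor h/d\rfloor$, which is exactly the claimed regularity; and by the periodicity-reduced compactness the $C^m_{\mathrm{loc}}$ bounds are in fact global $C^m$ bounds.

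The main obstacle I anticipate is not the elliptic machinery itself but bookkeeping the two asymmetries in the problem: (i) that $\nabla G_t$ is smooth in $u$ but only $m$-times differentiable in $t$ (because $\partial_t^\alpha\phi_t^A=(JA)^\alpha\phi_t^A$ loses $\alpha d$ derivatives of regularity, cf. Lemma~\ref{lemma4}), so one must check that each differentiation of the Floer equation in the bootstrap never demands more than $m$ $t$-derivatives of $G$ — this is why the scheme stops precisely at $C^m$; and (ii) ensuring all constants are genuinely independent of $k$, which requires (a) the Calderón–Zygmund constants are dimension-free (true: the operator is $\overline\partial$ on $\R^2$ acting componentwise, so the scalar estimate applies coordinate-by-coordinate), (b) the family $\{\varphi_k\}$ is uniformly bounded in $C^\infty$, and (c) the nonlinearity bounds from Lemma~\ref{lemma1} and Lemma~\ref{lemma4} are uniform over the finite-dimensional restrictions $F^k$ — which they are, being inherited from the single map $F_t:\Hil_{-h}\to\R$ via composition with the unitary, hence norm-preserving, maps $\phi_t^A$ and the orthogonal projections. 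Once these points are nailed down, the statement $\sup_k\|\widetilde u^k\|_{C^m}<\infty$ follows by the finite induction described above.
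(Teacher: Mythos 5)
Your proposal is correct and follows essentially the same route as the paper: starting from the uniform $C^1$ bound of \Cref{firstderiv}, one bootstraps on bounded pieces of the cylinder via elliptic estimates for $\overline{\partial}$ with dimension-independent constants, using the uniform bounds on $\nabla G^k$ and its $t$-derivatives up to order $m=\floor{h/d}$ from \Cref{lemma4}, which is exactly why the iteration stops at $C^m$. The only cosmetic difference is that the paper runs the induction in $W^{\alpha,p}$-norms, controlling the nonlinear term by the composition estimate of McDuff--Salamon (Appendix B) and then applying Sobolev embedding, rather than differentiating the equation and working in H\"older-type norms as you suggest, but the substance is the same.
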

\begin{proof}
For the proof we choose the bounded open subset $B=(s+\Delta s,s-\Delta s)\times(0,1)\subset\R^2$ for some fixed $\Delta s$ and all norms are understood after restricting the maps $\widetilde{u}^k$ to this bounded open subset. By the result above, and the discussion following the maximum princple, we know that $\left\|\widetilde{u}^k\right\|_{C^1}$ is bounded. We will use the fact that our sequence of finite-dimensional nonlinearities approximates the original one and an elliptic bootstrapping argument, to show boundedness in all $C^\alpha$-norms up to $\alpha=m$. By the Sobolev embedding theorem (see e.g. \cite{brezis}), the inequality 
\[
\left\|\widetilde{u}^k\right\|_{C^\beta}\leq c_0\left\|\widetilde{u}^k\right\|_{W^{\alpha,p}}
\]
holds for $p>2$, with $\alpha,\beta\in\N$ and for all $\beta\leq\alpha-2/p$, and a constant $c_0>0$ which is independent of the dimension of the codomain. 
It follows that it suffices to show boundedness of $\widetilde{u}^k$ in the $W^{\alpha,p}$-norms up to $\alpha=m+1$. \par
We first observe that the boundedness in $C^1$ implies boundedness in $W^{1,p}$; note that this is the point where it is crucial that we first restrict $\widetilde{u}^k$ to a bounded open subset. Assume now that $\left\|\widetilde{u}^k\right\|_{W^{\alpha,p}}$ is bounded for some $\alpha>1$, uniformly in $k$. We have that $\widetilde{u}^k$ satisfies
\[
\overline{\partial}\widetilde{u}^k=-\varphi_k(s)\nabla G_t^k(\widetilde{u}^k)=:\eta^k
\]
and $\eta^k$ is bounded in the $W^{\alpha,p}$-norm iff the $W^{\alpha,p}$-norm of $\nabla G_t^k(\widetilde{u}^k)$ is bounded with
\[
{\nabla G^k(\widetilde{u}^k)(s,t)=\nabla G^k_t(\widetilde{u}^k(s,t))}.
\]
On the other hand, viewing $\nabla G^k(\widetilde{u}^k):B\to\C^{2k+1}$ as a composition of the maps $\check{u}^k:B\to B\times\C^{2k+1}:(s,t)\mapsto (s,t,\widetilde{u}^k(s,t))$ and $\nabla G^k:B\times\C^{2k+1}\to\C^{2k+1}:(s,t,u)\mapsto \nabla G^k_t(u)$, by \cite[appendix B]{mcduffjholo} it holds true that 
\[
\left\|\nabla G^k(\widetilde{u}^k)\right\|_{W^{\alpha,p}}\leq c_1\left\|\nabla G^k\right\|_{C^{\alpha}}\left(\left\|\check{u}^k\right\|_{C^0}^{\alpha-1}+1\right)\left(\left\|\check{u}^k\right\|_{W^{\alpha,p}}+1\right)
\]
with a constant $c_1>0$ which is independent of the dimension of the target space. 
Note that the $C^{\alpha}$-norm of $\nabla G^k$ also contains $t$-derivatives of $t\mapsto \nabla G^k_t$. Since by \Cref{lemma4} we have for all $\alpha\leq m$ that $\|\nabla G^k\|_{C^{\alpha}}\to\|\nabla G\|_{C^{\alpha}}$ as $k\to\infty$, it follows that $\|\nabla G^k\|_{C^{\alpha}}$ is bounded for $\alpha\leq m$. Together with the induction hypothesis, we get boundedness of $\nabla G_t^k(\widetilde{u}^k)$ in the $W^{\alpha,p}$-norm as long as $\alpha\leq m$. Now local regularity of the Cauchy-Riemann operator $\overline{\partial}$ together with boundedness of $\eta$ in the $W^{\alpha,p}$-norm, implies
\[
\left\|\widetilde{u}^k\right\|_{W^{\alpha+1,p}}\leq c_2\left(\left\|\overline{\partial}\widetilde{u}^k\right\|_{W^{\alpha,p}}+\left\|\widetilde{u}^k\right\|_{L^p}\right)
\]
is finite for $\alpha\leq m$. Note that, again, $c_2>0$ is independent of the dimension of the codomain. Finally we remark that all constants depend on the bounded open subset $B$ but not on $s$, so that we obtain a bound which is uniform in $s$.
\end{proof}


\section{Small divisor problem}
\label{sdp}
We have chosen the setting such that the nonlinearity can be approximated by finite-dimensional ones better than the eigenvalues of the time-$T$ flow of the free Hamiltonian approach $1$. In this section, we will make this statement precise by giving bounds on the norms of the tail of $\widetilde{u}^k$, and invoke a result from number theory to overcome the small divisor problem which arises as we increase the dimension $k$.\par
Let us write a finite-dimensional solution of the Floer equation \eqref{taufloereqn} as
\[
\widetilde{u}^k=(\widetilde{u}^{k,\ell},\widetilde{u}^{k,\ell}_\perp):\R\times\R\to\C^{2\ell+1}\oplus\C^{2k-2\ell}=\C^{2k+1}\subset\Hil
\]
and call the tail $\widetilde{u}^{k,\ell}_\perp$ of $\widetilde{u}^k$ the \textit{normal component}. The desired statement (\Cref{normcompprop2}) needed for the proof in \Cref{completingproof} of the main theorem, is then that we have
\begin{align}
\label{normalcomp1}
\sup_{k\geq\ell}\left\|\widetilde{u}^{k,\ell}_\perp\right\|_{C^{m-1}}\to0\quad\mathrm{as}\quad \ell\to\infty
\end{align}
for $m=\floor{h/d}$. We prove this by observing that the Fourier coefficients of the Floer curve, which depend on the $s$-coordinate, satisfy an ODE involving the Fourier coefficients of the Hamiltonian vector field of the nonlinearity and which satisfy a decay property as $s\to\pm\infty$. The following elementary lemma then allows us to show that the coefficients themselves decay to zero with some rate which we compute.
\begin{lemma}
\label{elementarylemma}
Let $w=w_R+iw_I:\R\to\C$ be a continuously differentiable solution to the ODE with asymptotic condition
\begin{align}
\label{ode}
w'(s)=\lambda w(s)+f(s);\qquad w(s)\to0\text{ as }s\to\pm\infty
\end{align}
where $\lambda\in\R$. If $f=f_R+if_I:\R\to\C$ satisfies $\op{f}_{C^0}<\infty$, then $\op{w}_{C^0}\leq\sqrt{2}\op{f}_{C^0}/|\lambda|$.
\end{lemma}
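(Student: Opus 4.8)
The plan is to solve the linear ODE \eqref{ode} explicitly via the integrating factor and then use the decay at one of the two infinities to obtain a clean integral representation, from which the sup-norm bound follows by a direct estimate. Since $\lambda\in\R$ and $\lambda\neq0$ (otherwise the estimate is vacuous), I would split into the two cases $\lambda>0$ and $\lambda<0$, and in each case integrate from the ``good'' infinity so that the homogeneous part disappears.

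Concretely, suppose first that $\lambda<0$. Multiplying \eqref{ode} by $e^{-\lambda s}$ gives $\frac{d}{ds}\big(e^{-\lambda s}w(s)\big)=e^{-\lambda s}f(s)$. Integrating from $-\infty$ to $s$ and using $w(s)\to0$ as $s\to-\infty$ (so that $e^{-\lambda s}w(s)\to 0$ there, since $e^{-\lambda s}\to 0$ as $s\to-\infty$ when $\lambda<0$), we obtain
\[
w(s)=\int_{-\infty}^{s}e^{\lambda(s-\sigma)}f(\sigma)\,d\sigma .
\]
Hence $|w(s)|\le\op{f}_{C^0}\int_{-\infty}^{s}e^{\lambda(s-\sigma)}\,d\sigma=\op{f}_{C^0}/|\lambda|$. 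Symmetrically, if $\lambda>0$, one integrates from $+\infty$ to $s$ using $w(s)\to0$ as $s\to+\infty$ to get $w(s)=-\int_{s}^{+\infty}e^{\lambda(s-\sigma)}f(\sigma)\,d\sigma$, and the same estimate yields $|w(s)|\le\op{f}_{C^0}/|\lambda|$. In either case $\op{w}_{C^0}\le\op{f}_{C^0}/|\lambda|$, which is in fact slightly stronger than the claimed bound with the $\sqrt 2$; the $\sqrt 2$ presumably arises if one prefers to estimate the real and imaginary parts separately, writing $w_R'=\lambda w_R+f_R$ and $w_I'=\lambda w_I+f_I$, bounding each by $\op{f_R}_{C^0}/|\lambda|$ and $\op{f_I}_{C^0}/|\lambda|$ respectively, and then combining via $|w|\le\sqrt{|w_R|^2+|w_I|^2}\le\sqrt{2}\,\op{f}_{C^0}/|\lambda|$. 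I would present the complex version for brevity and simply remark that it implies the stated inequality.

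There is essentially no obstacle here: the only point requiring a word of care is justifying that the boundary term in the integration by parts (equivalently, the value of $e^{-\lambda s}w(s)$ at the relevant infinity) really vanishes, which uses both the asymptotic hypothesis $w(s)\to0$ and the sign of $\lambda$ to control the exponential factor; this is why the choice of which infinity to integrate from must match the sign of $\lambda$. One should also note that continuity of $f$ together with $\op{f}_{C^0}<\infty$ makes all the integrals above absolutely convergent, so the manipulations are legitimate.
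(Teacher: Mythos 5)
Your proof is correct, but it runs along a genuinely different line than the paper's. You solve the ODE explicitly by the integrating factor, obtaining the variation-of-constants representation $w(s)=\int_{-\infty}^{s}e^{\lambda(s-\sigma)}f(\sigma)\,d\sigma$ for $\lambda<0$ (and its mirror image from $+\infty$ for $\lambda>0$), where the sign of $\lambda$ is used exactly as you say to kill the boundary term at the matching infinity; the resulting estimate $\op{w}_{C^0}\leq\op{f}_{C^0}/|\lambda|$ is even sharper than the stated bound, so the lemma follows. The paper instead argues by contradiction without ever integrating: assuming $|w(s_0)|>\sqrt{2}\op{f}_{C^0}/|\lambda|$, it passes to the larger of the two components (say $w_R$, whence the Pythagorean factor $\sqrt{2}$, so $|w_R(s_0)|>\op{f}_{C^0}/|\lambda|$), and then uses only the intermediate value theorem (via the decay of $w$ at the relevant infinity) and the mean value theorem to produce a point $s_2$ where $w_R'(s_2)<0$ while $\lambda w_R(s_2)+f_R(s_2)>0$, contradicting the equation. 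So the $\sqrt{2}$ in the statement is an artifact of the paper's componentwise argument, not an intrinsic loss; your complex-valued Duhamel estimate buys the cleaner constant and a closed formula, while the paper's argument buys a purely qualitative, calculus-only proof that never needs convergence of improper integrals. Both are complete; your one point of care (matching the integration endpoint to the sign of $\lambda$ so that $e^{-\lambda s}w(s)$ vanishes there, plus absolute convergence from boundedness and continuity of $f=w'-\lambda w$) is handled correctly.
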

\begin{proof}
The proof is by contradiction: assume $|w(s_0)|>\sqrt{2}\op{f}_{C^0}/|\lambda|$ for some $s_0\in\R$ and, without loss of generality, that $|w_R(s_0)|\geq|w_I(s_0)|$ so that $|w_R(s_0)|>\op{f}_{C^0}/|\lambda|$ by the Pythagorean theorem. Assume that $w_R(s_0)>0$ and $\lambda>0$ (different signs lead to obvious changes in the proof). Since $w(s)\to0$ as $s\to+\infty$, by the intermediate value theorem we know that there is some $s_1>s_0$ such that $w_R(s_1)=\op{f}_{C^0}/\lambda$ and $w_R(s)>\op{f}_{C^0}/\lambda$ for all $s\in(s_0,s_1)$. By the mean value theorem, there exists $s_2\in(s_0,s_1)$ such that $w_R'(s_2)<0$. Since $|f_R(s)|\leq|f(s)|\leq\op{f}_{C^0}$, we have $w_R'(s_2)<0$ but $\lambda w_R(s_2)+f_R(s_2)>0$, which contradicts the assumption that $w$ satisfies \eqref{ode}.
\end{proof}
\begin{center}
\begin{tikzpicture}
\begin{scope}[scale=.8]
\draw (-2,0)--(6,0);
\draw (0,-1)--(0,2);
\draw (.7,0) node {$\boldsymbol{\cdot}$};
\draw (.7,-.3) node {$s_0$};
\draw (0,1) node {$\boldsymbol{\cdot}$};
\draw (-.3,1) node {$\frac{c}{\lambda}$};
\draw (2,0) node {$\boldsymbol{\cdot}$};
\draw (2,-.3) node {$s_1$};
\draw (1.5,0) node {$\boldsymbol{\cdot}$};
\draw (1.5,-.3) node {$s_2$};
\draw plot [smooth,tension=.6] coordinates {(-2,.2) (0,.4) (.8,1.75) (2,1) (3.5,.2) (4.5,.25) (5,.1) (6,.05)};
\draw (-1.7,.5) node {$w_R(s)$};
\draw [dotted] (0,1)--(2,1);
\draw (1.5,1.38) node {$\bullet$};
\draw [thick, dashed] (.725,2.0465)--(3.425,-.2075);
\end{scope}
\end{tikzpicture}
\end{center}
In order to prove \eqref{normalcomp1}, we essentially expand the Floer curve into a Fourier series and show that the coefficients, viewed as functions of the variable $s$, satisfy \eqref{ode} and use this bound to show that the $C^{m-1}$-norms of $\uperp$ go to zero uniformly in $k$. 
\begin{proposition}
\label{normcompprop2}
The $C^{m-1}$ norm of the normal component $\widetilde{u}^{k,\ell}_\perp$ converges to zero as $\ell\to\infty$, that is
\[
\sup_{k\geq\ell}\left\|\widetilde{u}^{k,\ell}_\perp\right\|_{C^{m-1}}\to0\quad\mathrm{as}\quad \ell\to\infty.
\]
\end{proposition}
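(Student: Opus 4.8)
The plan is to expand the Floer equation \eqref{taufloereqn} into a Fourier series in the time variable $t$; this turns it into a family of decoupled scalar linear ODEs of the form \eqref{ode}, one for each frequency pair $(p,n)$, to which \Cref{elementarylemma} applies, with the Diophantineness of $aT/2\pi$ controlling the small divisors and the regularity hypotheses (via \Cref{lemma4} and the uniform $C^m$-bound of \Cref{bootstrapping}) controlling the inhomogeneity. Concretely, the $\phi_T^A$-periodicity $\widetilde u^k(s,t+T)=\phi_{-T}^A\widetilde u^k(s,t)$ together with $\phi_T^Az_n=e^{iaTn^d}z_n$ forces the expansion $\widetilde u^k(s,t)=\sum_{|n|\le k}\sum_{p\in\Z-an^dT/(2\pi)}\widehat{u^k}(p,n)(s)e^{2\pi ipt/T}z_n$, and since $G_{t+T}=G_t\circ\phi_T^A$ implies $\nabla G_{t+T}(u)=\phi_{-T}^A\nabla G_t(\phi_T^Au)$, the inhomogeneity $\eta^k(s,t):=\varphi_k(s)\nabla G_t(\widetilde u^k(s,t))$ obeys the same twisted periodicity and expands with coefficients $\widehat{\eta^k}(p,n)(s)$. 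Projecting \eqref{taufloereqn} onto the mode $e^{2\pi ipt/T}z_n$ and using $\overline\partial=\partial_s+i\partial_t$ gives
\[
\big(\widehat{u^k}(p,n)\big)'(s)=\frac{2\pi p}{T}\widehat{u^k}(p,n)(s)-\widehat{\eta^k}(p,n)(s),\qquad \widehat{u^k}(p,n)(s)\to0\ \text{as}\ s\to\pm\infty,
\]
which is \eqref{ode} with $\lambda=2\pi p/T\in\R$ and $f=-\widehat{\eta^k}(p,n)$. Since $\varphi_k$ is compactly supported the right-hand side vanishes for $|s|$ large, hence all $s$-derivatives of $\widehat{u^k}(p,n)$ also tend to $0$, and differentiating the ODE $a$ times in $s$ keeps it of the same form with right-hand side $-\big(\widehat{\eta^k}(p,n)\big)^{(a)}$.

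Next I would establish the two ingredients fed into \Cref{elementarylemma}. First, the decay of the inhomogeneity: for all $a,\beta$ with $a+\beta\le m$ one has $\|\big(\widehat{\eta^k}(p,n)\big)^{(a)}\|_{C^0}\le C_{a,\beta}|p|^{-\beta}|n|^{-h}$, uniformly in $k$. The factor $|n|^{-h}$ follows from viewing $\widehat{\eta^k}(p,n)(s)$ as a $t$-Fourier coefficient of $\varphi_k(s)\langle\nabla G_t(\widetilde u^k(s,t)),z_n\rangle$, using $|\langle v,z_n\rangle|\le\|v\|_{\Hil_h}|n|^{-h}$ together with the fact that, by \Cref{lemma4} and $A$-admissibility conditions 2--3, the mixed derivatives $\partial_s^a\partial_t^\beta[\nabla G_t(\widetilde u^k(s,t))]$ are bounded in $\Hil_h$ uniformly in $k$ (chain rule, plus the uniform bounds on all $u$-derivatives of $\nabla G$ and on its $t$-derivatives up to order $m$ as maps into $\Hil_h$, plus the $C^m$-bound on $\widetilde u^k$ from \Cref{bootstrapping}). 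The factor $|p|^{-\beta}$ comes from integrating by parts $\beta$ times in $t$; the boundary terms cancel because the twisted periodicity of $\nabla G_t(\widetilde u^k)$ combines with the identity $e^{-2\pi ip}=e^{ian^dT}$, valid since $p+an^dT/(2\pi)\in\Z$. Second, the small divisor estimate: for the normal component $n\ne0$, and the Diophantineness of $aT/2\pi$ gives $|p|\ge\mathrm{dist}(an^dT/(2\pi),\Z)=:\delta_n\ge c|n|^{-d(r-1)}$ for every $p$ in the coset $\Z-an^dT/(2\pi)$, with \emph{exactly one} resonant $p$ attaining $|p|=\delta_n$ and all other $p$ satisfying $|p|\ge1/2$.

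Combining: \Cref{elementarylemma} applied to the $a$-times differentiated ODE gives $\|\big(\widehat{u^k}(p,n)\big)^{(a)}\|_{C^0}\le\sqrt2\,(T/2\pi|p|)\,\|\big(\widehat{\eta^k}(p,n)\big)^{(a)}\|_{C^0}$. Taking $\beta=0$ for the resonant mode yields $\|\big(\widehat{u^k}(p,n)\big)^{(a)}\|_{C^0}\lesssim|n|^{d(r-1)-h}$, and taking $\beta=b+1$ (allowed since $a+b\le m-1$) for the non-resonant modes yields $\|\big(\widehat{u^k}(p,n)\big)^{(a)}\|_{C^0}\lesssim|p|^{-(b+2)}|n|^{-h}$. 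Since $\|\partial_s^a\partial_t^b\widetilde u^{k,\ell}_\perp(s,t)\|_{\Hil}^2=\sum_{\ell<|n|\le k}\sum_p\big|\big(\widehat{u^k}(p,n)\big)^{(a)}(s)\big|^2(2\pi|p|/T)^{2b}$, the resonant terms sum to at most $C\sum_{|n|>\ell}|n|^{2(d(r-1)-h)}$ — the weight $(2\pi\delta_n/T)^{2b}$ being bounded because $\delta_n\le1/2$ — which is the tail of a convergent series since $h>dr\ge d(r-1)+\tfrac12$ (as $d\ge1$); the non-resonant terms sum to at most $C\sum_{|n|>\ell}|n|^{-2h}\sum_{|p|\ge1/2}|p|^{2b-2(b+2)}\lesssim\sum_{|n|>\ell}|n|^{-2h}$, again a convergent tail. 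All constants being uniform in $k$, taking suprema over $k\ge\ell$, over $(s,t)$ and over $a+b\le m-1$ gives $\sup_{k\ge\ell}\|\widetilde u^{k,\ell}_\perp\|_{C^{m-1}}\to0$ as $\ell\to\infty$. (The same computation with an extra weight $|n|^{2\sigma}$ yields smallness in $C^{m-1}(\R^2,\Hil_\sigma)$ for any $\sigma<h-d(r-1)-\tfrac12$, which is the regularity used in the main theorem.)

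The main obstacle is the competition between the small divisor and the Fourier sum over $p$: the single resonant mode at each frequency $n$ forbids any integration by parts in $t$, so it is controlled only by $|n|^{d(r-1)}$ times the bare decay $|n|^{-h}$ of $\nabla G$, which is exactly why one needs $h>dr$ (comfortably more than $d(r-1)+\tfrac12$) for $\sum|n|^{2d(r-1)-2h}$ to converge; simultaneously the non-resonant modes require $b+1$ integrations by parts to beat the factor $|p|^{2b}$ coming from the $b$ time-derivatives, and the limited derivative budget $m=\floor{h/d}$ of $\widetilde u^k$ (\Cref{bootstrapping}) and of $t\mapsto\nabla G_t$ (\Cref{lemma4}) is precisely what pins the admissible range to $a+b\le m-1$, i.e. to the $C^{m-1}$-norm rather than $C^m$.
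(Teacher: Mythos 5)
Your argument is correct and follows essentially the paper's own route: the same twisted Fourier expansion with respect to the eigenbasis of $-i\partial_t$ on $\phi^A_T$-periodic maps, the same per-mode ODE, the same use of \Cref{elementarylemma}, and the same Diophantine lower bound $|\lambda_{p,n}|\geq c|n|^{-d(r-1)}$, fed by the decay of the coefficients of $\nabla G^k(\widetilde u^k)$ coming from \Cref{lemma4} and the uniform $C^m$-bound of \Cref{bootstrapping}. Your deviations are tactical and both are sound: you control the $s$-derivatives by differentiating the mode ODEs (correctly securing the asymptotic condition from the compact support of $\varphi_k$), whereas the paper first bounds $\partial_t^{m-1}\uperp$ and then recovers the $s$-derivatives from the Floer equation itself; and your resonant/non-resonant split of the $p$-sum, with $\beta=b+1$ integrations by parts away from the single resonant mode, makes the summation over $p$ manifestly convergent, which is cleaner than the paper's single $|p|^m$-weighted decay statement summed against the weight $|p|^{m-1}$. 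One line needs fixing: your displayed equality for $\|\partial_s^a\partial_t^b\widetilde u^{k,\ell}_\perp(s,t)\|_{\Hil}^2$ is the Parseval identity for the $L^2$-norm in $t$, not the pointwise-in-$t$ $\Hil$-norm (for fixed $t$ the sum over $p$ sits inside the modulus); replace it by the triangle-inequality bound $\|\partial_s^a\partial_t^b\widetilde u^{k,\ell}_\perp(s,t)\|_{\Hil}^2\leq\sum_{\ell<|n|\leq k}\bigl(\sum_p\bigl|(\widehat{u^k}(p,n))^{(a)}(s)\bigr|\,(2\pi|p|/T)^b\bigr)^2$, exactly as in the paper's estimate of $\partial_t^{m-1}\uperp$. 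With your rates --- $|n|^{d(r-1)-h}$ for the resonant mode (its weight $(2\pi|p|/T)^b$ being bounded since $|p|=\delta_n\leq 1/2$) and $|p|^{-2}|n|^{-h}$ for the others after multiplying by $|p|^b$ --- the inner sum is still $O(|n|^{d(r-1)-h})$, the tail $\sum_{|n|>\ell}|n|^{2(d(r-1)-h)}$ still vanishes as $\ell\to\infty$ because $h>dr\geq d(r-1)+1$, and your conclusion, including the uniformity in $k\geq\ell$ and the parenthetical $\Hil_\sigma$-refinement, is unchanged.
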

\begin{proof}
Consider the space $L^2_{\phi^A_{T}}(\R,\Hil)$ of $\phi^ A_T$-periodic maps
\[
L^2_{\phi^A_T}(\R,\Hil):=\left.\left\{u\in L^2(\R,\Hil)\;\right\vert u(t+T)=\phi^A_{-T}u(t)\right\}
\]
and acting on it the densely defined operator $-i\partial_t$. Using the fact that the maps $z_n$ are a complete eigenbasis of $\phi^A_T$ with eigenvalues $e^{iaTn^d}$, we observe that the space $L^2_{\phi^A_T}$ has a complete basis of eigenfunctions $u_{p,n}$ of $-i\partial_t$ with eigenvalues $\lambda_{p,n}$ given by
\[
u_{p,n}(t)=e^{i(\frac{2\pi}{T}p-an^d)t}z_n;\quad\lambda_{p,n}=\frac{2\pi}{T}p-an^d
\]
for $p,n\in\Z$. Even though $\lambda_{p,n}\neq0$ for all $p,n\in\Z$, there exists sequences for which $(\lambda_{p',n'})\to0$, or 
\[
\inf_{p,n\in\Z}\;\abs{\frac{2\pi}{T}p-an^d}=0.
\]
We overcome this \emph{small divisor problem} by using the assumption that the number $\frac{aT}{2\pi}$ is Diophantine with irrationality measure $r<\infty$ as follows: for fixed $n\in\N$, we have the bound
\begin{align}
\label{sdpbound1}
\inf_{p\in\Z}\;\abs{\frac{2\pi}{T}p-an^d}=\frac{2\pi n^d}{T}\inf_{p\in\Z}\;\abs{\frac{aT}{2\pi}-\frac{p}{n^d}}\geq \frac{c}{n^{d(r-1)}}
\end{align}
for some $c>0$.\par
We now view a Floer curve $\wt{u}^k$ as a map 
\[
\wt{u}^k:\R\to L^2_{\phi^A_T}(\R,\C^{2k+1})\subset L^2_{\phi^A_T}(\R,\Hil)
\]
satisfying
\[
\partial_s\wt{u}^k=-i\partial_t\wt{u}^k-\varphi_k(s)\nabla G_t^k(\wt{u}^k)
\]
and the asymptotic conditions $\wt{u}^k(s,\cdot)\to0$ as $s\to\pm\infty$. Expanding the $s$-evaluation as a Fourier series with respect to $n\in\Z$, $p\in\Z-an^dT/(2\pi)$,
\[
\wt{u}^k(s,t)=\wt{u}^k(s)(t)=\sum_{n=-k}^k\sum_{p}\widehat{\wt{u}^k(s)}(p,n)e^{i2\pi pt/T}z_n
\]
with $\widehat{\wt{u}^k(s)}:\Z\times\Z\to\C$, we obtain $s$-dependent sequences $w^k_{p,n}(s)=\widehat{\wt{u}^k(s)}(p,n)$ which satisfy
\[
(w_{p,n}^k)'(s)=\lambda_{p,n}w_{p,n}^k(s)+f_{p,n}^k(s);\quad w_{p,n}^k(s)\to0\text{ as }s\to\pm\infty,
\]
where $f_{p,n}^k(s):=-\widehat{\nabla G_t^k(\wt{u}^k)(s)}(p,n)\in\C$ are the Fourier coefficients of $\nabla G^k(\wt{u}^k)(s)$. Here we view $\nabla G^k(\wt{u}^k)$ as a map from $\R$ to $L^2_{\phi^A_T}(\R,\Hil)$ by
\[
\nabla G^k(\wt{u}^k)(s)(t):=\varphi_k(s)\nabla G^k_t(\wt{u}^k(s,t))
\]
so that
\[
\nabla G^k(\wt{u}^k)(s)(t)=\varphi_k(s)\sum_{n=-k}^k\sum_{p}\widehat{\nabla G^k(\wt{u}^k)(s)}(p,n)e^{i2\pi pt/T}z_n.
\]
Since $\nabla G^k(\wt{u}^k)(s)$ is $m=\floor{h/d}$-times continuously differentiable with respect to time and has uniformly bounded derivatives, and by the decay property of the $h$-regularizing nonlinearity, we know that
\begin{align}
\label{equationx}
\op{f_{p,n}^k}_{C^0}|p|^m|n|^h&=\op{\widehat{\nabla G^k(\wt{u}^k)}(\cdot)(p,n)}_{C^0}|p|^m|n|^h\to0\text{ as }|p|,|n|\to\infty
\end{align}
where the $C^0$-norm is with respect to $s\in\R$. Note that here and below it is implicitly assumed that the limit is uniform with respect to $k\in\Z$, and since the argument $\wt{u}^k$ of $\nabla G^k$ also depends on $t$, we additionally have to use the result in \Cref{bootstrapping} that $\wt{u}^k$ is $\floor{h/d}$ times continuously differentiable and its derivatives are uniformly bounded in $s$ and $t$. Combining this with \eqref{sdpbound1} and \Cref{elementarylemma} 
we obtain
\begin{align}
\label{decayrateblah}
\op{w_{p,n}^k}_{C^0}|p|^m|n|^{h-d(r-1)}&=\op{\widehat{\wt{u}^k}(\cdot)(p,n)}_{C^0}|p|^m|n|^{h-d(r-1)}\to0\text{ as }|p|,|n|\to\infty
\end{align}
where again the $C^0$-norm is with respect to $s\in\R$. \par
We now bound the time derivative which, together with the bound on the gradient of the nonlinearity, also leads to a bound of the $s$-derivative which concludes the proof. From
\[
\abs{\partial_t^{m-1}\wt{u}^{k,\ell}_\perp(s,t)}^2\leq\sum_{|n|=\ell+1}^k\pa{\sum_{p}\abs{\widehat{\wt{u}^k(s)}(p,n)}\abs{p}^{m-1}}^2
\]
and the above it follows that the tail $\uperp$ for $k\geq\ell$ satisfies
\[
\op{\partial_t^{m-1}\uperp}_{C^0}=o(\ell^{-h+d(r-1)+1/2}).
\]
Let $\nabla^\ell_{\perp} G^k_t(u)$ denote the component of the gradient of $G^k_t(u)$ which is normal to the finite-dimensional subspace $\C^{2\ell+1}\subset\Hil$. Since $\op{\nabla^\ell_\perp G^k_t(\wt{u}^k)}_{C^{m-1}}$ goes to zero uniformly in $k\geq\ell$ as $\ell\to\infty$ by \eqref{equationx}, and since $\wt{u}^k$ satisfies the Floer equation, we obtain that the $s$-derivatives also go to zero uniformly in $k$, so that $\op{\uperp}_{C^{m-1}}$ goes to zero uniformly in $k$ as long as $h>dr> d(r-1)+\frac{1}{2}$.
\end{proof}
Since almost all numbers have $r=2$, generically this bound comes down to $h>2d$. In the case of the Schrödinger equation this means we need $h>4$ and for the wave equation $h>2$.

\section{Completing the proof}
\label{completingproof}
We now complete the proof of the \Cref{mainthm}. This consists of three parts: first, we prove convergence of the sequence (or a subsequence) of Floer curves $(\widetilde{u}^k)_k$ to a solution $\widetilde{u}$ of the Floer equation on the full Hilbert space. This is not immediate, since $\Hil$, or even the support of the nonlinearity in $\Hil$, is not compact, so that we cannot use Gromov-Floer compactness. We will prove this convergence in the $\cloc$-topology, where $m=\floor{h/d}\geq2$. \par
Secondly, we establish the asymptotic properties to conclude that this Floer curve connects the single (trivial) solution of the free Hamiltonian equation, to a (nontrivial) solution of the full Hamiltonian equation.\par 
Finally, we discuss the regularity of the solution. The regularity of the solution we find will, of course, depend on the regularity of the nonlinearity. 
We stress here that the existence of the finite-dimensional Floer curves $\widetilde{u}^k$ for the finite-dimensional nonlinearities $G_t^k$, which make up the sequence $(\widetilde{u}^k)_k$, is proven in \Cref{fdsection}. 
\begin{theorem}
\label{finallemma}
There exists a subsequence of the sequence $(\widetilde{u}^k)_k$ of Floer curves $\widetilde{u}^k:\R\times\R\to\C^{2k+1}$ which $\cloc$-converges to a solution $\widetilde{u}:\R\times\R\to\Hil$ of the Floer equation 
\[
(\partial_s+i\partial_t)\widetilde{u}+\varphi(s)\nabla G_t(\widetilde{u})=0
\]
satisfying $\widetilde{u}(s,t+T)=\phi_{-T}^A\widetilde{u}(s,t)$.
\end{theorem}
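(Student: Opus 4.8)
The plan is to obtain $\wt u$ as a limit of the finite–dimensional Floer curves $\wt u^k$ by a diagonal argument over truncation levels, and then to upgrade the resulting convergence of finite Fourier truncations to genuine $\Hil$-valued convergence using the tail estimate of \Cref{normcompprop2}. First I would fix an exhaustion $K_1\subset K_2\subset\cdots$ of $\R\times\R$ by compact sets. By \Cref{bootstrapping} the curves $\wt u^k$ are bounded in $C^m$ on each $K_i$ uniformly in $k$, with $m=\floor{h/d}\geq2$, so for every truncation level $\ell$ the projections $\wt u^{k,\ell}:=\pi_\ell\wt u^k$ (defined for $k\geq\ell$, where $\pi_\ell$ is the orthogonal projection onto $\C^{2\ell+1}$) form a $C^{m-1}$-precompact family on each $K_i$ by Arzel\`a--Ascoli. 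A standard diagonal extraction over $i$ and over $\ell$ then produces a subsequence, still denoted $(\wt u^{k_j})_j$, together with maps $v^\ell:\R\times\R\to\C^{2\ell+1}$ such that $\wt u^{k_j,\ell}\to v^\ell$ in $\cloc$ for every $\ell$. Since $\pi_\ell\wt u^{k_j,\ell'}=\wt u^{k_j,\ell}$ for $\ell'\geq\ell$ and $\pi_\ell$ is continuous, passing to the limit gives $\pi_\ell v^{\ell'}=v^\ell$, so the $v^\ell$ assemble into a single map $\wt u:\R\times\R\to\prod_{n}\C$ taking values a priori only in formal Fourier series.

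The key step is to show that $\wt u$ is in fact $\Hil$-valued and that $\wt u^{k_j}\to\wt u$ in $\cloc(\R\times\R,\Hil)$. Writing $\wt u^{k_j}=\wt u^{k_j,\ell}+\wt u^{k_j,\ell}_\perp$ and setting $\delta_\ell:=\sup_{k\geq\ell}\op{\wt u^{k,\ell}_\perp}_{C^{m-1}}$, we have $\delta_\ell\to0$ as $\ell\to\infty$ by \Cref{normcompprop2}, and the same bound holds after restricting to any compact $K=K_i$. Since $\wt u^{k_j,\ell}_\perp\to\wt u-v^\ell$ pointwise with $\op{\wt u^{k_j,\ell}_\perp}_{C^{m-1}(K)}\leq\delta_\ell$, we conclude that $\wt u(s,t)\in\Hil$ for all $(s,t)$ and that $\op{\wt u-v^\ell}_{C^{m-1}(K)}\leq\delta_\ell$. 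Given $\varepsilon>0$, pick $\ell$ with $\delta_\ell<\varepsilon/3$; for $j$ large $\op{\wt u^{k_j,\ell}-v^\ell}_{C^{m-1}(K)}<\varepsilon/3$, and the triangle inequality yields $\op{\wt u^{k_j}-\wt u}_{C^{m-1}(K,\Hil)}<\varepsilon$. Hence $\wt u^{k_j}\to\wt u$ in $\cloc(\R\times\R,\Hil)$, and in particular $\wt u$ is a $C^{m-1}$ map into $\Hil$; the decay rates recorded in the proof of \Cref{normcompprop2} moreover place $\wt u$ in $\Hil_{h-d(r-1)-1/2}$, which is the regularity asserted in \Cref{mainthm}.

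Finally I would pass to the limit in the Floer equation and in the periodicity condition. On a fixed compact set the cut-offs $\varphi_{k_j}$ eventually coincide with $\varphi$ (this is compatible with the constraints defining the family $\varphi_\tau$), so it suffices to let $k_j\to\infty$ in $\overline{\partial}\wt u^{k_j}=-\varphi(s)\nabla G_t^{k_j}(\wt u^{k_j})$. By \Cref{lemma4}, $\nabla G^{k_j}\to\nabla G$ uniformly with all $u$-derivatives and with $t$-derivatives up to order $m$; composing with the $\cloc$-convergence $\wt u^{k_j}\to\wt u$ (using $m-1\geq1$ and the uniform $C^1$-dependence of $\nabla G_t$ on $u$) gives $\nabla G_t^{k_j}(\wt u^{k_j})\to\nabla G_t(\wt u)$ in $C^0_{\mathrm{loc}}$, while the left-hand sides converge to $\overline{\partial}\wt u$ in $C^{m-2}_{\mathrm{loc}}\supseteq C^0_{\mathrm{loc}}$ since $m\geq2$. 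Comparing limits yields $\overline{\partial}\wt u+\varphi(s)\nabla G_t(\wt u)=0$. For the periodicity, $\wt u^{k_j}(s,t+T)=\phi_{-T}^A\wt u^{k_j}(s,t)$ passes to the limit because $\phi_{-T}^A$ is a bounded (unitary) operator on $\Hil$ and the convergence is in the $\Hil$-norm.

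The genuine obstacle — already overcome in \Cref{sdp} — is that neither $\Hil$ nor the support of the nonlinearity in $\Hil$ is compact, so the finite-dimensional Gromov--Floer compactness machinery does not apply directly, and the diagonal argument by itself yields only convergence of finite Fourier truncations. Upgrading this to $\cloc(\R\times\R,\Hil)$-convergence rests entirely on the uniform tail decay of \Cref{normcompprop2}, i.e.\ on the Diophantine hypothesis on $aT/2\pi$ being balanced against the regularizing exponent $h>dr$. With those estimates in hand, the remaining difficulty is essentially bookkeeping: keeping the order of quantifiers straight in the three-$\varepsilon$ estimate and checking that the truncated limits $v^\ell$ are mutually compatible so that $\wt u$ is well defined.
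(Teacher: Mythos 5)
Your proposal is correct and follows essentially the same route as the paper: extract $\cloc$-limits of the finite-dimensional truncations $\wt{u}^{k,\ell}$ (via the uniform bounds of \Cref{bootstrapping} and a diagonal argument), then use the uniform tail decay of \Cref{normcompprop2} in a three-$\varepsilon$ estimate to upgrade to $C^{m-1}_{\mathrm{loc}}$-convergence in $\Hil$. Your explicit verification that the limit satisfies the Floer equation and the $\phi_{-T}^A$-periodicity is a welcome addition of detail that the paper leaves implicit, but it is not a different argument.
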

\begin{proof}
By \Cref{normcompprop2} we know that the $C^{m-1}$-norms of $\uperp$ converge to zero as $k$ increases. 
To show that the limit of $(\wt{u}^k)_k$ exists, we start with the observation that there is a  subsequence of $(\widetilde{u}^{k,\ell})_k$ of maps from $\R\times\R$ to $\C^{2\ell+1}$ which $\cloc$-converges to a smooth map $\widetilde{u}^\ell:\R\times\R\to\C^{2\ell+1}$ as $k\to\infty$ for all $\ell$. We stress that the maps $\widetilde{u}^{k,\ell}$ take values in $\C^{2\ell+1}$, so that compactness holds by analogous reasons as for finite-dimensional nonlinearities. In particular, by the bounded support condition in \Cref{admissible}, the maximum principle ensures that the image is in a ball of radius $R_\ell\subset\C^{2\ell+1}$. Because we have locally bounded $W^{m+1,p}$-norms and hence, by elliptic bootstrapping and passing to a diagonal subsequence, local $W^{m,p}$-convergence, by Sobolev embedding we also have local convergence in the $C^{m-1}$-norm.
Passing to a diagonal subsequence yet again, we obtain $\cloc$-convergence for all $\ell$ simultaneously.\par
After restricting to any bounded open subset, we now show that the sequence of maps $(\widetilde{u}^k)_k$ thus obtained is Cauchy in the $C^{m-1}$-norm, which is sufficient to prove $\cloc$-convergence. Let $\epsilon>0$. Then there is an $\ell$ such that $\sup_{k\geq\ell}\left\|\widetilde{u}^{k,\ell}_\perp\right\|_{C^{m-1}}<\epsilon/3$. For this $\ell$, the sequence $(\widetilde{u}^{k,\ell})_k$ converges to $\widetilde{u}^\ell$, so there is $k_0\geq\ell$, so that for $k,k'\geq k_0$ we have $\left\|\widetilde{u}^{k,\ell}-\widetilde{u}^{k',\ell}\right\|_{C^{m-1}}<\epsilon/3$. Hence
\[
\left\|\widetilde{u}^k-\widetilde{u}^{k'}\right\|_{C^{m-1}}\leq\left\|\widetilde{u}^{k,\ell}_\perp\right\|_{C^{m-1}}+\left\|\widetilde{u}^{k,\ell}-\widetilde{u}^{k',\ell}\right\|_{C^{m-1}}+\left\|\widetilde{u}^{k',\ell}_\perp\right\|_{C^{m-1}}<\epsilon.
\]
\end{proof}
Let us now establish the asymptotic behaviour of the Floer curve. Specifically, we have
\begin{theorem}
\label{lemmax}
Using finiteness of energy, the limit Floer curve $\widetilde{u}:\R\times \R\to\Hil$ satisfies the following asymptotic conditions: there exists sequences $s_n^\pm\in\R$ with $s_n^\pm\to\pm\infty$ as $n\to\infty$ such that
\begin{align*}
\lim_{n\to\infty}\widetilde{u}(s_n^-,t)=u_0(t),\qquad\lim_{n\to\infty}\widetilde{u}(s_n^+,t)=u_1(t),
\end{align*}
in the $C^{m-1}$-sense. Here $u_0=0$ is the trivial and only fixed point of the free flow and $u_1$ is a $\phi_T^A$-periodic orbit of $G_t$.
\end{theorem}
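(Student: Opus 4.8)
The plan is to carry out the standard asymptotic analysis of finite-energy Floer cylinders, in the infinite-dimensional setting made available by the a priori bounds of \Cref{bootstrapping} and \Cref{finallemma}; the one genuinely new feature is that the conclusion will hold only along sequences $s_n^\pm$, not as honest limits. First I would check that the limit curve $\wt u$ has finite energy: since $\wt u^k\to\wt u$ in $\cloc$ and $m=\floor{h/d}\geq 2$, in particular $\partial_s\wt u^k\to\partial_s\wt u$ locally uniformly, so for every $R>0$ one has $\int_{[-R,R]\times[0,T]}\abs{\partial_s\wt u}^2=\lim_k\int_{[-R,R]\times[0,T]}\abs{\partial_s\wt u^k}^2\leq 5T\op{F}_{C^0}$, and taking the supremum over $R$ gives $E(\wt u)=\op{\partial_s\wt u}_{L^2(\R\times[0,T])}^2\leq 5T\op{F}_{C^0}<\infty$. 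Consequently $s\mapsto\int_0^T\abs{\partial_s\wt u(s,t)}^2\,dt$ lies in $L^1(\R)$, and I would choose sequences $s_n^\pm\to\pm\infty$ along which $\int_0^T\abs{\partial_s\wt u(s_n^\pm,t)}^2\,dt\to 0$.

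Next I would pass to the translated curves $\wt u_n^\pm(s,t):=\wt u(s+s_n^\pm,t)$, which solve $\overline{\partial}\wt u_n^\pm+\varphi(s+s_n^\pm)\nabla G_t(\wt u_n^\pm)=0$ together with $\wt u_n^\pm(s,t+T)=\phi_{-T}^A\wt u_n^\pm(s,t)$. Since the estimates behind \Cref{bootstrapping} and \Cref{finallemma} are uniform in the $s$-position of the domain, $\wt u$ lies in $W^{m+1,p}_{\mathrm{loc}}$ (equivalently $C^m_{\mathrm{loc}}$) with translation-uniform bounds, so its translates $\wt u_n^\pm$ are $\cloc$-precompact and, after passing to a subsequence, $\cloc$-converge to maps $\wt u_\infty^\pm:\R\times\R\to\Hil$. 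Because $\varphi\equiv 0$ on $(-\infty,-1]$ and $\varphi\to 1$ at $+\infty$, one has $\varphi(s+s_n^-)\to 0$ and $\varphi(s+s_n^+)\to 1$ locally uniformly in $s$; using the continuity of $\nabla G_t:\Hil\to\Hil$ from \Cref{lemma4} and the boundedness of the linear map $\phi_{-T}^A$, I would pass to the limit in the equations to obtain $\overline{\partial}\wt u_\infty^-=0$, respectively $\overline{\partial}\wt u_\infty^++\nabla G_t(\wt u_\infty^+)=0$, each with $\wt u_\infty^\pm(s,t+T)=\phi_{-T}^A\wt u_\infty^\pm(s,t)$.

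Then I would establish $s$-independence and identify the limits. For every $R>0$, $\int_{[-R,R]\times[0,T]}\abs{\partial_s\wt u_\infty^\pm}^2=\lim_n\int_{[s_n^\pm-R,\,s_n^\pm+R]\times[0,T]}\abs{\partial_s\wt u}^2=0$ since $s\mapsto\int_0^T\abs{\partial_s\wt u}^2\,dt$ is integrable over $\R$; hence $\partial_s\wt u_\infty^\pm\equiv 0$, and I write $\wt u_\infty^\pm(s,t)=v^\pm(t)$. For the minus end, $i\partial_t v^-=0$ forces $v^-$ to be constant in $t$, so the periodicity relation gives $v^-=\phi_{-T}^A v^-$, and by the standing hypothesis that $\phi_T^A$ has $0$ as its only fixed point this yields $v^-=0=u_0$. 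For the plus end, $i\partial_t v^++\nabla G_t(v^+)=0$ is exactly $\dot v^+=X_t^G(v^+)$, which together with $v^+(t+T)=\phi_{-T}^A v^+(t)$ says that $u_1:=v^+$ is a $\phi_T^A$-periodic orbit of $G_t$ in the sense of \eqref{phiform}. Finally $\wt u(s_n^\pm,\cdot)=\wt u_n^\pm(0,\cdot)\to v^\pm$ in $C^{m-1}([0,T],\Hil)$, hence (by $\phi_T^A$-periodicity) in the $C^{m-1}$-sense on all of $\R$, which is the claimed asymptotic behaviour.

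The step I expect to be most delicate is not a computation but a structural one: this argument produces convergence only along sequences $s_n^\pm$ and not genuine limits as $s\to\pm\infty$. Upgrading to honest (exponential) convergence would require nondegeneracy of the asymptotic orbits, and it is precisely the asymptotic degeneracy of $\phi_T^A$ — the small divisor problem — that obstructs this; this is why the theorem is phrased with sequences. The only other point requiring care is that the limiting Floer and orbit equations still make sense on the full Hilbert space $\Hil$, so that $\wt u_\infty^\pm$ lands in $\Hil$ and $\nabla G_t$ acts continuously on it, and this is exactly what \Cref{lemma4} provides; the passage of the energy bound to the limit and the vanishing of the energy tails are then routine given \Cref{finallemma} and \Cref{bootstrapping}.
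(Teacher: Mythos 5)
Your route is viable and ends in the right place, but it is not the paper's argument and it has one step that is asserted rather than proved, and it is exactly the step where infinite-dimensionality bites. You claim that translation-uniform $W^{m+1,p}_{\mathrm{loc}}$/$C^m_{\mathrm{loc}}$ bounds make the translated curves $\wt{u}^{\pm}_n$ precompact in $\cloc$. For maps into the infinite-dimensional target $\Hil$, uniform boundedness and equicontinuity of the derivatives up to order $m-1$ do \emph{not} give precompactness: Arzel\`a--Ascoli additionally requires that the values of $\partial^{\alpha}\wt{u}$, $\abs{\alpha}\le m-1$, lie in a relatively compact subset of $\Hil$, and a $C^m$ bound alone never provides this (this failure is the raison d'\^etre of the whole paper). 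The repair is available but must be invoked explicitly: by \Cref{normcompprop2} (passed to the limit curve via \Cref{finallemma}) the tails satisfy $\sup_{s,t}\|\wt{u}^{\ell}_{\perp}\|_{C^{m-1}}\to 0$ as $\ell\to\infty$, while the maximum principle together with \Cref{bootstrapping} confines $\wt{u}^{\ell}$ and its derivatives to bounded subsets of the finite-dimensional spaces $\C^{2\ell+1}$; only the combination places all values in a fixed compact (Hilbert-cube-type) subset of $\Hil$ and justifies the extraction of your limits $\wt{u}^{\pm}_{\infty}$. Citing \Cref{bootstrapping} and \Cref{finallemma} for "uniform bounds" does not by itself cover this.

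Beyond that, your argument is correct but heavier than the paper's. The paper does not form limit cylinders at all: it selects slices $s^{\pm}_{\gamma}$ by the same mean-value/$L^1$ argument you use (applied to $\int_0^T\abs{\partial_t\wt{u}-\varphi(s)X^G_t(\wt{u})}^2dt$, which equals your $\int_0^T\abs{\partial_s\wt{u}}^2dt$), then extracts a $C^{m-1}$-convergent subsequence of the one-dimensional slices $\wt{u}(s^{\pm}_{\gamma},\cdot)$ directly, using precisely the decomposition $\wt{u}=(\wt{u}^{\ell},\wt{u}^{\ell}_{\perp})$ described above, and identifies the limits immediately: since the energy density at the chosen slices tends to zero and $\varphi(s^-_{\gamma})=0$, $\varphi(s^+_{\gamma})=1$, the limits solve $\dot{u}=0$ resp.\ $\dot{u}=X^G_t(u)$ with the $\phi^A_T$-periodicity, whence $u_0=0$ by admissibility of $(T,X)$ and $u_1$ is the sought orbit. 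This bypasses your intermediate steps (passing to the limit in the PDE on a cylinder, proving $\partial_s\wt{u}^{\pm}_{\infty}\equiv 0$), which are correct as you set them up but are only needed because you chose the translation route; your closing remarks on why only sequential (not exponential) convergence can be expected are consistent with the paper.
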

\begin{proof}
Because the energy is bounded in terms of the $C^0$-norm of $G$ (see \Cref{finallemma}), we get
\[
E(\widetilde{u})=\int_{-\infty}^\infty\int_0^T\left|\partial_t\widetilde{u}(s,t)-\varphi(s)X_t^{G}(\widetilde{u}(s,t))\right|^2dt\;ds\leq4T\left\|F\right\|_{C^0}.
\]
Choose sequences $s_\gamma^\pm\in\R$ with $\gamma\leq s_\gamma^+\leq2\gamma$ and $\gamma\leq -s_\gamma^-\leq2\gamma$ such that 
\[
\gamma\int_0^T\left|\partial_t\widetilde{u}(s_\gamma^\pm,t)-\varphi(s_\gamma^\pm)X_t^{G}(\widetilde{u}(s_\gamma^\pm,t))\right|^2dt
\]
is bounded by 
\[
\int_\gamma^{2\gamma}\int_0^T\left|\partial_t\widetilde{u}(s,t)-\varphi(s)X_t^{G}(\widetilde{u}(s,t))\right|^2dt\;ds
\]
or 
\[
\int_{-2\gamma}^{-\gamma}\int_0^T\left|\partial_t\widetilde{u}(s,t)-\varphi(s)X_t^{G}(\widetilde{u}(s,t))\right|^2dt\;ds
\]
respectively. This implies
\[
\int_0^T\left|\partial_t\widetilde{u}(s_\gamma^\pm,t)-\varphi(s_\gamma^\pm)X_t^{G}(\widetilde{u}(s_\gamma^\pm,t))\right|^2dt\leq \frac{4T\left\|F\right\|_{C^0}}{\gamma}\to0\text{ as }\gamma\to\infty.
\] 
Now we write $\widetilde{u}=(\widetilde{u}^{\ell},\widetilde{u}^{\ell}_\perp):\R\times\R\to\C^{2\ell+1}\oplus\Hil/\C^{2\ell+1}$ for $\ell\in\N$. Since by the maximum principle $\widetilde{u}^{\ell}$ takes values in $B_{R_\ell}(0)\subset\C^{2\ell+1}$, after passing to a subsequence we can assume that $\widetilde{u}^{\ell}(s_{\gamma}^\pm,\cdot)$ $C^{m-1}$-converges as $\gamma\to\infty$. After passing to a diagonal subsequence we can assume $\widetilde{u}^{\ell}(s_\gamma^\pm,\cdot)$ $C^{m-1}$-converges for all $\ell$ simultaneously. Since $\|\widetilde{u}^{\ell}_\perp\|_{C^{m-1}}\to0$ by \Cref{normcompprop2} and \Cref{finallemma}, we have that $\widetilde{u}(s_{{\gamma}}^\pm,\cdot)$ $C^{m-1}$-converges, that is
\[
\lim_{\gamma\to\infty}\widetilde{u}(s_\gamma^-,t)=u_0(t),\qquad\lim_{\gamma\to\infty}\widetilde{u}(s_\gamma^+,t)=u_1(t)
\]
which both satisfy the Hamiltonian equation \eqref{phiform}. Because $\varphi(s_\gamma^-)=0$, the solution $u_0(t)$ is the trivial solution. Because $\varphi(s_\gamma^+)=1$ we have indeed found a solution $u_1$ to \eqref{phiform}.
\end{proof}
Pictorially, the limit looks like the breaking
\begin{center}
\begin{tikzpicture}
\draw (0,.5) circle (2cm);
\draw (0,0) node {$\bullet$};
\draw (.25,-.25) node {$u_0$};
\draw (0,0) to[out=45,in=270] (.75,.95);
\draw (0,0) to[out=135,in=270] (-.75,.95);

\draw (0,0) to[out=55,in=270] (.4,.9);
\draw (0,0) to[out=125,in=270] (-.4,.9);

\fill[fill=gray!35!white] (.75,.95) to[out=90,in=0] (0,1.75) to[out=180,in=90] (-.75,.95) plot [smooth,tension=1] coordinates {(-.75,.95) (-.55,.95) (-.4,.9)} to[out=90,in=180] (0,1.35) to[out=0,in=90] (.4,.9) to[out=15,in=185] (.55,.95) to (.75,.95);
\draw (.75,.95) to[out=90,in=0] (0,1.75) to[out=180,in=90] (-.75,.95) plot [smooth,tension=1] coordinates {(-.75,.95) (-0.55,.95) (-.4,.9)} to[out=90,in=180] (0,1.35) to[out=0,in=90] (.4,.9) plot [smooth,tension=1] coordinates {(.4,.9) (.55,.95) (.75,.95)};

\draw [thick] (0,1.75) to[out=-70,in=70] (0,1.35);
\draw (0,1.75) [thick, densely dashed] to[out=-110,in=110] (0,1.35);

\draw (-1,.95) node {$\widetilde{u}$};
\draw (0,1.95) node {$u_1$};

\draw [thick] (0,0) to[out=135,in=270] (-.75,.95) to[out=90,in=180] (0,1.75);
\draw [thick] (0,0) to[out=125,in=270] (-.4,.9) to[out=90,in=180] (0,1.35);
\end{tikzpicture}
\end{center}
Since there is no other fixed point of the free flow than the trivial solution, we indeed find a nontrivial fixed point of the full flow, provided that $\nabla F_t(0)\neq0$. \\\par
%
%
We finish by discussing the regularity of the solution.
\begin{theorem}
\label{regularitythm}
The Floer curve $\wt{u}$, and in particular the $T$-periodic solution $u(t)=\phi_{t}^Au_1(t)$ we obtain from the $\phi_T^A$-periodic solution $u_1(t)$ found in \Cref{lemmax}, is of regularity $h-d(r-1)-\frac{1}{2}>0$ for every $h> dr$, i.e. $\wt{u}:\R\times\R\to\Hil_{h-d(r-1)-1/2}\subset\Hil$. 
\end{theorem}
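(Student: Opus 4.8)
The plan is to observe that the differentiability of $\wt u$ in the $(s,t)$-variables up to order $m-1=\floor{h/d}-1$ is already supplied by the $\cloc$-convergence in \Cref{finallemma}, so that the only new content is the spatial (Hilbert-scale) regularity, namely that for every $(s,t)$ one has $\wt u(s,t)\in\Hil_{h-d(r-1)-1/2}$ with a bound uniform in $(s,t)$; once this is known for $\wt u$, the scale-invariance of $\phi^A_t$ transfers it to $u(t)=\phi^A_t u_1(t)$. The regularity of $\wt u$ itself I would extract from the Fourier-coefficient decay \eqref{decayrateblah} obtained inside the proof of \Cref{normcompprop2}.

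First I would pass \eqref{decayrateblah} to the limit curve. By \Cref{finallemma}, $\wt u^k\to\wt u$ in $\cloc$, so for each fixed $(s,t)$ and each fixed $(p,n)$ the coefficient $w^k_{p,n}(s)=\widehat{\wt u^k(s)}(p,n)$ converges to $w_{p,n}(s):=\widehat{\wt u(s)}(p,n)$. The estimate behind \eqref{decayrateblah} is uniform in $k$, since it comes from \Cref{elementarylemma} applied to the ODE satisfied by $w^k_{p,n}$, using the $k$-independent lower bound \eqref{sdpbound1} for $|\lambda_{p,n}|$ and the $k$-independent decay \eqref{equationx} of the inhomogeneities $f^k_{p,n}$. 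Hence it survives the limit: there is $C>0$ with
\[
\op{w_{p,n}}_{C^0}\,|p|^m\,|n|^{h-d(r-1)}\le C\quad\text{for all }p,n,\qquad\text{and}\qquad\op{w_{p,n}}_{C^0}\,|p|^m\,|n|^{h-d(r-1)}\to0
\]
as $|p|,|n|\to\infty$, with all $C^0$-norms taken in $s\in\R$.

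Next I would sum. For fixed $(s,t)$, writing $\widehat{\wt u(s,t)}(n)=\sum_p w_{p,n}(s)e^{i2\pi pt/T}$ and using $m=\floor{h/d}\ge2$,
\[
\bigl|\widehat{\wt u(s,t)}(n)\bigr|\ \le\ \sum_p\op{w_{p,n}}_{C^0}\ \le\ C\,|n|^{-(h-d(r-1))}\sum_p|p|^{-m}\ \le\ C'\,|n|^{-(h-d(r-1))}
\]
uniformly in $(s,t)$, so that $\sum_n\bigl|\widehat{\wt u(s,t)}(n)\bigr|^2 n^{2\sigma}<\infty$ exactly when $2\sigma-2(h-d(r-1))<-1$, i.e. with critical index $\sigma=h-d(r-1)-\tfrac12$. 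The extra half-derivative lost relative to the pointwise coefficient bound is the usual $\ell^2$-summability (Sobolev) cost; it is precisely the $\tfrac12$ already present in $\op{\partial_t^{m-1}\uperp}_{C^0}=o(\ell^{-h+d(r-1)+1/2})$ from \Cref{normcompprop2}, and the endpoint case is covered by the $o$-refinement above (equivalently by reading off $\|\wt u(s,t)^\ell_\perp\|_{\Hil_0}=o(\ell^{-(h-d(r-1)-1/2)})$ directly). Thus $\wt u(s,t)\in\Hil_{h-d(r-1)-1/2}$ uniformly in $(s,t)$; when $h=\infty$ this holds for every $\sigma$, which together with $C^{m-1}$-regularity in $(s,t)$ for arbitrarily large $m$ yields smoothness in all variables.

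Finally I would transfer to the orbit: by \Cref{lemmax}, $u_1(t)=\lim_{\gamma\to\infty}\wt u(s_\gamma^+,t)$ in $C^{m-1}$, and since the bound above is uniform in $s$ the limit $u_1$ still lies in $\Hil_{h-d(r-1)-1/2}$; as $\phi^A_t$ preserves the Hilbert scale (recalled after \Cref{mainthm}), so does $u(t)=\phi^A_t u_1(t)$. The step I expect to be the main obstacle is the $p$-summation in the presence of the small divisors: one must check that the single near-resonant value of $p$, where $|\lambda_{p,n}|$ can be as small as $\sim n^{-d(r-1)}$, together with the non-resonant tail still produces the clean bound $|\widehat{\wt u(s,t)}(n)|\lesssim|n|^{-(h-d(r-1))}$ (possibly up to a harmless logarithm), and that no more than the half-derivative is lost in the subsequent $n$-sum; everything else is bookkeeping on top of \Cref{finallemma}, \Cref{normcompprop2}, \Cref{elementarylemma} and the scale-invariance of $\phi^A$.
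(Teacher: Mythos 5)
Your proposal is correct and takes essentially the same route as the paper's own proof: pass the decay \eqref{decayrateblah} to the limit curve, sum over $p$ using $m=\floor{h/d}\geq2$, bound $\abs{\wt{u}(s,t)}_{h-d(r-1)-1/2}$ uniformly in $(s,t)$ by the resulting $n$-sum, and transfer the regularity to $u_1$ via the uniform-in-$s$ bound and to $u(t)=\phi^A_tu_1(t)$ via scale-preservation of $\phi^A_t$. Your worry about the near-resonant value of $p$ is already absorbed into \eqref{decayrateblah}, since the bound from \Cref{elementarylemma} uses the worst-case divisor $\sim|n|^{-d(r-1)}$ for every $p$, and the endpoint $n$-summation is handled no more delicately in the paper than in your sketch.
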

\begin{proof}
Let $F_t$ be $A$-admissible. From the proof of \Cref{normcompprop2} equation \eqref{decayrateblah} we know that
the coefficients in the Fourier expansion of the Floer curve $\wt{u}$ satisfy
\[
\abs{\widehat{\wt{u}(s)}(p,n)}|n|^{h-d(r-1)}|p|^m\to0\text{ as }|n|,|p|\to\infty
\]
with $m=\floor{h/d}\geq 2$ uniformly for all $s$, which implies that 
\[
\abs{\wt{u}(s,t)}_{h-d(r-1)-1/2}^2\leq\sum_{n}\pa{\abs{n}^{h-d(r-1)-1/2}\sum_{p}\abs{\widehat{\wt{u}(s)}(p,n)}}^2
\]
is uniformly bounded for all $(s,t)\in\R\times\R$, where we sum over $n\in\Z$, $p\in\Z-an^dT/(2\pi)$. In particular, this holds as we let $s$ go to infinity, so that we obtain the same regularity for the non-trivial solution $u_1$. Subsequently, the solution $u(t)=\phi^A_tu_1(t)$ also has the same regularity since $\phi^A_t$ preserves Hilbert scales.
%
%
\end{proof}

We again stress that for generic time period $T$, the irrationality measure is $r=2$, so that $h>2d$. The regularity of the solution depends on $h$ but also on the specific Hilbert space on which the Hamiltonian PDE is modeled. Let us apply our results to our two examples.
\begin{proposition}
\label{nlwnls3}
Viewing it as a PDE in the variables $s$, $t$ and $x$ with asymptotic conditions, the Floer equation
\[
\overline{\partial}\doubletilde{u}(s,t,x)+A\doubletilde{u}(s,t,x)+\varphi(s)\nabla F_t(\doubletilde{u}(s,t,x))=0
\]
with $A$-admissible nonlinearities admits a strong $(T,X)$-periodic solution
\[
\doubletilde{u}(s,t+T,x)=\doubletilde{u}(s,t,x)=\doubletilde{u}(s,t,x+X),\quad (s,t,x)\in\R\times\R\times\R
\]
for generic $T$ when $h>2\frac{1}{2}$ for the nonlinear wave equation and $h>5$ for the nonlinear Schrödinger equation. 
\end{proposition}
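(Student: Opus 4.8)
The plan is to feed the two examples into the Main Theorem and into the regularity statement \Cref{regularitythm}, and then to convert the abstract Hilbert-scale regularity they produce into honest Sobolev regularity in the spatial variable $x$ — a conversion that must be carried out separately for the two models since they are built on different Hilbert spaces. First I would recall that the nonlinearities of \Cref{nlw2} and \Cref{nls1} are weakly $A$-admissible whenever $(X,T)$ is admissible and $h>dr$, and that applying the cut-off of \Cref{cutoffprop} turns them into $A$-admissible ones; hence the Main Theorem applies, and for a generic time period $T$ (so that $r=2$) it yields a Floer curve $\wt{u}$ and, via $\doubletilde{u}(s,t):=\phi^A_t\wt{u}(s,t)$, a solution $\doubletilde{u}$ of \eqref{floereqn2} which is a $C^{m-1}$ map of $(s,t)$ with values in $\Hil_{h-d-1/2}$, where $m=\floor{h/d}$. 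I read ``strong solution'' of \eqref{floereqn2} as: every term of that equation is a continuous function of $(s,t,x)$, so the equation holds pointwise.

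Next I would treat the nonlinear Schrödinger equation, where $d=2$ and $\Hil=L^2(S^1,\C)$ with $z_n(x)=\sqrt2\,e^{2\pi i nx/X}$, so that the Hilbert scale $(\Hil_\sigma)$ \emph{is} the classical Sobolev scale $H^\sigma(S^1)$. Then $\doubletilde{u}(s,t,\cdot)\in H^{h-5/2}(S^1)$, and since $A=-\partial_x^2$ has order two, $A\doubletilde{u}(s,t,\cdot)\in H^{h-9/2}(S^1)$; by the Sobolev embedding $H^\sigma(S^1)\hookrightarrow C^0(S^1)$ for $\sigma>1/2$, this term — together with $\doubletilde{u}$, $\partial_s\doubletilde{u}$, $\partial_t\doubletilde{u}$, which remain in $\Hil_{h-5/2}$, and the even more regular nonlinear term $\nabla F_t(\doubletilde{u})\in\Hil_h$ — is jointly continuous in $(s,t,x)$ as soon as $h>5$; in that range also $m-1=\floor{h/2}-1\geq 1$, so the $s$- and $t$-derivatives exist classically. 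Hence \eqref{floereqn2} holds pointwise and $\doubletilde{u}$ is a strong solution.

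For the nonlinear wave equation $d=1$, but the Hilbert space $W^{1/2,2}(S^1,\R)\times W^{1/2,2}(S^1,\R)$ makes the Hilbert scale differ from the classical Sobolev scale by a half-order: from $\|\xi_n\|_{W^{1/2,2}}=\sqrt{|n|}$ one checks that $\doubletilde{u}(s,t,\cdot)\in\Hil_\sigma$ forces each component into $H^{\sigma+1/2}(S^1)$. With $\sigma=h-3/2$ this puts the components in $H^{h-1}(S^1)$, and since $A=\Diag(\sqrt{-\partial_x^2},\sqrt{-\partial_x^2})$ has order one, $A\doubletilde{u}(s,t,\cdot)$ has components in $H^{h-2}(S^1)$; the threshold $h-2>1/2$, i.e.\ $h>5/2$ (for which again $m-1=\floor{h}-1\geq 1$), makes every term of \eqref{floereqn2} continuous in $(s,t,x)$, so $\doubletilde{u}$ is once more a strong solution. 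In both cases the asymptotics $\lim_{s\to\pm\infty}\doubletilde{u}(s,\cdot)$ taken along the sequences $s_n^\pm$ of \Cref{lemmax} are strong $(T,X)$-periodic solutions of the respective Hamiltonian PDE, which gives the claim.

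The only genuinely delicate bookkeeping is the passage between the abstract Hilbert scale $(\Hil_\sigma)$ and honest $x$-Sobolev regularity: for NLS the two scales coincide and the estimate is immediate, whereas for NLW the $W^{1/2,2}$-normalisation of the Darboux basis introduces the half-order shift above, and one must also remember that the spatial operator occurring in the NLW Floer equation is $\sqrt{-\partial_x^2}$ (order one), not $\partial_x^2$. Once these two conversions are pinned down, the proposition reduces to comparing $h$ with the Sobolev embedding threshold $\sigma>1/2$.
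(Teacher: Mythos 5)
Your overall route is the same as the paper's: feed the two examples through the Main Theorem and \Cref{regularitythm}, then translate the abstract scale regularity $h-d(r-1)-\tfrac12$ (with $r=2$ for generic $T$) into spatial Sobolev regularity, keeping track of the half-order shift coming from the $W^{\frac12,2}$-normalisation of the NLW Darboux basis and of the fact that the NLW operator is $B=\sqrt{-\partial_x^2}$ of order one. Your threshold bookkeeping reproduces the paper's numbers ($h>2\tfrac12$ for NLW, $h>5$ for NLS), and the treatment of the asymptotics via \Cref{lemmax} is as in the paper.

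There is, however, one genuine mis-step, and it concerns exactly the point the paper is careful about. You assert that $\doubletilde{u}(s,t)=\phi^A_t\wt{u}(s,t)$ is a $C^{m-1}$ map of $(s,t)$ with values in $\Hil_{h-d-1/2}$, and later that $\partial_t\doubletilde{u}$ ``remains in $\Hil_{h-5/2}$'' for NLS. This is false as stated: since $\partial_t\phi^A_t=JA\,\phi^A_t$, one has $\partial_t\doubletilde{u}=\phi^A_t\bigl(\partial_t\wt{u}\bigr)+JA\,\phi^A_t\wt{u}$, and the second term loses $d$ orders of spatial regularity, so $\partial_t\doubletilde{u}$ only lands in $\Hil_{h-d(r-1)-1/2-d}$ (e.g.\ $\Hil_{h-9/2}$ for NLS). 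In particular, $C^{m-1}$-regularity of $\wt{u}$ in $t$ does not by itself give classical $t$-differentiability of $\doubletilde{u}$; one must additionally verify $\doubletilde{u}(s,t)\in\Hil_d=\mathrm{Dom}(A)$, which is why the paper stresses that the result does not follow from elliptic regularity. The gap is repairable with estimates you already have: under your thresholds $h-d(r-1)-\tfrac12\geq d$ holds in both examples, so the chain-rule formula above makes sense, and the loss of $d$ orders still leaves $\partial_t\doubletilde{u}$ (equivalently, every term of the Floer equation) inside a Sobolev space embedding into continuous functions of $x$ for $h>5$ (NLS) and $h>2\tfrac12$ (NLW). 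You should insert this argument explicitly; with it, your proof matches the paper's.
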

\begin{proof}
We define $\doubletilde{u}(s,t):=\phi^A_t\wt{u}(s,t)$, $(s,t)\in\R\times\R$ and subsequently view $\doubletilde{u}$ as a function of $s$, $t$ and $x$. \sloppy Recall that the Hilbert space for the nonlinear wave equation is $\Hil=W^{\frac{1}{2},2}\times W^{\frac{1}{2},2}$. In Hilbert scale notation we have $(\Hil)_k=W^{\frac{1}{2}+k,2}\times W^{\frac{1}{2}+k,2}$. By the Sobolev embedding theorem we have $W^{\frac{1}{2}+k,2}\subset C^{k}$. Since $A$ is of order $1$, we need our solution $\doubletilde{u}$ to be an element of $W^{\frac{1}{2}+1,2}\times W^{\frac{1}{2}+1,2}$ in order for it to be in $C^1\times C^1$. For generic time period $T$ the irrationality measure of $aT/2\pi$ is $r=2$, so for the solution to land in $\Hil_1=W^{\frac{1}{2}+1,2}\times W^{\frac{1}{2}+1,2}\subset C^1\times C^1$ and be a strong solution to the Floer equation, we need $h>2\frac{1}{2}$. 
Then {$\doubletilde{u}=(\doubletilde{\varphi},\doubletilde{\pi}):\R\times\R\times\R\to\R$} satisfies
\[
\kolomtwee{\partial_s\doubletilde{\varphi}-\partial_t\doubletilde{\pi}}{\partial_s\doubletilde{\pi}+\partial_t\doubletilde{\varphi}}=\chi\pa{\abs{\doubletilde{u}}_{-h}}{\varphi}(s)\kolomtwee{B\doubletilde{\varphi}-B^{-1}\partial_1g_t(\doubletilde{\varphi}*\psi)*\psi-B^{-1}c_t}{B\doubletilde{\pi}}
\]
with cut-off function $\chi:[0,R]\to[0,1]$ to make the nonlinearity $A$-admissible. \par
The Hilbert space for the nonlinear Schrödinger equation is $L^2$. We have $(L^2)_k=W^{k,2}$. In order to get a strong solution $\doubletilde{u}$ to the Floer equation with nonlinear Schrödinger type Hamiltonian, we need $\doubletilde{u}$ to be of class $C^2$ in the spatial variable. For generic time period $T$ the irrationality measure of $aT/2\pi$ is $r=2$ and so when the $A$-admissible nonlinearity $F_t$ is $h$-regularizing with $h>5$, we have $\doubletilde{u}(s,t)\in\Hil_{2+\frac{1}{2}}=W^{2+\frac{1}{2},2}\subset C^2$ so that we get a strong solution of
\[
\partial_s\doubletilde{u}+i\partial_t\doubletilde{u}=-\partial_x^2\doubletilde{u}+\chi\pa{\abs{\doubletilde{u}}_{-h}}\varphi(s)\partial_1f\pa{\abs{\doubletilde{u}*\psi}^2,x,t}\pa{\doubletilde{u}*\psi}*\psi.
\]
Since the Floer curve $\wt{u}$ is of class $C^{m-1}$ in the $s$- and $t$-variables, all $s$- and $t$-derivatives of $\doubletilde{u}(s,t):=\phi^A_t\wt{u}(s,t)$ up to order $m-1\geq1$ exist. Note that differentiability in the time coordinate of the $\phi_T^A$-periodic solution itself does not immediately imply differentiability in $t$ for the corresponding $T$-periodic solution of the Floer equation. This is because the $t$-derivative of $\phi_{t}^A$ involves $JA$ which decreases regularity. So our results do \emph{not} follow from elliptic regularity. More specifically, the time derivative of $\doubletilde{u}$ is given by
\[
\frac{d}{dt}\doubletilde{u}=\phi_{t}^A\left(\frac{d}{dt}\wt{u}\right)+\left(\frac{d}{dt}\phi_{t}^A\right)\wt{u}.
\]
Since $\wt{u}$ is of class $C^{m-1}$ in the time variable, the first term is sufficiently regular. For the second term, recall that $\frac{d}{dt}\phi_{t}^A=JA\phi_{t}^A$ and so the second term only changes the regularity of $\wt{u}$ with respect to the space variable by decreasing it by $d$. In particular, the regularity in time coordinate depends on the regularity in the space coordinate. However, since above we gave conditions to ensure that we have enough regularity in the space variable, that is, $\doubletilde{u}(s,t)\in\Hil_d=\text{Dom}(A)$, the time derivatives in the strong sense exist as well. Observe that the regularity requirements stated above ensure that the single $s$-derivative also exists. Finally we remark that by \Cref{lemmax} and \Cref{regularitythm} the asymptotics of the Floer curve have the same $t$- and $x$-regularity as the Floer curve itself.
%
%
\end{proof}

\section{Periodic solutions for Hamiltonian PDEs}
\label{subqsection}
As a corollary to the existence of a fixed point of $\phi_T^H$ for a Hamiltonian with $A$-admissible nonlinearity with, in particular, bounded support in our weaker sense of \Cref{admissible}, we can now prove the existence of a fixed point when the nonlinearity is only weakly $A$-admissible. We want to stress here that we do not claim that these result could not be obtained using different methods and we rather include this as an application of our compactness result. We remark that there has been a significant amount of research on the problem of finding time-periodic solutions of Hamiltonian PDEs, e.g. \cite{breziscoronnirenberg}, \cite{craigwayne}, \cite{kuksin1987}, \cite{wayne1990} and \cite{rabinowitz} to mention just a few; we refer to the comprehensive book \cite{berti} for an overview of the current state in the field. In particular, a KAM result was proven in \cite{kuksin2015kam} and \cite{kuksin2016kam} for the Schrödinger equation with regularizing nonlinearity that we consider. Note that the small divisor problem and regularization also play a key role in their considerations. The existence of time-periodic solutions was proven when the nonlinearity is time-independent or when it has a prescribed time dependence, for example, in \cite{gentileprocesi2008} and \cite{gentileprocesi2009}. We want to stress that we are studying general nonautonomous Hamiltonian PDE without any predescribed time-behaviour of the nonlinearity.\\\par

The idea, now, is that given a weakly $A$-admissible nonlinearity $\widetilde{F}_t$ we compose it with a cut-off function $\chi$ to get an $A$-admissible nonlinearity $F_t$. We then show that when the support of $\chi$ is sufficiently large, the region where a possible $T$-periodic solution could exist stays away from the cut-off region. The \Cref{mainthm} then implies that there exists a periodic solution for the Hamiltonian with this $A$-admissible nonlinearity $F_t$. Since this solution remains in the region where $\chi=1$, that is, where $F_t=\widetilde{F}_t$, we find that the solution is also a solution for the Hamiltonian with weakly $A$-admissible nonlinearity $\widetilde{F}_t$. 
%
%
\begin{lemma}
\label{freeflow}
Let $A$ be admissible and of degree $d$, let $(T,X)$ be admissible and let $h> dr$. Then there exists a positive $c\in\R$ such that
\[
\left|\phi_T^Au-u\right|^2\geq c|u|_{-h}^2.
\]
\end{lemma}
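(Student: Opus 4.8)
The plan is to diagonalize $\phi_T^A$ in the complex eigenbasis $\{z_n\}$ and reduce the estimate to the Diophantine bound \eqref{sdpbound1} that has already been established. Writing $u=\sum_n\hat u(n)z_n$ and using that $JA$ is diagonal with eigenvalues $i\lambda_n=ian^d$, so that $\phi_T^Az_n=e^{iaTn^d}z_n$ (the eigenvalue computation recorded after \Cref{admissibleA}), one has
\[
\abs{\phi_T^Au-u}^2=\sum_{n}\abs{e^{iaTn^d}-1}^2\,\abs{\hat u(n)}^2,
\]
so it suffices to bound the eigenvalue gaps from below, uniformly in $n$, by a fixed multiple of $\abs{n}^{-2h}$. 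Note that $e^{iaTn^d}=1$ only when $\sigma n^d\in\Z$ with $\sigma:=aT/2\pi$; since $(T,X)$ is admissible, $\sigma$ is irrational and this forces $n=0$, a mode on which $\phi_T^A$ acts as the identity and which contributes nothing to either side of the asserted inequality. Hence it is enough to treat $n\neq0$.

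For the elementary ingredient I would use $\abs{e^{i\theta}-1}=2\abs{\sin(\theta/2)}$ together with $\abs{\sin(\pi x)}\geq 2\,\mathrm{dist}(x,\Z)$, which follows from the concavity of $\sin(\pi x)$ on $[0,1]$ (its graph lies above the chord joining the endpoints). With $\theta=aTn^d$, so $\theta/2\pi=\sigma n^d$, this gives $\abs{e^{iaTn^d}-1}\geq 4\,\mathrm{dist}(\sigma n^d,\Z)$. Since $aT/2\pi$ is Diophantine with irrationality measure $r$, there is $c_0>0$ with $\inf_{p\in\Z}\abs{\sigma-p/q}\geq c_0\,q^{-r}$ for every integer $q\geq 1$; taking $q=\abs{n}^d$ — which is exactly the content of \eqref{sdpbound1} — yields
\[
\mathrm{dist}(\sigma n^d,\Z)=\abs{n}^d\inf_{p\in\Z}\abs{\sigma-p/\abs{n}^d}\geq c_0\,\abs{n}^{-d(r-1)}\qquad(n\neq 0).
\]
Therefore $\abs{e^{iaTn^d}-1}^2\geq 16c_0^2\,\abs{n}^{-2d(r-1)}$ for all $n\neq 0$.

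Now I would invoke the hypothesis $h>dr$: since then $d(r-1)=dr-d<h$, we have $\abs{n}^{-2d(r-1)}\geq\abs{n}^{-2h}$ whenever $\abs{n}\geq 1$, hence $\abs{e^{iaTn^d}-1}^2\geq 16c_0^2\,\abs{n}^{-2h}$ for all $n\neq 0$. Summing over $n$ and discarding the (vanishing) $n=0$ contribution gives
\[
\abs{\phi_T^Au-u}^2\geq 16c_0^2\sum_{n\neq 0}\abs{n}^{-2h}\,\abs{\hat u(n)}^2=16c_0^2\,\abs{u}_{-h}^2,
\]
so $c:=16c_0^2>0$ works.

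This is essentially the same small-divisor estimate that drives \Cref{normcompprop2}, only used in reverse, and I do not expect any genuine obstacle beyond bookkeeping: the single point to be careful about is the uniformity in $n$ of the Diophantine constant $c_0$ — precisely what the finite irrationality measure of $aT/2\pi$ provides — together with the harmless behaviour of the zero mode, which is why the right-hand side is stated with the weighted norm $\abs{\cdot}_{-h}$ rather than the full $\Hil$-norm.
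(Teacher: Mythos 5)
Your proof is correct and follows essentially the same route as the paper's: expand $\phi_T^Au-u$ in the eigenbasis $\{z_n\}$, bound each eigenvalue gap $\left|e^{iaTn^d}-1\right|$ from below by $c\,|n|^{-d(r-1)}$ using the Diophantine property of $aT/2\pi$ (the bound \eqref{sdpbound1}), and then use $h>dr>d(r-1)$ to pass to the $|\cdot|_{-h}$-norm. The only cosmetic differences are that you make the paper's ``small angle approximation'' rigorous via $\left|e^{i\theta}-1\right|=2\left|\sin(\theta/2)\right|\geq 4\,\mathrm{dist}(\theta/2\pi,\Z)$, and that you treat the $n=0$ mode explicitly, which the paper glosses over.
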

\begin{proof}
This is a similar occurrence of the small divisor problem as we have already seen:
\[
\left|\phi_T^Au-u\right|^2&=\sum_{n=0}^\infty\left|e^{ia(\pm n)^dT}-1\right|^2\left|\hat{u}(\pm n)\right|^2\\
&\geq c\sum_{n=0}^\infty n^{-2d(r-1)}\left|\hat{u}(\pm n)\right|^2\\
&=c\left|u\right|_{-d(r-1)}^2.
\]
where in the second line we use the small angle approximation and Diophantineness condition to write
\[
\abs{e^{ian^dT}-1}\approx \inf_{p\in\Z}\abs{an^dT-2\pi p} \geq 2\pi\frac{c'}{n^{d(r-1)}}
\]
similar to the proof of \Cref{normcompprop2}. Since $h> dr$ and $|u|_h<|u|_i$ whenever $h<i$, the result follows.
\end{proof}

\begin{theorem}
\label{mainthmsection1}
For a Hamiltonian PDE with weakly $A$-admissible nonlinearity there exists a forced time-periodic solution which is of regularity $h-d(r-1)-\frac{1}{2}$ for $h> dr$, 
that is, $u:\R\to\Hil_{h-d(r-1)-1/2}\subset\Hil$ with
\[
\partial_tu=JAu+J\nabla F_t(u),\qquad u(t+T)=u(t).
\]
\end{theorem}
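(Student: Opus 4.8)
The plan is to deduce the statement from \Cref{mainthm} by the cut-off reduction already sketched before \Cref{freeflow}: approximate the weakly $A$-admissible $\wt F_t$ by an honestly $A$-admissible truncation, invoke \Cref{mainthm} to produce a periodic solution, and then verify that this solution never enters the truncation region, so that it solves the original equation. Concretely, I would fix a smooth cut-off $\chi$ with $\chi\equiv 1$ on $[0,R_1]$, $\supp(\chi)\subseteq[0,R_2]$, $R_1=R_2/2$ and $0\le-\chi'\le 4/R_2$, set $F_t(u):=\chi(|u|_{-h}^2)\wt F_t(u)$ --- which is $A$-admissible by \Cref{cutoffprop} --- and apply \Cref{mainthm} to $G_t:=F_t\circ\phi_t^A$. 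This produces a $\phi_T^A$-periodic orbit $u_1$ of $\dot u=X_t^G(u)$ of regularity $h-d(r-1)-\tfrac12$; equivalently, setting $u(t):=\phi_t^A u_1(t)$, a genuine $T$-periodic solution of $\dot u=JAu+J\nabla F_t(u)$ of the same regularity, since $\phi_t^A$ preserves Hilbert scales. Everything then reduces to showing $|u(t)|_{-h}^2<R_1$ for all $t$, by a bound on $|u(t)|_{-h}$ that does \emph{not} depend on the truncation radius $R_2$.

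For that a priori bound I would proceed as follows. Write $\wt F_t(u)=\widehat F_t(u)+\langle c_t,u\rangle$ with $\widehat F_t$ satisfying conditions 1--3 of \Cref{admissible} and $c_t=c_{t+T}\in\Hil_h$; then $M:=\sup_t(\op{\widehat F_t}_{C^1}+|c_t|_h)<\infty$. On $\{|u|_{-h}^2\le R_1\}$ we have $F_t=\wt F_t$, so there $\op{\nabla F_t(u)}\le M$. On the annulus $\{R_1<|u|_{-h}^2<R_2\}$ the only extra contribution to $\nabla F_t(u)$ is $\chi'(|u|_{-h}^2)\,\nabla(|u|_{-h}^2)\,\wt F_t(u)$, and estimating $|\nabla(|u|_{-h}^2)|_h\le 2|u|_{-h}\le 2\sqrt{R_2}$, $|\wt F_t(u)|\le\op{\widehat F_t}_{C^0}+|c_t|_h\sqrt{R_2}$ and $|\chi'|\le 4/R_2$ shows this contribution stays bounded as $R_2\to\infty$; hence there are $R_0$ and $\bar M$, depending only on $\widehat F$ and $\sup_t|c_t|_h$, with $\op{\nabla F_t}_{C^0}\le\bar M$ for all $R_2\ge R_0$ (and then $\op{\nabla G_t}_{C^0}\le\bar M$ too, as $\phi_t^A$ is unitary). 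Using the periodicity $u_1(T)=\phi_{-T}^A u_1(0)$ together with \Cref{freeflow} applied to $\phi_{-T}^A$, and then integrating the flow,
\[
c\,|u_1(0)|_{-h}^2\le\bigl|\phi_{-T}^Au_1(0)-u_1(0)\bigr|^2=\Bigl|\int_0^T X_t^G(u_1(t))\,dt\Bigr|^2\le(T\bar M)^2,\qquad |u_1(t)|_{-h}\le|u_1(0)|_{-h}+T\bar M,
\]
so $|u_1(t)|_{-h}^2\le K':=(T\bar M)^2\bigl(1+c^{-1/2}\bigr)^2$ for every $t$, with $K'$ independent of $R_2$.

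Finally I would choose $R_2\ge\max(R_0,2K'+1)$, so that $R_1=R_2/2>K'$. Then $|u(t)|_{-h}^2=|u_1(t)|_{-h}^2\le K'<R_1$ for all $t$ ($\phi_t^A$ being unitary), hence $\chi(|u(t)|_{-h}^2)\equiv1$ and $F_t=\wt F_t$ along the orbit, so $u$ solves $\dot u=JAu+J\nabla\wt F_t(u)$ with $u(t+T)=u(t)$, and it inherits the regularity $h-d(r-1)-\tfrac12$ from \Cref{mainthm} and \Cref{regularitythm}. I expect the main obstacle to be exactly the uniform-in-$R_2$ control of $\op{\nabla F_t}_{C^0}$: for a weakly admissible nonlinearity the linear part $\langle c_t,\cdot\rangle$ is genuinely unbounded, so after truncation its gradient on the annulus grows like $\sqrt{R_2}$ and must be absorbed by the $\sim 1/R_2$ decay of $\chi'$; only once $\bar M$, hence $K'$, is known to be $R_2$-independent may one legitimately enlarge $R_1$ past $K'$, and getting this order of quantifiers right is the real content, the rest being a routine assembly of \Cref{mainthm}, \Cref{cutoffprop} and \Cref{freeflow}.
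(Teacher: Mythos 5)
Your proposal is correct and follows essentially the same route as the paper's proof of \Cref{mainthmsection1}: truncate the weakly $A$-admissible nonlinearity via \Cref{cutoffprop}, apply \Cref{mainthm} (and \Cref{regularitythm}) to the truncated Hamiltonian, and combine \Cref{freeflow} with an $R$-independent bound on $\op{\nabla G_t}_{C^0}$ to show the $\phi_T^A$-periodic orbit stays in a fixed $\Hil_{-h}$-ball, so that taking the cut-off radius large enough makes it a solution of the original equation. The only real difference is your cut-off with slope $O(1/R_2)$ over a wide annulus, which yields an explicitly $R_2$-independent bound on the cross term $\chi'\cdot\wt F_t\cdot\nabla(|u|_{-h}^2)$ created by the unbounded linear part $\langle c_t,u\rangle$ --- a point the paper treats with a fixed-slope, width-one cut-off and a brief assertion of $R$-independence.
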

\begin{proof}
Choose a cut-off function $\chi^R:\R_{\geq0}\to[0,1]$ which equals $1$ on $[0,R]$, is $0$ on $[R+1,\infty)$ and has slope $-2\leq(\chi^R)'(r)\leq0$ for $r\in[R,R+1]$. Defining $F_t=F_t^R$ as in \Cref{cutoffprop} using $\chi^R$, it follows that ${F}_t:\Hil_{-h}\to\R$, and hence also when viewed as a map ${F}_t:\Hil\to\R$, has bounded first derivatives, independent of $R$. Here we use that $\widetilde{F}_t$ has bounded first derivatives, even when $c_t\neq 0$ in \Cref{admissible}. Since therefore $G_t$ has bounded first derivatives with respect to $u$, we have $|X_t^G(u)|\leq c'$ for some $c'>0$ which is independent of $R$, and hence $|\phi_T^G(u)-u|\leq c'T$. Since $|\phi_T^Au-u|\geq\sqrt{c}|u|_{-h}$, it follows that $u$ cannot be a fixed point whenever $|u|_{-h}>\frac{c'T}{\sqrt{c}}$: we have that $\phi_T^H=\phi_T^A\circ\phi_T^G$ and $\phi_T^A$ moves any $u$ a distance at least $\sqrt{c}|u|_{-h}$ away, while preserving the $\Hil_{-h}$-norm. 
However, $\phi_T^G$ only moves the point $\phi_{-T}^Au$ a distance at most $c'T<\sqrt{c}|u|_{-h}$, so $u$ cannot be a fixed point. In fact, this shows that the entire $\phi_T^A$-periodic solution $u_1$ stays inside the $\Hil_{-h}$-ball of radius $\frac{c'T}{\sqrt{c}}+\epsilon$ for any $\epsilon>0$. This continues to hold for the $T$-periodic solution $u$ because $\phi^A_t$ preserves the $\Hil_{-h}$-norm. 
%
Now we choose $R=\frac{cT'}{\sqrt{c}}+\epsilon$.
%
For this $A$-admissible nonlinearity the existence of a fixed point follows from the main theorem. By the above argument, this fixed point is also a fixed point of the time-$T$ flow of the Hamiltonian with weakly $A$-admissible nonlinearity $\widetilde{F}_t$ we started with, thus proving the theorem.
\end{proof}
We now show that when the nonlinearity is $h$-regularizing for all $h\in\N$, we find a periodic solution for almost all time periods, since Diophantine numbers have full measure, which is of class $C^\infty$ in both the time and spatial variable.
\begin{corollary}
Consider a Hamiltonian PDE with admissible $A$ and with $\infty$-regularizing $T$-periodic nonlinearity $\wt{F}_t$ with bounded $C^\alpha$-norms as in condition 3 of \Cref{admissible}. Then for admissible $(X,T)$ there exists a strong forced $T$-periodic solution which is smooth in both the time and space coordinate.
\end{corollary}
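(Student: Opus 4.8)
The plan is to deduce the corollary from \Cref{mainthmsection1} by a bootstrapping argument that upgrades the finite regularity produced there to $C^\infty$; the mechanism is the same small-divisor estimate used throughout, now exploited at the level of the full Hilbert scale rather than of a single $\Hil_h$. First I would fix any $h>dr$ and note that an $\infty$-regularizing $\wt F_t$ with bounded $C^\alpha$-norms is in particular $h$-regularizing with bounded $C^\alpha$-norms, hence weakly $A$-admissible (with $c_t\equiv 0$) as soon as $(X,T)$ is admissible. Applying \Cref{mainthmsection1} with this $h$ yields a forced $T$-periodic solution $u$ of $\partial_t u=JAu+J\nabla\wt F_t(u)$ of regularity $h-d(r-1)-\tfrac12$, and — reading off its proof — a $\phi_T^A$-periodic orbit $u_1(t)=\phi_{-t}^A u(t)$ satisfying $\dot u_1=J\nabla\wt G_t(u_1)$ with $\wt G_t=\wt F_t\circ\phi_t^A$ and $u_1(t+T)=\phi_{-T}^A u_1(t)$, which is moreover confined to a fixed $\Hil_{-h}$-ball on which the cut-off $\chi^R$ of \Cref{cutoffprop} used there is identically $1$; in particular $u_1$ solves the genuine (uncut) equation, and since $h>dr\geq 2d$ it is at least $C^1$ in $t$.

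Next I would bootstrap $u_1$ in the space variable. Because $\wt F_t$ is $\infty$-regularizing, $\nabla\wt F_t$ maps $\Hil$ into $\Hil_k$ for every $k$ with, by the argument of \Cref{lemma4}, uniformly bounded norms and all $t$-derivatives under control (the loss of spatial regularity from the $t$-derivatives of $\phi_t^A$, i.e. powers of $JA$, is harmless because $\wt F_t$ extends smoothly to every $\Hil_{-k}$); hence $\nabla\wt G_t(u)=\phi_{-t}^A\nabla\wt F_t(\phi_t^A u)$ takes values in $\Hil_\infty$ and $\int_0^T\dot u_1(t)\,dt\in\Hil_\infty$. Combined with the periodicity twist this gives $(\phi_{-T}^A-1)u_1(0)=\int_0^T\dot u_1(t)\,dt\in\Hil_\infty$. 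Since $aT/2\pi$ is Diophantine, the modes of $\phi_{-T}^A-1$ satisfy $|e^{-iaTn^d}-1|\geq c\langle n\rangle^{-d(r-1)}$ for $n\neq 0$ (the zero mode is a constant in $x$, already smooth), exactly as in \Cref{freeflow}, so this forces $u_1(0)\in\Hil_k$ for every $k$, i.e. $u_1(0)\in\Hil_\infty$. Feeding this back into the ODE $\dot u_1=J\nabla\wt G_t(u_1)$, whose right-hand side preserves $\Hil_\infty$, propagates $u_1(t)\in\Hil_\infty$ for all $t$ and, by the usual ODE regularity bootstrap in $t$, gives $u_1\in C^\infty(\R;\Hil_\infty)$. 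Finally $u(t)=\phi_t^A u_1(t)$ is then smooth from $\R$ into $\Hil_\infty$ because $(t,v)\mapsto\phi_t^A v$ is a smooth flow on $\Hil_\infty$ (generated by $JA\colon\Hil_\infty\to\Hil_\infty$); since $\Hil_\infty$ embeds into $C^\infty$ in $x$, $u$ is a strong $T$-periodic solution, smooth in both $t$ and $x$. (The corresponding Floer curve $\wt u$ is also smooth in $(s,t,x)$: on the region where the cut-off is trivial one reruns \Cref{bootstrapping} and \Cref{normcompprop2} with $m=\floor{h/d}$ replaced by arbitrarily large integers, which is legitimate precisely because $\infty$-regularization lets $h$ be taken as large as one wishes in \Cref{lemma4} and in the decay estimate \eqref{decayrateblah}.)

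The step I expect to be the main obstacle is not a single estimate but the bookkeeping needed to turn ``regularity modulo lower-order terms'' into genuine $\Hil_\infty$-regularity in the presence of the small-divisor degeneracy: one must know, before invoking the Diophantine bound, that $u_1$ genuinely solves the uncut equation — which is why the confinement of $u_1$ furnished by \Cref{freeflow} inside the proof of \Cref{mainthmsection1} is used in an essential way — and one must check that the cut-off introduced in \Cref{cutoffprop}, which is only smooth on the fixed $\Hil_{-h}$ and hence is \emph{not} $\infty$-regularizing, never enters the bootstrap. The resolution, as above, is to construct the solution once at a fixed finite level, confine it, and only then bootstrap with the honest $\infty$-regularizing $\wt F_t$; all the remaining estimates are the finite-$m$ ones of \Cref{lemma4}, \Cref{bootstrapping} and \Cref{normcompprop2} iterated to every order.
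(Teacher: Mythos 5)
Your proposal is correct and takes essentially the same route as the paper: fix a finite $h>dr$, obtain the periodic orbit from the weakly $A$-admissible theorem, use the confinement so that the honest $\infty$-regularizing nonlinearity governs the orbit, then a posteriori combine $\partial_t u_1\in\Hil_\infty$ with the Diophantine lower bound $|e^{iaTn^d}-1|\geq c\,n^{-d(r-1)}$ to upgrade to $\Hil_\infty$ spatial regularity, and finally bootstrap time regularity by repeatedly differentiating the Hamiltonian equation. The only cosmetic difference is that you apply the small-divisor estimate to $u_1(0)$ and propagate $\Hil_\infty$-regularity along the flow, whereas the paper applies the mode-wise bound to $\phi^A_T u(t)-u(t)$ at every $t$; the mechanism is identical.
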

\begin{proof}
This does not follow immediately from \Cref{mainthmsection1}, since there is no complete norm on $\Hil_{-\infty}$. In order to prove that we still find a periodic orbit for the Hamiltonian PDE with $\infty$-regularizing weakly $A$-admissible nonlinearity $\wt{G}_t$, which is even smooth in both the time and space variable, compose $\wt{G}_t$ as above with a cut-off function $\chi(|\cdot|_{-h})$ for any finite $h> dr$ to obtain an $h$-regularizing $A$-admissible nonlinearity. Applying the above result we find a periodic solution $u(t)\in\Hil_{h-d(r-1)-1/2}$. Since it is a solution to the PDE with $\infty$-regularizing weakly $A$-admissible nonlinearity we started with, we can a posteriori show that $u(t)$ has image in $\Hil_\infty$ and that it is smooth with respect to $t$: First, since $\wt{G}_t$ is $\infty$-regularizing, by definition its gradient takes values in $\Hil_\infty$ so that
\[
\partial_tu=J\nabla\wt{G}_t(u)\in\Hil_\infty;\qquad u(t+T)=\phi^A_{-T}u(t),
\]
that is, the Fourier coefficients of $\partial_tu(t)$ decay exponentially fast, which in turn shows that the Fourier coefficients of $\phi^A_Tu(t)-u(t)$ have the same decay rate. Now the $n$-th component $u_n(t)=\widehat{u(t)}(n)\cdot z_n$ of $u(t)=\sum_n \widehat{u(t)}(n)\cdot z_n$ satisfies
\[
\abs{\phi^A_Tu_n(t)-u_n(t)}&=\abs{e^{iaTn^d}-1}|u_n(t)|\\
&\approx\inf_{p\in\Z}\abs{aTn^d-2\pi p}|u_n(t)|\\
&\geq\frac{c}{n^{d(r-1)}}|u_n(t)|
\]
so that $|u_n(t)|$ still decays exponentially fast with $|n|\in\N$, that is, $u(t)\in\Hil_\infty$ for all $t$. It remains to show that $t\mapsto u(t)$ is of class $C^\infty$, for which we again use the fact that it satisfies the Hamiltonian equation. Applying $\partial_t$ to both sides of $\partial_tu=X^G_t(u)$ and observing that $G$ is smooth in $t$ and both $u(t)$ and $\partial_tu(t)$ are in $\Hil_\infty$, we see that $\partial_t^2u(t)\in\Hil_\infty$. By repeatedly applying $\partial_t$ to both sides of the equation, it follows that $\partial_t^\alpha u(t)\in\Hil_\infty$ for all $\alpha\in\N$. 
\end{proof}

Recalling the fact that not only Diophantine numbers have full measure, but even those numbers with irrationality measure $r=2$, for generic $T$ 
we need $h> 2d$ and so
\begin{corollary}
Consider a Hamiltonian PDE with admissible $A$ and with $h$-regularizing time-periodic nonlinearity with bounded $C^\alpha$-norms as in condition $3$ of \Cref{admissible}. Then for generic time period $T$, there exists a (weak) forced $T$-periodic solution which is of regularity $h-d-\frac{1}{2}$ for $h>2d$. 
\end{corollary}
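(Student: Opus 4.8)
The plan is to derive this statement as an immediate specialization of \Cref{mainthmsection1} to a generic time period, so almost no new work is required. First I would recall that, by the convention fixed after \Cref{admissibleXT}, a \emph{generic} time period $T$ is one for which the number $aT/2\pi$ has irrationality measure $r=2$; by \cite[theorem E.3]{bugeaud} the set of such $T$ has full Lebesgue measure, and any such number is Diophantine, so the pair $(T,X)$ is admissible in the sense of \Cref{admissibleXT}. Consequently the hypothesis $h>2d$ in the corollary is precisely the hypothesis $h>dr$ of \Cref{admissible} specialized to $r=2$.

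Next I would observe that a nonlinearity $F_t$ which is $T$-periodic, $h$-regularizing with $h>2d$, and has bounded $C^\alpha$-norms as in condition $3$ of \Cref{admissible} is automatically \emph{weakly $A$-admissible}: one simply takes $c_t\equiv 0$ in the definition, so that conditions $1$, $2$ and $3$ hold for $F_t$ itself (no bounded-support condition $4$ is imposed in the weak notion). Hence \Cref{mainthmsection1} applies verbatim and yields a forced $T$-periodic solution $u:\R\to\Hil$ of $\partial_t u=JAu+J\nabla F_t(u)$ with $u(t+T)=u(t)$, of regularity $h-d(r-1)-\frac{1}{2}$.

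Finally I would substitute $r=2$ into the regularity exponent to get $h-d(r-1)-\frac{1}{2}=h-d-\frac{1}{2}$, which for $h>2d$ is strictly larger than $d-\frac{1}{2}>0$, so the solution lands in $\Hil_{h-d-1/2}\subset\Hil$ as claimed. As in \Cref{regularitythm} the qualifier ``weak'' reflects only that $h-d-\frac12$ need not exceed the threshold guaranteeing a classical solution; for the two concrete PDEs one may instead invoke the bookkeeping of \Cref{nlwnls3}.

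I expect no substantive obstacle: all the analytic content already lives in \Cref{mainthm} and \Cref{mainthmsection1}, and this corollary merely records the numerically cleanest consequence valid for almost every time period. The only points needing (minor) care are verifying that ``generic $T$'' indeed produces an admissible pair $(T,X)$ and that condition $4$ of \Cref{admissible} plays no role here — which is exactly why the argument is routed through the weakly $A$-admissible formulation of \Cref{mainthmsection1} rather than directly through \Cref{mainthm}.
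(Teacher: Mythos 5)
Your proposal is correct and matches the paper's intent exactly: the corollary is stated as an immediate consequence of \Cref{mainthmsection1}, obtained by noting that generic $T$ means $aT/2\pi$ has irrationality measure $r=2$ (so $(T,X)$ is admissible and $h>dr$ becomes $h>2d$), that conditions 1--3 with $c_t\equiv 0$ make the nonlinearity weakly $A$-admissible, and by substituting $r=2$ into the regularity exponent $h-d(r-1)-\tfrac12$. No genuinely different route or missing step to report.
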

In particular, for our examples the main theorem provides us with the following results. Here we use the result from \Cref{nlwnls3} combined with \Cref{mainthmsection1}.
\begin{corollary}
The nonlinear wave equation
\[
\ddot{\varphi}-\varphi_{xx}-\partial_1g_t(\varphi*\psi,x)*\psi-c_t=0,\qquad \varphi=\varphi(t,x)=\varphi(t,x+X),\; x\in S^1=\R/X\Z
\]
with $\psi,c_t=c_{t+T}\in C^h$ and $g_{t+T}=g_t$ being bounded and having bounded derivatives, admits a strong $T$-periodic solution for generic $T$, provided that $h>3\frac{1}{2}$. When $\psi,c_t=c_{t+T}\in C^\infty$, the solution is smooth in both time and space coordinate.
\end{corollary}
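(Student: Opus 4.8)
The plan is to match the nonlinear wave equation to the abstract framework and then apply \Cref{mainthmsection1}, keeping careful track of the Sobolev exponents exactly as in \Cref{nlwnls3}. First I would recall from \Cref{nlw2} that NLW is a Hamiltonian PDE on $\Hil=W^{\frac12,2}(S^1,\R)\times W^{\frac12,2}(S^1,\R)$ with differential operator $A=\Diag(B,B)$, $B=\sqrt{-\partial_x^2}$, which is admissible of degree $d=1$ with $a=2\pi/X$, so that $aT/2\pi=T/X$; for generic $T$ this number has irrationality measure $r=2$, hence $(T,X)$ is admissible and $dr=2$.

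Next I would verify that the nonlinearity $\widetilde F_t(\varphi,\pi)=\frac{1}{2\pi}\int_0^X(g_t(\varphi*\psi,x)+c_t\varphi)\,dx$ is weakly $A$-admissible whenever $h>dr=2$. The exterior-potential term is the linear functional $\frac{1}{2\pi}\int_0^X c_t\varphi\,dx=\langle(B^{-1}c_t,0),(\varphi,\pi)\rangle$ for the inner product of \Cref{nlw2}, and since $c_t\in C^h$ gives $B^{-1}c_t\in C^{h+1}\hookrightarrow W^{h+\frac12,2}$, we have $(B^{-1}c_t,0)\in\Hil_h$, so this term may be subtracted off as permitted by \Cref{admissible}. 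For the remaining term, $\psi\in C^h$ makes convolution $*\psi\colon\Hil_{-h}\to C^0$ continuous, whence $x\mapsto\partial_1^\alpha g_t((\varphi*\psi)(x),x)$ is continuous and, by boundedness of the derivatives of $g_t$, uniformly bounded in $\varphi$; this is exactly the computation already carried out in the example following \Cref{cutoffprop}, and it shows that $u\mapsto\frac{1}{2\pi}\int_0^X g_t(\varphi*\psi,x)\,dx$ is $h$-regularizing with bounded $C^\alpha$-norms. Hence $\widetilde F_t$ is weakly $A$-admissible, and \Cref{mainthmsection1} produces a forced $T$-periodic solution $u=(\varphi,\pi)\colon\R\to\Hil_{h-d(r-1)-1/2}=\Hil_{h-3/2}$ of the first-order system $\partial_t u=JAu+J\nabla F_t(u)$.

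Finally I would pass from this first-order solution back to the genuine second-order wave equation, and this is where the assumption $h>3\frac{1}{2}$ enters, stronger than the $h>2\frac{1}{2}$ of \Cref{nlwnls3} which only yields a strong solution of the \emph{first-order} Floer system: for generic $T$ one has $h-1>\frac{5}{2}$, so by Sobolev embedding on $S^1$ the $\varphi$-component lies in $W^{h-1,2}(S^1)\subset C^2(S^1)$ and $\varphi_{xx}$ exists classically, while $m-1=\floor{h/d}-1\ge 2$ gives $C^2$-regularity in $t$ --- where, just as in \Cref{nlwnls3}, the strong $t$-derivatives must be extracted from the Hamiltonian equation itself, since $\partial_t\phi_t^A=JA\phi_t^A$ loses one spatial derivative, and the spatial regularity just obtained (which puts $\doubletilde u$ well inside $\text{Dom}(A^2)$) is precisely what makes this legitimate. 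Eliminating $\pi=-B^{-1}\dot\varphi$ from $\dot\varphi=-B\pi$ and substituting into $\dot\pi=B\varphi-B^{-1}\partial_1g_t(\varphi*\psi)*\psi-B^{-1}c_t$ then yields $\ddot\varphi-\varphi_{xx}-\partial_1g_t(\varphi*\psi)*\psi-c_t=0$ in the strong sense. For the last assertion, if $\psi,c_t\in C^\infty$ and $g_t$ is smooth with all derivatives bounded, then $\widetilde F_t$ is $\infty$-regularizing, and the corollary on $\infty$-regularizing nonlinearities following \Cref{mainthmsection1} upgrades the solution to one that is $C^\infty$ in both $t$ and $x$. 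The step I expect to be the main obstacle is this last one: tracking exactly which regularity survives the passage between the first- and second-order formulations and the $t$-bootstrap, so as to see that the correct threshold is $h>3\frac{1}{2}$ and not $h>2\frac{1}{2}$.
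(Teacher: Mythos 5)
Your proposal is correct and follows essentially the same route as the paper: weak $A$-admissibility of the NLW nonlinearity (with the $c_t$-term absorbed as the linear part allowed in \Cref{admissible}), existence and regularity $h-d(r-1)-\tfrac12=h-\tfrac32$ from \Cref{mainthmsection1} with $d=1$, $r=2$, and then the observation that the genuine second-order equation needs two spatial derivatives, forcing $h-\tfrac32>2$, i.e.\ $h>3\tfrac12$, exactly as the paper deduces from \Cref{regularitythm} and the proof of \Cref{nlwnls3}. Your handling of the time-derivative bootstrap via the Hamiltonian equation and of the $\infty$-regularizing case also matches the paper's argument.
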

The fact that $h>3$ suffices follows from \Cref{regularitythm} and the proof of \Cref{nlwnls3}, together with the observation that we need two spatial derivatives. \par
In order to see that one can only expect to find a periodic solution for generic $T$ for $h>0$ large enough, we emphasize that this can even be seen from a direct computation using Fourier series in the case when $g_t=0$.
\begin{remark}
Expanding $\varphi(t,x)$ and $c(t,x)=c_t(x)$ in terms of a Fourier series as in \Cref{counterexample}, it follows that the resulting Fourier coefficients satisfy the equation $$\pa{\frac{2\pi n}{X}-\frac{2\pi p}{T}}\pa{\frac{2\pi n}{X}+\frac{2\pi p}{T}}\hat{\varphi}(p,n)=\hat{c}(p,n).$$ Since for any subsequence $(p',n')\subset (p,n)_{p,n\in\Z}$ each of the two factors can only converge to zero like $n^{1-r}$ (and only one of the two factors is close to zero), it follows that for $c\in W^{h+\frac{1}{2},2}$, that is, $c=(c,0)\in\Hil_h$, we find a solution $\varphi\in W^{h+\frac{1}{2}-d(r-1),2}$, that is, $u=(\varphi,\pi)\in\Hil_{h-d(r-1)}\subset\Hil$ with $d=1$, provided that $h>dr$ . On the other hand, it also follows that we cannot expect to find a solution of higher regularity.
\end{remark}
Now let us turn to the nonlinear Schr\"odinger equation. 
\begin{corollary}
The nonlinear Schrödinger equation
\[
i\dot{u}+u_{xx}+\partial_1f_t\left(\vert u*\psi\vert^2,x\right)(u*\psi)*\psi=0,\qquad u=u(t,x)=u(t,x+X),\; x\in S^1=\R/X\Z
\]
with $\psi\in C^h$ and $\widetilde{f}_{t+T}=\widetilde{f}_t:(s,x)\mapsto f_t(|s|^2,x)$ being bounded and having bounded derivatives, admits a strong $T$-periodic solution for generic $T$, provided that $h>5$. When $\psi\in C^\infty$, the solution is smooth in both time and space coordinate.
\end{corollary}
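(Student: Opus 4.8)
The plan is to obtain this corollary as the specialization of \Cref{mainthmsection1} and \Cref{nlwnls3} to the concrete NLS setup of \Cref{nls1}--\Cref{nls2}, where the free operator is $A=-\partial_x^2$ on $\Hil=L^2(S^1,\C)$ with eigenvalues $\lambda_n=an^2$, $a=(2\pi/X)^2$, hence admissible of degree $d=2$.

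First I would record that the nonlinearity $F_t(u)=\tfrac12\int_0^X f_t(|u*\psi|^2,x)\,dx$ is weakly $A$-admissible whenever $(X,T)$ is admissible and $h>dr=2r$; this is exactly the example following \Cref{cutoffprop}. Indeed, $\psi\in C^h$ forces $u*\psi\in C^0$ for every $u\in\Hil_{-h}$, and since $\widetilde f_t$ and all its first-slot derivatives are bounded, the maps $x\mapsto\partial_1^\alpha\widetilde f_t((u*\psi)(x),x)$ are continuous and uniformly bounded, so $F_t$ extends to a smooth map $\Hil_{-h}\to\R$ with bounded $C^\alpha$-norms (here, unlike for NLW, no linear term has to be subtracted, so $F_t$ is even $A$-admissible after composing with a cut-off as in \Cref{cutoffprop}). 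For generic $T$ we have $r=2$, so $h>5$ in particular gives $h>4=dr$, and \Cref{mainthmsection1} produces a forced $T$-periodic solution $u:\R\to\Hil_{h-d(r-1)-1/2}=\Hil_{h-5/2}$ of $\partial_tu=JAu+J\nabla F_t(u)$.

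Next I would upgrade this weak solution to a strong one, which is where the threshold $h>5$ enters. Since $(\Hil)_k=W^{k,2}(S^1,\C)$ for the NLS phase space, $u(t)$ lies in $W^{h-5/2,2}(S^1,\C)$, and for $h>5$ one has $h-5/2>5/2$; the one-dimensional Sobolev embedding $W^{s,2}(S^1)\hookrightarrow C^2(S^1)$, valid for $s>5/2$, then gives $u(t)\in C^2(S^1,\C)$ and $u(t)\in\Hil_2=\text{Dom}(A)$, so that $u_{xx}$ exists classically. The one genuinely nontrivial point --- as already flagged in the proof of \Cref{nlwnls3} --- is time regularity, which does not follow from elliptic regularity alone: for the associated Floer-equation solution $\doubletilde u(s,t)=\phi_t^A\wt u(s,t)$ the chain rule produces the term $(\partial_t\phi_t^A)\wt u=JA\,\phi_t^A\wt u$, which costs $d=2$ spatial derivatives; but since $h-5/2\geq 2$ we still have $\doubletilde u(s,t)\in\Hil_2=\text{Dom}(A)$, so $\partial_t\doubletilde u\in\Hil$ exists pointwise, and the Floer curve is moreover $C^{m-1}$ in $s$ and $t$ with $m=\floor{h/d}=\floor{h/2}\geq 2$. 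Finally, when $\psi\in C^\infty$ the same estimate makes $F_t$ an $\infty$-regularizing weakly $A$-admissible nonlinearity, so the corollary on $\infty$-regularizing nonlinearities applies verbatim and upgrades the solution to one that is $C^\infty$ in both $t$ and $x$.

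The main obstacle is not a single hard estimate --- every ingredient has already been proven --- but the simultaneous bookkeeping of two independent regularity losses: the Diophantine loss $d(r-1)+\tfrac12=\tfrac52$ coming from the small divisor problem (\Cref{normcompprop2}, \Cref{regularitythm}), and the order $d=2$ of the Schrödinger operator, which must be absorbed both in the spatial requirement $u(t)\in C^2$ and in the $\partial_t\phi_t^A$-term above. Balancing these two budgets, together with the strictness of the embedding $W^{s,2}(S^1)\hookrightarrow C^2(S^1)$, is exactly what forces $h>5$ and not merely $h\geq 5$.
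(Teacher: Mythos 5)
Your argument is exactly the paper's route: the corollary is obtained by combining \Cref{mainthmsection1} with the NLS part of \Cref{nlwnls3}, after checking (as in the example following \Cref{cutoffprop}) that the convolution nonlinearity is weakly $A$-admissible with $d=2$ and, for generic $T$, $r=2$, so that the solution lands in $\Hil_{h-5/2}\subset W^{5/2+\varepsilon,2}\subset C^2=\mathrm{Dom}$-regularity needed for a strong solution, with the $\infty$-regularizing corollary handling the smooth case. Your bookkeeping matches the paper's (and is in fact slightly more careful about the strictness of the Sobolev embedding, which is why $h>5$ rather than $h\geq 5$), so the proposal is correct and essentially identical in approach.
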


\begin{remark}
One could alternatively think about the admissibility condition for the periods $(X,T)$ as a condition on $X$: one could fix a time period $T$, so that for generic $X$ the number $aT/2\pi$ is Diophantine (with $r=2$). Since $a=(2\pi/X)^d$ in the two main examples, this means that for fixed $T$, the space period $X$ should be such that ${(2\pi)^{d-1}T}{X^{-d}}$ is Diophantine (with $r=2$). We stress the Diophantineness condition (with $r=2$) can explicitly be checked for any chosen pair $(X,T)$.
\end{remark}

\section{A cup-length estimate}
\label{cuplengthsection}
While our ultimate goal is to develop a full Floer homology theory for Hamiltonian PDEs with regularizing nonlinearities, we already give an example of a result which definitely needs pseudoholomorphic curve techniques and cannot be proven using more classical techniques such as in \cite{rabinowitz}: we consider the classical result by Schwarz \cite{Schwarz2} and use our results to prove a cup-length estimate for a Hamiltonian system on a phase space which is the product of linear symplectic Hilbert space with a closed symplectic manifold. \par
Let $M=(M,\omega_M)$ be a closed (finite-dimensional) symplectic manifold with vanishing second homotopy group, $\pi_2(M)=\{0\}$. Then $\widetilde{M}:=M\times\Hil$ is an infinite-dimensional symplectic Hilbert manifold equipped with the product symplectic form $\omega=\pi_M^*\omega_M+\pi_{\Hil}^*\omega_{\Hil}$ and with a scale structure given by $\widetilde{M}_h:=M\times\Hil_h$, $h\in\R$. Here $\pi_M:\widetilde{M}_h\to M$, $\pi_{\Hil}:\widetilde{M}_h\to\Hil_h$ denote the projection onto the first or second factor, respectively. \par 

Note that infinite-dimensional phase spaces of this form appear when performing symplectic reduction using a Hamiltonian action on $\Hil$ which is non-trivial only on finitely many components. Alternatively, they arise in Hamiltonian systems incorporating both Hamiltonian mechanics and Hamiltonian field theory. Indeed, generalizing the class of Hamiltonian particle-field systems that we introduce in \cite{FabertLamoree21}, consider a symplectic manifold $(B,\omega_B)$ with a foliation by Lagrangian submanifolds, which contains $(M,\omega_M)$ as a symplectic submanifold, as well as a symplectic vector bundle $E\to B$ over $B=(B,\omega_B)$. Let $\Hil=(\Hil,\omega_{\Hil})$ denote a symplectic Hilbert space of sections in this bundle which are constant along leaves, where the symplectic bilinear form $\omega_{\Hil}$ on $\Hil$ is defined using the symplectic structures on the fibres. Now consider time-periodic Hamiltonians $$H_t=H^A+F_t:M\times\Hil\to\R\,\,\textrm{with}\,\,F_t(u_M,u_{\Hil})=f_t(u_M,u_{\Hil}^\rho(u_M)),$$ where $u_{\Hil}\mapsto u_{\Hil}^\rho$ denotes a smoothing operator $\Hil_{s-h}\to\Hil_s$ for all $s\in\R$. Note that this indeed generalizes the class of time-periodic particle-field Hamiltonian systems in \cite{FabertLamoree21}, which model the interaction of a scalar wave field on the $d$-dimensional torus $T^d$ with a particle constrained to a submanifold $Q\subset T^d$: Here $M=T^*Q\subset T^*T^d=B$, $E=B\times\C$ and $\Hil=H^{\frac{1}{2}}(T^d,\C)$ can be viewed as a space of sections in the trivial bundle that are constant along leaves of the canonical Lagrangian foliation on $T^*T^d$ given by the cotangent fibres. Furthermore, the smoothing operator is given by convolution with a $C^h$-function $\rho$ which models the charge distribution of the particle. By contrast, recall that in this paper we consider the case where $(M,\omega_M)$ is closed. 

\begin{definition}
A map $F_t:\widetilde{M}\to\R$ is called $h$-\textup{regularizing} if it extends to a smooth map
\[
F_t:\widetilde{M}_{-h}\to\R,
\]
and it is called $\infty$-regularizing when it is $h$-regularizing for all $h\in\N$. 
\end{definition} 
With this we again define 
\begin{definition}
A nonlinearity $F_t:\widetilde{M}\to\R$ is called \textup{$A$-admissible} if it satisfies the following conditions:
\begin{enumerate}
\item $F_t$ is $T$-periodic with $(T,X)$ admissible. 
\item The nonlinearity is $h$-regularizing with $h> dr$. Here $r$ is the irrationality measure of $aT/2\pi$ and $d$ the order of the differential operator $A$. 
\item The extended map $F_t:\widetilde{M}_{-h}\to\R$ has bounded $C^\alpha$-norms for all $\alpha$.
\item $F_t$ has bounded support, in the sense that for every $k\in\N$ there exists $R_k>0$ such that $F_t(u)=0$ for all $u\in\widetilde{M}$ with $|(\pi_k\circ\pi_{\Hil})(u)|>R_k$.
\end{enumerate}
$F_t$ is called \emph{weakly $A$-admissible} when there exists $t$-dependent $c_t=c_{t+T}\in\Hil_h$ such that $u\mapsto F_t(u)-\langle c_t,\pi_{\Hil}(u)\rangle$ satisfies 1., 2., and 3. 
\end{definition}
Again we find 
\begin{proposition}
Let $\widetilde{F}_t:\widetilde{M}\to\R$ be a weakly $A$-admissible nonlinearity. Then 
\[
F_t(u):=\chi(|\pi_{\Hil}(u)|_{-h}^2)\widetilde{F}_t(u)
\]
with $h$ as in \Cref{admissible} condition 2, and where $\chi$ a smooth cut-off function with $\supp(\chi)\subseteq[0,R]$ for some $R>0$, is $A$-admissible.
\end{proposition}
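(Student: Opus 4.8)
The plan is to follow the proof of \Cref{cutoffprop} essentially verbatim; the whole argument should be routine, the only thing to monitor being that the extra finite-dimensional factor $M$ never causes trouble — it is compact, and neither the cut-off $\chi$ nor the Hilbert scale $\widetilde{M}_h = M\times\Hil_h$ sees the $M$-component. So I expect there to be no genuine obstacle, merely a uniformity check over the compact base.

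First I would note that condition 1 is immediate, since $F_t$ inherits $T$-periodicity from $\widetilde{F}_t$ and the pair $(T,X)$ is unchanged. For conditions 2 and 3 I would use the $c_t = c_{t+T}\in\Hil_h$ supplied by weak $A$-admissibility and split
\begin{align*}
F_t(u) = \chi\bigl(|\pi_{\Hil}(u)|_{-h}^2\bigr)\,\bigl(\widetilde{F}_t(u) - \langle c_t,\pi_{\Hil}(u)\rangle\bigr) \;+\; \chi\bigl(|\pi_{\Hil}(u)|_{-h}^2\bigr)\,\langle c_t,\pi_{\Hil}(u)\rangle .
\end{align*}
The first summand extends smoothly to $\widetilde{M}_{-h}=M\times\Hil_{-h}$ with bounded $C^\alpha$-norms for every $\alpha$: by weak admissibility $\widetilde{F}_t-\langle c_t,\pi_{\Hil}(\cdot)\rangle$ already has this property, the scalar function $w\mapsto\chi(|w|_{-h}^2)$ is smooth on $\Hil_{-h}$ with compact support and derivatives bounded of every order, and, $M$ being compact, the Leibniz estimates for the product are uniform in the $M$-variable. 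For the second summand I would argue exactly as in \Cref{cutoffprop}: from $|\langle c_t,\pi_{\Hil}(u)\rangle|\le|c_t|_h\,|\pi_{\Hil}(u)|_{-h}$ and $\supp(\chi)\subseteq[0,R]$ one sees that $u\mapsto\chi(|\pi_{\Hil}(u)|_{-h}^2)\langle c_t,\pi_{\Hil}(u)\rangle$ is a bounded smooth map $\widetilde{M}_{-h}\to\R$ whose higher derivatives are again bounded, uniformly over $M$.

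For condition 4 I would reuse the computation from \Cref{cutoffprop}. Put $R_k:=k^h R^{1/2}$. If $u\in\widetilde{M}$ satisfies $|(\pi_k\circ\pi_{\Hil})(u)|_0>R_k$, then, writing $\hat{u}(n)$ for the Fourier coefficients of $\pi_{\Hil}(u)$,
\begin{align*}
|\pi_{\Hil}(u)|_{-h}^2 \ge |(\pi_k\circ\pi_{\Hil})(u)|_{-h}^2 = \sum_{n=0}^{k}|\hat{u}(\pm n)|^2 n^{-2h} > k^{-2h}\sum_{n=0}^{k}|\hat{u}(\pm n)|^2 > R ,
\end{align*}
so $\chi(|\pi_{\Hil}(u)|_{-h}^2)=0$ and hence $F_t(u)=0$. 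The only point that needs a word of care — and, as said, it is not really an obstacle — is the uniformity over $M$ in condition 3; this is immediate from compactness of $M$, and no small-divisor phenomenon enters because the cut-off and the scale structure involve only the $\Hil$-factor.
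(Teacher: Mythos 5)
Your argument is correct and matches the paper's approach: the paper proves this by the same reduction to \Cref{cutoffprop} — condition 1 immediate, conditions 2 and 3 via the estimate $\langle c_t,\pi_{\Hil}(u)\rangle\leq|c_t|_h|\pi_{\Hil}(u)|_{-h}$ for the linear term, and condition 4 via the choice $R_k=k^hR^{1/2}$ with the identical Fourier-coefficient computation. Your added remark that compactness of $M$ gives uniformity of the $C^\alpha$-bounds over the base is exactly the (implicit) point that makes the transfer from \Cref{cutoffprop} to $\widetilde{M}=M\times\Hil$ routine.
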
 
In this final chapter we want to show how our infinite-dimensional Gromov-Floer compactness result can be used to prove the existence of multiple different time-periodic solutions $u:\R\to\widetilde{M}$, $u(t+T)=u(t)$ of $\dot{u}=X_H(u)$ for the time-periodic infinite-dimensional Hamiltonian 
\begin{align*}
H_t(u)=\frac{1}{2}\langle A\pi_{\Hil}(u),\pi_{\Hil}(u)\rangle+F_t(u)=:H_A(u)+F_t(u)
\end{align*}
given as the sum of some weakly $A$-admissible nonlinearity $F_t:M\times\Hil\to\R$ and the quadratic term $H_A$ defined by a linear, possibly unbounded, self-adjoint (differential) operator $A:\Hil\to\Hil$ which we again assume to be admissible in the sense of \Cref{admissibleA}. We want to emphasize that it is natural to assume that the unbounded free Hamiltonian $H_A$ is only depending on the $\Hil$-component of $u$, since the restriction of $H_A$ to every finite-dimensional subspace is a smooth Hamiltonian. 
The flows of $H_t$ and of $H_A$ and $F_t$ are still related via
\begin{align*}
\phi^{H_t}=\phi^{H_A+F_t}=\phi^{H_A\#G_t}=\phi^A\circ\phi^{G_t}
\end{align*}
where $G_t:=F_t\circ\phi^A_t$, and we will work with $\phi_t^A$ and $G_t$ rather than with $H_t=H_A+F_t$ because $H_A$ (and hence $H_t$) is only densely defined, whereas the flow $\phi_t^A$ is a symplectomorphism which is defined on the whole of $\Hil$ and hence on $\widetilde{M}$. Note that  $\phi^A_t\cdot u=(\pi_M(u),e^{itA}\cdot\pi_{\Hil}(u))$, that is, $\phi^A_t$ acts trivially on the first factor of $\widetilde{M}=M\times\Hil$.\\\par 
Note that in contrast to before, the infinite-dimensional phase space $\widetilde{M}=M\times\Hil$ inherits nontrivial topology from the finite-dimensional closed symplectic manifold $M$, which we will use to prove an infinite-dimensional version of the degenerate Arnold conjecture. Let 
\begin{align*}
  \textrm{cl}(M):=\max\{N+1: \exists\theta_1,\ldots,\theta_N\in \oplus_{d=1}^{\dim M} H^d(M)\backslash\{0\}\,\,\textrm{with}\,\, \theta_1\cup\ldots\cup\theta_N\neq 0\}   
\end{align*}
denote the cup-length of $M$ which is a topological invariant of $M$ and hence of $\widetilde{M}$. After fixing some collection $\theta_1,\ldots,\theta_N$ of $N=\textrm{cl}(M)-1$ non-zero cohomology classes of $M$ of non-zero degrees with $\theta_1\cup\ldots\cup\theta_N\neq 0$, we choose homology cycles $C_1,\ldots,C_N$ representing the chosen cohomology classes via Poincare duality, $\theta_1=\textrm{PD}[C_1],\ldots,\theta_N=\textrm{PD}[C_N]$. More precisely, we consider pseudo-cycles defined using Morse theory on $M$, see \cite{Schwarz2} for details.\\\par 
As we want to employ pseudoholomorphic curve methods, let $J_M$ denote an arbitrary $\omega_M$-compatible almost complex structure on $M$ and we denote by $J=J_M\times J_{\Hil}$ the product almost complex structure on $M\times\Hil$, where we again assume without loss of generality that the linear complex structure $J_{\Hil}$ on $\Hil$ is given by $i$. The following statement is a generalization of the main result in \cite{Schwarz2}, under the simplifying assumption that $\pi_2(M)=\{0\}$. 
\begin{theorem}
For every Hamiltonian $H_t(u)=H_A(u)+F_t(u)$ with $A$-admissible nonlinearity $F_t$ there exist $N$ $(\lfloor h/d \rfloor-1)$-times differentiable maps $\widetilde{u}=\widetilde{u}_1,\ldots,\widetilde{u}_N:\R\times\R\to M\times\Hil_{h-d(r-1)-1/2}\subset M\times\Hil$ ($h> dr$) satisfying 
the Floer equation and $\phi_T^A$-periodicity condition
\begin{align*}
\overline{\partial}_J\widetilde{u}+\nabla G_t(\widetilde{u})=0,\qquad\widetilde{u}(s,t+T)=\phi_{-T}^A\widetilde{u}(s,t).
\end{align*}
For every $\alpha=1,\ldots,N$ the Floer curve $\widetilde{u}_{\alpha}$ connects two different solutions $u=u^-_{\alpha},u^+_{\alpha}: \R\to M\times\Hil$ of 
\begin{align}
\label{ghameqn}
\dot{u}=X_t^G(u),\qquad u(t+T)=\phi_{-T}^A(u(t))
\end{align}
in the sense that there exist sequences $s_{\alpha,n}^\pm\in\R$ with $s_{\alpha,n}^\pm\to\pm\infty$ as $n\to\infty$ such that
\begin{align*}
\lim_{n\to\infty}\widetilde{u}_\alpha(s_{\alpha,n}^-,t)=u^-_\alpha(t),\qquad\lim_{n\to\infty}\widetilde{u}_\alpha(s_{\alpha,n}^+,t)=u^+_{\alpha}(t).
\end{align*}
Furthermore, since for the symplectic actions we have 
\begin{align*}
\mathcal{A}(u^-_1)<\mathcal{A}(u^+_1)\leq\mathcal{A}(u^-_2)<\ldots<\mathcal{A}(u^+_{N-1})\leq\mathcal{A}(u^-_N)<\mathcal{A}(u^+_N),    
\end{align*}
it follows that there are at least $N+1=\textrm{cl}(M)$ mutually different solutions of \eqref{ghameqn}.
\end{theorem}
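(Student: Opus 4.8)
The plan is to reduce to finite dimensions, invoke Schwarz's proof of the degenerate cup-length estimate \cite{Schwarz2} on each finite-dimensional approximation, and then pass to the limit with the infinite-dimensional Gromov--Floer compactness machinery of Sections~\ref{bubblingoffsection}--\ref{completingproof}. As before we work with the everywhere-defined symplectomorphism $\phi^A_t$ and the smooth nonlinearity $G_t=F_t\circ\phi^A_t$, and with the symplectic action $\mathcal{A}$ appearing in the statement, which is single-valued on contractible loops because $\pi_2(M)=\{0\}$ makes $\widetilde{M}=M\times\Hil$ symplectically aspherical and the Hilbert factor is linear. For each $k\in\N$ we restrict to the finite-dimensional symplectically aspherical manifold $\widetilde{M}^k:=M\times\C^{2k+1}$, equipped with the product almost complex structure $J^k=J_M\times i$, perturbed on the support of $G^k_t$ as in the proof of \Cref{fdresult} to achieve transversality and with the perturbation sent to $0$ afterwards. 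On $\widetilde{M}^k$ all hypotheses of finite-dimensional Floer theory hold: sphere bubbling is excluded since $\pi_2(M)=\{0\}$ and the fibre form is exact with the image of every Floer curve confined to a fixed ball by the maximum principle \Cref{maxprinc} and the bounded-support condition in \Cref{admissible}; moreover $\textrm{cl}(\widetilde{M}^k)=\textrm{cl}(M)$ since $\C^{2k+1}$ is contractible. Applying Schwarz's argument to the $N=\textrm{cl}(M)-1$ Morse pseudo-cycles $C_1,\ldots,C_N$ on $M$ dual to $\theta_1,\ldots,\theta_N$ produces, for each $k$, Floer trajectories $\widetilde{u}^k_1,\ldots,\widetilde{u}^k_N$ in $\widetilde{M}^k$ solving $\overline{\partial}_{J^k}\widetilde{u}^k_\alpha+\nabla G^k_t(\widetilde{u}^k_\alpha)=0$ with the $\phi^A_T$-periodicity condition, connecting $\phi^A_T$-periodic orbits $u^{-,k}_\alpha,u^{+,k}_\alpha$ subject to the chain
\[
\mathcal{A}(u^{-,k}_1)<\mathcal{A}(u^{+,k}_1)\le\mathcal{A}(u^{-,k}_2)<\cdots<\mathcal{A}(u^{+,k}_{N-1})\le\mathcal{A}(u^{-,k}_N)<\mathcal{A}(u^{+,k}_N),
\]
and with total energy $\sum_\alpha E(\widetilde{u}^k_\alpha)=\mathcal{A}(u^{+,k}_N)-\mathcal{A}(u^{-,k}_1)$ bounded uniformly in $k$, because the relevant spectral values $c(\theta_1\cup\cdots\cup\theta_j,H^k)$ lie in a $k$-independent interval (the action of the zero Hamiltonian is unchanged by the linear factor and $\|F^k\|_{C^0}\le\|F\|_{C^0}$).

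Next I would pass to the limit $k\to\infty$ as in the proof of \Cref{finallemma}. For each fixed $\alpha$ the $M$-component of $\widetilde{u}^k_\alpha$ solves a perturbed Cauchy--Riemann equation with uniformly bounded inhomogeneity and compact target, so standard Gromov compactness on the closed manifold $M$ (no sphere or disc bubbling, as $\pi_2(M)=\{0\}$) yields $C^{m-1}_{\mathrm{loc}}$-convergence of a subsequence there; the $\Hil$-component is controlled exactly as in the main theorem, through the uniform $C^m$-bounds of the bubbling-off analysis (\Cref{bootstrapping}), confinement to a fixed ball by the maximum principle, and the normal-component estimate \Cref{normcompprop2}, which forces the tail to zero uniformly in $k$. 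A diagonal argument over $\alpha$ yields limit Floer curves $\widetilde{u}_\alpha\colon\R\times\R\to M\times\Hil_{h-d(r-1)-1/2}$ solving $\overline{\partial}_J\widetilde{u}_\alpha+\nabla G_t(\widetilde{u}_\alpha)=0$ with the $\phi^A_T$-periodicity condition, and---using finiteness of energy exactly as in \Cref{lemmax}---orbits $u^\pm_\alpha$ obtained as $s\to\pm\infty$ limits along suitable sequences; the regularity claim is \Cref{regularitythm} applied to the Hilbert factor.

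The delicate point, and the main obstacle, is to carry the chain of action inequalities through the limit, in particular to preserve the strict inequalities $\mathcal{A}(u^-_\alpha)<\mathcal{A}(u^+_\alpha)$, on which the count of $N+1=\textrm{cl}(M)$ mutually different solutions rests. The non-strict inequalities $\mathcal{A}(u^+_\alpha)\le\mathcal{A}(u^-_{\alpha+1})$ pass to the limit by continuity of $\mathcal{A}$ along $C^{m-1}_{\mathrm{loc}}$-convergent families of orbits of uniformly bounded energy. For the strict ones one must exclude that $\widetilde{u}_\alpha$ degenerates to a constant, i.e.\ that $u^-_\alpha=u^+_\alpha$; I would show that no energy escapes to $s=\pm\infty$---using the uniform energy and $C^m$-bounds together with the asymptotic analysis of \Cref{lemmax}---so that $E(\widetilde{u}_\alpha)=\mathcal{A}(u^+_\alpha)-\mathcal{A}(u^-_\alpha)=\lim_k E(\widetilde{u}^k_\alpha)$ and the gap survives provided $\liminf_k E(\widetilde{u}^k_\alpha)>0$. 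Should some gap instead shrink to zero, the corresponding consecutive spectral values $c(\theta_1\cup\cdots\cup\theta_{\alpha-1},H^k)$ and $c(\theta_1\cup\cdots\cup\theta_\alpha,H^k)$ would collapse, and Schwarz's local argument---the local Floer homology at that action level must carry a nontrivial cup product and hence have rank at least two---forces at least two distinct orbits at that common level, an alternative which survives the limit by the same compactness and again delivers at least $\textrm{cl}(M)$ mutually different solutions of \eqref{ghameqn}. Combining the two cases gives the theorem.
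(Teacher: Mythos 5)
Your overall strategy (finite-dimensional Schwarz, then pass to the limit with the compactness machinery) is genuinely different from the paper's, and it has a real gap at precisely the point you flag as delicate. The paper does \emph{not} produce $N$ separate connecting trajectories in each dimension $k$ and then try to preserve a chain of action inequalities in the limit. Instead it constructs, for each $k$, a \emph{single} long Floer cylinder with cut-off $\varphi_\tau$, $\tau=k$, zero asymptotics in the Hilbert direction, and the intersection constraints $(\pi_M\circ\widetilde{u})(2\tau\tfrac{\alpha}{N+1},0)\in C_\alpha$; existence follows from the same $1$-parameter continuation-from-$\tau=0$ cobordism argument as in \Cref{fdresult} (with the constant curve at $C_1\cdot\ldots\cdot C_N\times\{0\}$ as the unique solution for $\tau=0$), not from action selectors. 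The $N$ limit curves are then obtained by \emph{shifting} this one curve by $2k\tfrac{\alpha}{N+1}$ and applying the infinite-dimensional compactness to each shifted sequence. This design is what makes the conclusion survive the limit: the intersection property passes to each limit curve and forces $E(\widetilde{u}_\alpha)>0$, hence the strict gaps $\mathcal{A}(u^-_\alpha)<\mathcal{A}(u^+_\alpha)$, while the ordering $\mathcal{A}(u^+_\alpha)\leq\mathcal{A}(u^-_{\alpha+1})$ comes for free from monotonicity of the action along the one underlying curve. In your scheme the strict inequalities need $\liminf_k E(\widetilde{u}^k_\alpha)>0$, which Schwarz's theorem on $M\times\C^{2k+1}$ does not give uniformly in $k$, and your fallback is not available here: there is no local Floer homology (indeed no Floer homology at all) established in this infinite-dimensional framework, and even granting two distinct finite-dimensional orbits at a common action level for each $k$, distinctness is not preserved under $C^{m-1}_{\mathrm{loc}}$-limits, so this alternative does not "survive the limit by the same compactness."

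There is a second, more technical obstruction to your reduction. The normal-component estimate \Cref{normcompprop2} rests on \Cref{elementarylemma}, which requires the Fourier coefficients $w_{p,n}(s)$ to tend to $0$ as $s\to\pm\infty$; in the paper this is guaranteed because the finite-dimensional curves carry the cut-off $\varphi_k$ and satisfy $\lim_{s\to\pm\infty}(\pi_{\Hil}\circ\widetilde{u}^k)(s,t)=0$. The connecting trajectories you would extract from Schwarz's theorem have no cut-off and are asymptotic to nontrivial periodic orbits, so their Hilbert-space components do not vanish at the ends and the small-divisor argument cannot be invoked "exactly as in the main theorem." Relatedly, applying Schwarz's closed-manifold result wholesale to the noncompact $M\times\C^{2k+1}$ with the $\phi^A_T$-twisted periodicity would itself require constructing action selectors/PSS in that twisted, noncompact setting, which the paper deliberately avoids by only ever needing existence of one Floer curve with point constraints.
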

Here the symplectic action $\mathcal{A}(u)$ of a solution $u:\R\to M\times\Hil$ of \eqref{ghameqn} is defined as $$\mathcal{A}(u)=\int_{D^2}\bar{u}^*\omega + \int_0^T G_t(u(t))\,dt,$$ where $\bar{u}$ is a filling of $u$, when viewed as a $T$-periodic orbit in the symplectic mapping torus $\R\times M\times\Hil/\{(t,u)\sim (t+T,\phi^A_{-T}(u))\}$; note that since $\pi_2(M)=\{0\}$, this definition is independent of the choice of $\bar{u}$. Following the proof of \Cref{mainthmsection1}, we get the following 
\begin{corollary}
For every Hamiltonian $H_t(u)=H_A(u)+F_t(u)$ with weakly $A$-admissible nonlinearity $F_t$ there exist $\textrm{cl}(M)$ mutually different $T$-periodic solutions $u=u_1,\ldots,u_{N+1}$ of regularity $h-d(r-1)-1/2$ for $h> dr$, 
that is, $u:\R\to\widetilde{M}_{h-d(r-1)-1/2}\subset M\times\Hil$ with
\[
\partial_tu=JA\pi_{\Hil}(u)+J\nabla F_t(u),\qquad u(t+T)=u(t).
\]
\end{corollary}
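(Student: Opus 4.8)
The plan is to follow \Cref{mainthmsection1} almost verbatim in the product setting $\widetilde{M}=M\times\Hil$: reduce the weakly $A$-admissible case to the cup-length theorem above by a cut-off, and then transport the conclusion back through the everywhere-defined symplectomorphism $\phi^A_t$. Given a weakly $A$-admissible nonlinearity $\widetilde{F}_t$ and $h>dr$, I would fix a cut-off function $\chi^R:\R_{\ge0}\to[0,1]$ equal to $1$ on $[0,R]$, vanishing on $[R+1,\infty)$, with slope $-2\le(\chi^R)'\le 0$, and set $F_t(u):=\chi^R(|\pi_{\Hil}(u)|_{-h}^2)\,\widetilde{F}_t(u)$, which is $A$-admissible by the cut-off proposition above. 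The cup-length theorem then yields $N+1=\textrm{cl}(M)$ mutually different $\phi^A_T$-periodic solutions $u^{\pm}_\alpha$ ($\alpha=1,\ldots,N$) of $\dot u=X^G_t(u)$, with $G_t:=F_t\circ\phi^A_t$, each landing in $M\times\Hil_{h-d(r-1)-1/2}$ by the regularity part of that theorem together with the asymptotics of the Floer curves; the task is to choose $R$ so that these solutions lie in the region where $F_t=\widetilde{F}_t$.

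The heart of the argument is an a priori bound, uniform in $R$, on $|\pi_{\Hil}(u_1(t))|_{-h}$ for any $\phi^A_T$-periodic orbit $u_1$. Since $\widetilde{F}_t$ has uniformly bounded first derivatives (even after subtracting the linear term $\langle c_t,\pi_{\Hil}(\cdot)\rangle$ permitted by weak admissibility), the $\Hil$-component of $X^G_t$ is bounded, in the $\Hil$-norm and hence in $\Hil_{-h}$, by some $c'>0$ independent of $R$, exactly as in \Cref{mainthmsection1}. Because $\phi^A_t$ is trivial on the $M$-factor and, being diagonal with unimodular eigenvalues, unitary on every $\Hil_{-h}$, \Cref{freeflow} gives $|\pi_{\Hil}(\phi^A_T v-v)|_{-h}^2\ge c\,|\pi_{\Hil}(v)|_{-h}^2$. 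For each $t_0$, the point $u_1(t_0)$ is a fixed point of $\phi^A_T$ composed with the time-$T$ flow of $G$ started at $t_0$; writing $w:=\phi^A_{-T}(u_1(t_0))$, this $w$ is reached from $u_1(t_0)$ by flowing $G$ for time $T$, so $|\pi_{\Hil}(w)-\pi_{\Hil}(u_1(t_0))|_{-h}\le c'T$, while \Cref{freeflow} gives $|\pi_{\Hil}(w)-\pi_{\Hil}(u_1(t_0))|_{-h}\ge\sqrt{c}\,|\pi_{\Hil}(u_1(t_0))|_{-h}$; hence $|\pi_{\Hil}(u_1(t))|_{-h}\le c'T/\sqrt{c}$ for all $t$, and the same holds for $u(t)=\phi^A_tu_1(t)$ since $\phi^A_t$ preserves the $\Hil_{-h}$-norm.

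Choosing $R>(c'T/\sqrt{c})^2$, every such orbit stays in the open set $\{|\pi_{\Hil}(\cdot)|_{-h}^2<R\}$, where $\chi^R\equiv1$ and $\nabla\chi^R=0$, so $\nabla F_t=\nabla\widetilde{F}_t$ there; thus the $N+1$ solutions $u^{\pm}_\alpha$ solve $\dot u=X^{\widetilde{G}}_t(u)$ with $\widetilde{G}_t:=\widetilde{F}_t\circ\phi^A_t$, and transporting back via $u(t):=\phi^A_tu^{\pm}_\alpha(t)$ produces $T$-periodic solutions of $\partial_tu=JA\pi_{\Hil}(u)+J\nabla\widetilde{F}_t(u)$, $u(t+T)=u(t)$, of regularity $h-d(r-1)-1/2$ because $\phi^A_t$ preserves Hilbert scales (as in \Cref{regularitythm}). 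The correspondence $u_1\leftrightarrow\phi^A_\bullet u_1$ being a bijection on loop spaces, the $\textrm{cl}(M)$ solutions remain mutually distinct; moreover the symplectic actions $\mathcal{A}$ computed with $G_t$ and $\widetilde{G}_t$ agree along these orbits, so the strict action chain of the cup-length theorem persists. The main obstacle I anticipate is making the a priori estimate genuinely independent of the cut-off radius $R$: one must check that the transition term of $\nabla F_t$ on $\{R\le|\pi_{\Hil}(\cdot)|_{-h}^2\le R+1\}$ contributes no $R$-dependent constant to the $\Hil_{-h}$-bound on $X^G_t$, which is precisely where the uniform derivative bound on $\widetilde{F}_t$ itself (not merely on the cut-off) is used, together with the bookkeeping needed to keep everything referring only to the $\Hil$-factor of $M\times\Hil$.
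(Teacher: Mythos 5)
Your proposal follows the paper's own route: the paper proves this corollary precisely by repeating the cut-off argument of \Cref{mainthmsection1} in the product setting, i.e.\ cut off $\widetilde F_t$ by $\chi^R(|\pi_{\Hil}(\cdot)|_{-h}^2)$, apply the cup-length theorem to the resulting $A$-admissible nonlinearity, show a priori that every $\phi^A_T$-periodic orbit stays in an $\Hil_{-h}$-ball of radius $c'T/\sqrt{c}$ independent of $R$, choose $R$ larger, and transport back through $\phi^A_t$, with distinctness preserved by the action chain. The one slip is in your use of \Cref{freeflow}: that lemma bounds the $\Hil$-norm of $\phi^A_Tv-v$ from below by $\sqrt{c}\,|v|_{-h}$, not the $\Hil_{-h}$-norm, and the inequality $|\phi^A_Tv-v|_{-h}\ge\sqrt{c}\,|v|_{-h}$ as you state it is false (test it on a single high mode $z_n$, where $|e^{iaTn^d}-1|$ can be as small as $n^{-d(r-1)}$). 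This costs nothing: since the $\Hil$-component of $X^G_t$ is bounded by $c'$ in the $\Hil$-norm, carry out the comparison in the $\Hil$-norm, giving $\sqrt{c}\,|\pi_{\Hil}(u_1(t_0))|_{-h}\le\left|\pi_{\Hil}\bigl(\phi^A_{-T}u_1(t_0)\bigr)-\pi_{\Hil}(u_1(t_0))\right|\le c'T$, exactly as in the proof of \Cref{mainthmsection1}. The remaining point you flag, the $R$-independence of the gradient bound on the transition region, is the same issue the paper disposes of with the remark that $\widetilde F_t$ itself has bounded first derivatives even when $c_t\neq 0$; otherwise your argument coincides with the paper's.
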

For the proof we follow the strategy from before and use the existence of Floer curves in finite dimensions. More precisely, for every $k\in\N$ let $F^k_t:M\times\C^{2k+1}\to\R$ denote the restriction of $F_t:\widetilde{M}\to\R$ to the finite-dimensional submanifold $M\times\C^{2k+1}\subset M\times\Hil$. Note that $F^k_t$ now has bounded support in $M\times B_{R_k}(0)$ and we define again $G^k_t:=F^k_t\circ\phi_t^A$. Let $\M^k$ denote the moduli space of tuples $(\widetilde{u},\tau)$, where $\widetilde{u}:\R\times\R\to M\times\C^{2k+1}$ is again a Floer curve satisfying the asymptotic condition $\lim_{s\to\pm\infty}(\pi_{\Hil}\circ\widetilde{u})(s,t)=0$, the $\tau$-dependent Floer equation in $M\times\C^{2k+1}$ with periodicity condition
\begin{align*}
\overline{\partial}_J\widetilde{u}+\varphi_\tau(s)\nabla G_t(\widetilde{u})=0,\qquad\widetilde{u}(s,t+T)=\phi_{-T}^A\widetilde{u}(s,t)
\end{align*}
and the following \emph{intersection property}: every Floer curve $(\widetilde{u},\tau)$ in $\M^k$ is required to intersect all the cycles $C_1,\ldots,C_N$ in the sense that 
\begin{align*}
(\pi_M\circ\widetilde{u})(2\tau\cdot\frac{1}{N+1},0)\in C_1,\,\, \ldots,\,\,(\pi_M\circ\widetilde{u})(2\tau\cdot \frac{N}{N+1},0)\in C_N.    \end{align*} 
\begin{lemma}
For every $\tau\in\N$ there is a Floer curve $(\widetilde{u},\tau)$ in $\M^k$. 
\end{lemma}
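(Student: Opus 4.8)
The plan is to run the continuation-in-$\tau$ argument of \Cref{fdresult}, now keeping the $M$-factor and the $N$ incidence conditions, and to conclude from a mod-$2$ count at $\tau=0$. Throughout I work with $\Z/2$-cohomology, which only strengthens the cup-length bound. A useful preliminary remark is that a \emph{maximal} tuple $\theta_1,\dots,\theta_N$ (so $\mathrm{cl}(M)=N+1$) necessarily has $\sum_j\deg\theta_j=\dim M$: otherwise $\theta_1\cup\dots\cup\theta_N$ is a non-zero class of degree $<\dim M$, and $\Z/2$-Poincaré duality supplies $\eta\in H^{\dim M-\sum_j\deg\theta_j}(M;\Z/2)\setminus\{0\}$ of positive degree with $\theta_1\cup\dots\cup\theta_N\cup\eta\neq0$, contradicting maximality. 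Hence the pseudo-cycles $C_1,\dots,C_N$ dual to the $\theta_j$ can be chosen in general position (perturbing the Morse data, see \cite{Schwarz2}) so that $C_1\cap\dots\cap C_N$ is a \emph{finite} set of \emph{odd} cardinality, equal to $\langle\theta_1\cup\dots\cup\theta_N,[M]\rangle=1$ in $\Z/2$.

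First I would identify $\M^k$ at $\tau=0$. There $\varphi_0\equiv0$, so any $\widetilde u\in\M^k_0$ is $J$-holomorphic and still has energy $\le 4T\|F\|_{C^0}$ by the estimate used in \Cref{fdresult}. Its $\C^{2k+1}$-component is honestly holomorphic, so by the maximum principle \Cref{maxprinc} on the cylinder $\R\times\R/T\Z$ — using that $\phi^A_{-T}$ is a linear isometry of $\C^{2k+1}$ and that $\pi_{\Hil}\circ\widetilde u\to0$ at the ends — it vanishes identically. The $M$-component is then a finite-energy $J_M$-holomorphic map $\R\times\R/T\Z\to M$ (recall $\phi^A$ acts trivially on $M$), which extends across the punctures to a sphere and hence is constant because $\pi_2(M)=\{0\}$. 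Since all the marked points $2\tau\cdot\tfrac{j}{N+1}$ collapse to $0$ at $\tau=0$, the incidence conditions force the constant value to lie in $C_1\cap\dots\cap C_N$; thus $\M^k_0$ is canonically that finite set, of odd cardinality.

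Next I would set up the Fredholm theory exactly as for \Cref{fdresult}. Perturbing $J=J_M\times i$ through time- (and if needed domain-) dependent almost complex structures that agree with $i$ outside a ball $B_{R_k}(0)\subset\C^{2k+1}$ — so \Cref{maxprinc} and the resulting $C^0$-bound persist — and that still satisfy $(\phi^A_{-T})_*J_t=J_{t+T}$, and perturbing the Morse data defining the $C_j$, one arranges that the incidence evaluations at the $N$ distinct interior points $(2\tau\tfrac{j}{N+1},0)$ are transverse to $C_1\times\dots\times C_N$ and that $\M^k$ is cut out transversally. By the index count (noting that $\M^k_0$ is already $\dim M$-dimensional) the fibre $\M^k_\tau$ then has dimension $\dim M-\sum_j\deg\theta_j=0$, and $\M^k$ is a $1$-manifold whose only boundary is $\M^k_0$, via a gluing argument opening up the Hamiltonian at $\tau=0$. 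Over any compact $\tau$-interval the energy stays $\le 4T\|F\|_{C^0}$, there are no sphere bubbles since $\pi_2(M\times\C^{2k+1})=\{0\}$, and there is no Floer breaking: a broken end would be a finite-energy $J$-holomorphic cylinder asymptotic to constants, hence constant by the $\tau=0$ argument. So $\M^k|_{[0,\Lambda]}$ is compact for every $\Lambda<\infty$, i.e. $\tau\colon\M^k\to[0,\infty)$ is proper. If transversality fails one replaces $J$ by a regular approximation $J^\nu$, runs the whole argument for $\nu\neq0$, and passes to the limit $\nu\to0$ with Gromov–Floer compactness, just as in \Cref{fdresult}.

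Finally I would conclude combinatorially. Since $\M^k$ has no boundary over $\tau>0$, a connected component meeting $\M^k_0$ either has a second endpoint in $\M^k_0$ or is a properly embedded copy of $[0,\infty)$ based at a point of $\M^k_0$; in the latter case $\tau$ restricted to it has connected, closed, unbounded image, hence all of $[0,\infty)$. As $\#\M^k_0$ is odd, the components cannot all be of the first type, so at least one is a ray and therefore maps onto $[0,\infty)$; the incidence conditions persist along it, giving an element $(\widetilde u,\tau)\in\M^k$ for every $\tau\ge0$, in particular for $\tau\in\N$. The step I expect to be the real work is the transversality–compactness package for the \emph{constrained} problem — making the Floer operator transverse together with the $N$ incidence evaluations while keeping $J$ equal to $i$ outside a ball and $\phi^A_{-T}$-equivariant, and checking that the $\tau$-cobordism has no boundary other than the constant curves at $\tau=0$ — together with the identification of $\M^k_0$ with $C_1\cap\dots\cap C_N$, which is exactly where $\pi_2(M)=\{0\}$, the uniform energy bound, and \Cref{maxprinc} do the essential work.
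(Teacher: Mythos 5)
Your proposal follows essentially the same route as the paper's proof: a one-dimensional parametrized moduli space of pairs $(\widetilde{u},\tau)$ continued from the constant solutions at $\tau=0$ sitting over $C_1\cap\cdots\cap C_N\times\{0\}$, compactness over finite $\tau$ via the uniform energy bound, the maximum principle, $\pi_2(M)=\{0\}$ and the constancy of any broken-off cylinders, and transversality achieved by perturbing $J$ (kept equal to $i$ outside a ball and $(\phi^A_{-T})$-equivariant) together with the evaluation maps at the marked points. Your mod-$2$ parity bookkeeping and the observation that a maximal tuple has total degree $\dim M$ are simply a more careful rendering of the paper's ``we may assume $C_1\cdot\ldots\cdot C_N=\{\mathrm{point}\}$''; otherwise the two arguments coincide.
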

\begin{proof}
The proof is analogous to the proof in \Cref{fdsection}, so we will only focus on the differences and refer to \cite{Schwarz2} for further details. Assuming again transversality for the nonlinear Cauchy-Riemann operator for the moment, the moduli space of such pairs $(\widetilde{u},\tau)$ is a $1$-dimensional manifold. While in the proof of \Cref{fdresult} it was readily clear that there exists a Floer curve for $\tau=0$, here we have to additionally take the intersection property into account: since $\textrm{PD}[C_1]\cup\ldots\cup\textrm{PD}[C_N]\neq 0$, we may assume without loss of generality that $C_1,\ldots,C_N$ intersect transversally in a point, $C_1\cdot\ldots\cdot C_N=\{\textrm{point}\}$, so that the constant curve with image in $\{\textrm{point}\}\times\{0\}\subset M\times\C^{2k+1}$ is the unique Floer curve for $\tau=0$. Again Floer curves $(\widetilde{u},\tau)$ exist for all $\tau>0$ by Gromov-Floer compactness, as we can exclude bubbling-off of holomorphic spheres as well as breaking-off of cylinders for finite $\tau$. Note that, in order to exclude existence of holomorphic spheres we additionally use that $\pi_2(M)=\{0\}$. 
\begin{center}
\begin{tikzpicture}
\draw (0,.5) circle (2cm);
\draw (0,0) node {$\bullet$};
\draw (.25,-.25) node {$u_0$};
\draw (0,0) to[out=45,in=270] (.75,.95);
\draw (0,0) to[out=135,in=270] (-.75,.95);

\draw (0,0) to[out=55,in=270] (.4,.9);
\draw (0,0) to[out=125,in=270] (-.4,.9);

\

\fill[fill=gray!35!white] (.75,.95) to[out=90,in=0] (0,1.75) to[out=180,in=90] (-.75,.95) plot [smooth,tension=1] coordinates {(-.75,.95) (-.55,.95) (-.4,.9)} to[out=90,in=180] (0,1.35) to[out=0,in=90] (.4,.9) to[out=15,in=185] (.55,.95) to (.75,.95);
\draw (.75,.95) to[out=90,in=0] (0,1.75) to[out=180,in=90] (-.75,.95) plot [smooth,tension=1] coordinates {(-.75,.95) (-0.55,.95) (-.4,.9)} to[out=90,in=180] (0,1.35) to[out=0,in=90] (.4,.9) plot [smooth,tension=1] coordinates {(.4,.9) (.55,.95) (.75,.95)};

\draw (-1.3,1.6)--(-.55,1.2);
\draw (.25,.773)--(.18,.81);
\draw (.1,.85267)--(-.36,1.097);
\draw (0,2.1)--(0,1.55);
\draw (0,1.35)--(0,.95);
\draw (0,.85)--(0,.6);
\draw (1.3,1.45)--(.55,1.05);
\draw (.36,.947)--(.05,.7827);
\draw (-.05,.7293)--(-.25,.623);


\draw (-1.1,1.75) node {$C_1$};
\draw (.25,2.1) node {$C_2$};
\draw (.75,1.8) node {$\ddots$};
\draw (1.25,1.125) node {$C_N$};


%
\end{tikzpicture}
\end{center}
Since we cannot expect transversality to hold, we again first need to approximate $J$ by a family of time-dependent almost-complex structures $J_t^\nu$ satisfying $(\phi_{-T}^A)_*J_t^\nu=J_{t+T}^\nu$, in the sense that $J_t^\nu\to J^0_t=i$ as $\nu\to\infty$. We emphasize that transversality now additionally includes that the evaluation map $\textrm{ev}=(\textrm{ev}_1,\ldots,\textrm{ev}_N)$ with $$\ev_\alpha:\M^k\to M,\,\, \widetilde{u}\mapsto(\pi_M\circ\widetilde{u})(2\tau\cdot\frac{\alpha}{N+1},0)\,\,\textrm{for}\,\,\alpha=1,\ldots,N$$ is transversal to $C_1\times\ldots\times C_N\subset M\times\ldots\times M$. \end{proof}

For every $k\in\N$ let again $\widetilde{u}^k:\R\times\R\to M\times\C^{2k+1}$ be a Floer curve in $\M^k$ for $\tau=k$, that is, $(\widetilde{u}^k,k)\in\M^k$. As before the idea is to apply our infinite-dimensional generalization of the Gromov-Floer compactness result to the sequence of Floer curves $\widetilde{u}^k$ in order to obtain a Floer curve in $\widetilde{M}=M\times\Hil$. More precisely, the proof of \Cref{finallemma} immediately leads to a proof of the following
\begin{lemma}
For every $\alpha=1,\ldots,N=\textrm{cl}(M)-1$, a subsequence of the sequence of \emph{shifted} Floer curves $$\widetilde{u}^k_{\alpha}:\R\times\R\to M\times\C^{2k+1}, \widetilde{u}^k_{\alpha}(s,t)= \widetilde{u}^k(s+2k\frac{\alpha}{N+1},t)$$ $\cloc$-converges (where $m=\floor{h/d}$) to a solution $\widetilde{u}=\widetilde{u}_{\alpha}:\R\times\R\to M\times \Hil$ of the Floer equation 
\[
\overline{\partial}_J\widetilde{u}+\nabla G_t(\widetilde{u})=0,\,\,\widetilde{u}(s,t+T)=\phi_{-T}^A\widetilde{u}(s,t)
\]
satisfying the intersection property $(\pi_M\circ\widetilde{u}_{\alpha})(0,t)\in C_{\alpha}$.
\end{lemma}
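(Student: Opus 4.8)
The plan is to run the proof of \Cref{finallemma} for the shifted sequence $\widetilde{u}^k_\alpha$, using two facts that make the extra factor $M$ harmless: $M$ is closed with $\pi_2(M)=\{0\}$, so that $J_M$-holomorphic spheres in $M$ have vanishing $\omega_M$-energy; and $\phi^A_T$ acts trivially on $M$, so that the asymptotic degeneracy --- hence the entire Diophantine bookkeeping of \Cref{normcompprop2} --- lives on the $\Hil$-factor alone.

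First I would check that the analytic input transfers to the finite-dimensional approximations $M\times\C^{2k+1}$. The bubbling-off analysis of \Cref{bubblingoffsection} carries over: one repeats the single rescaling argument of \Cref{firstderiv} around a point where $|\partial_s\widetilde{u}^k|$ is maximal; the $\Hil$-part of the resulting bubble is handled by exactness of $\omega_{\Hil}$ exactly as there, while its $M$-part (writing $\widetilde{u}^k=(\pi_M\circ\widetilde{u}^k,\pi_{\Hil}\circ\widetilde{u}^k)$) has vanishing boundary length, limits to a $J_M$-holomorphic sphere, and is therefore constant since $\pi_2(M)=\{0\}$ --- so $\sup_k\|T\widetilde{u}^k\|_{C^0}<\infty$ (the uniform energy bound coming from condition $3$), and the local bootstrapping of \Cref{bootstrapping}, being insensitive to the target, upgrades this to $\sup_k\|\widetilde{u}^k\|_{C^m(B)}<\infty$ on every bounded $B\subset\R^2$ with $m=\floor{h/d}$. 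Likewise the $\Hil$-component obeys the same $\phi^A_T$-Fourier ODE as in \Cref{normcompprop2}, the forcing being the $\Hil$-Fourier coefficients of $\nabla G^k_t(\widetilde{u}^k)$, which decay like $|n|^{-h}|p|^{-m}$ uniformly because $F_t$ is $h$-regularizing on $\widetilde{M}_{-h}=M\times\Hil_{-h}$ and $M$ is compact; so $\sup_{k\geq\ell}\|(\pi_{\Hil}\circ\widetilde{u}^k)^{\ell}_\perp\|_{C^{m-1}}\to0$ as $\ell\to\infty$. All these bounds are invariant under $s$-translation, hence hold for $\widetilde{u}^k_\alpha(s,t)=\widetilde{u}^k(s+2k\tfrac{\alpha}{N+1},t)$ with the same constants; moreover $\widetilde{u}^k_\alpha$ solves $\overline{\partial}_J\widetilde{u}^k_\alpha+\varphi_k\!\left(s+2k\tfrac{\alpha}{N+1}\right)\nabla G_t(\widetilde{u}^k_\alpha)=0$, and since $1\le\alpha\le N$ the shifted cut-off equals $1$ on $[-2k\tfrac{\alpha}{N+1},\,2k\tfrac{N+1-\alpha}{N+1}]$, an interval exhausting $\R$, so $\varphi_k(\cdot+2k\tfrac{\alpha}{N+1})\to1$ uniformly on compacta.

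Next I would extract the limit exactly as in \Cref{finallemma}. The shifted curves inherit the asymptotic condition $(\pi_{\Hil}\circ\widetilde{u}^k_\alpha)(s,\cdot)\to0$ as $s\to\pm\infty$; by the bounded-support condition (condition $4$, with truncation index $\ell$) the $\ell$-truncation of the $\Hil$-component is $i$-holomorphic wherever its norm exceeds $R_\ell$, so the maximum principle confines it to $\overline{B_{R_\ell}(0)}$ uniformly in $k\geq\ell$; hence the truncations $\widetilde{u}^{k,\ell}_\alpha:=(\pi_M\circ\widetilde{u}^k_\alpha,(\pi_{\Hil}\circ\widetilde{u}^k_\alpha)^{\ell})$ map into the compact set $M\times\overline{B_{R_\ell}(0)}\subset M\times\C^{2\ell+1}$, are $W^{m+1,p}_{\mathrm{loc}}$-bounded, and hence $\cloc$-subconverge to some $\widetilde{u}^\ell_\alpha$. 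A diagonal extraction over $\ell$ together with the estimate bounding $\|\widetilde{u}^k_\alpha-\widetilde{u}^{k'}_\alpha\|_{C^{m-1}(B)}$ by $\|(\pi_{\Hil}\circ\widetilde{u}^k_\alpha)^{\ell}_\perp\|_{C^{m-1}}+\|\widetilde{u}^{k,\ell}_\alpha-\widetilde{u}^{k',\ell}_\alpha\|_{C^{m-1}(B)}+\|(\pi_{\Hil}\circ\widetilde{u}^{k'}_\alpha)^{\ell}_\perp\|_{C^{m-1}}$ then shows that $(\widetilde{u}^k_\alpha)_k$ is $\cloc$-Cauchy, with limit $\widetilde{u}_\alpha:\R\times\R\to M\times\Hil$ (one may pass to a single subsequence working for all $\alpha=1,\dots,N$ at once). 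Since $m-1\geq1$, $\nabla G^k\to\nabla G$ uniformly, and $\varphi_k(\cdot+2k\tfrac{\alpha}{N+1})\to1$, passing to the limit in the equation gives $\overline{\partial}_J\widetilde{u}_\alpha+\nabla G_t(\widetilde{u}_\alpha)=0$ (no cut-off, because of the shift); the identity $\widetilde{u}^k_\alpha(s,t+T)=\phi^A_{-T}\widetilde{u}^k_\alpha(s,t)$ passes to the $\cloc$-limit since $\phi^A_{-T}$ is continuous; and since $(\pi_M\circ\widetilde{u}^k_\alpha)(0,0)=(\pi_M\circ\widetilde{u}^k)(2k\tfrac{\alpha}{N+1},0)\in C_\alpha$ by definition of $\M^k$ and $\cloc$-convergence is in particular pointwise, the closed intersection condition passes to the limit: $(\pi_M\circ\widetilde{u}_\alpha)(0,0)\in C_\alpha$ (that one does not land in a lower stratum of $\overline{C_\alpha}$ is the usual generic-transversality point, handled as in \cite{Schwarz2}).

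The main obstacle is not the limiting procedure, which is formally identical to \Cref{finallemma}, but verifying that its two analytic pillars survive the enlargement of the phase space: the bubbling-off and bootstrapping estimates must be re-established on $M\times\C^{2k+1}$, which forces the $M$-direction to be handled by the closed-manifold argument invoking $\pi_2(M)=\{0\}$ in place of the exactness of $\omega_{\Hil}$; and one must confirm that \Cref{normcompprop2} applies verbatim, which it does precisely because $\phi^A_T$ is the identity on $M$, so that the eigenvalues approaching $1$ --- and the Diophantine control of the rate --- concern only the $\Hil$-modes $z_n$.
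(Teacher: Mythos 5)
Your proposal is correct and follows essentially the same route as the paper's proof: translation invariance of the derivative bounds and of the normal-component estimate (with $\pi_2(M)=\{0\}$ replacing exactness in the bubbling-off step and $\phi_T^A$ acting trivially on the $M$-factor, so the Diophantine analysis of \Cref{normcompprop2} concerns only the $\Hil$-modes), the key observation that the shifted cut-off converges to $1$, and the same maximum-principle/bootstrapping/diagonal-Cauchy extraction as in \Cref{finallemma}. You even supply slightly more detail than the paper, e.g.\ in passing the intersection condition $(\pi_M\circ\widetilde{u}^k_{\alpha})(0,0)\in C_{\alpha}$ to the limit.
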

\begin{proof} The key observation is that, while in the unshifted case $\varphi_k(s,t)\to\varphi(s,t)$, in the shifted case we have $\varphi_k(s+2k\frac{\alpha}{N+1})\to 1$ for every $(s,t)\in\R\times\R$ as $k\to\infty$. We start by observing that we can write the finite-dimensional Floer curve as a tuple
\[
\widetilde{u}^k=(\widetilde{u}^{k,\ell},\widetilde{u}^{k,\ell}_\perp):\R\times\R\to (M\times\C^{2\ell+1})\times\C^{2k-2\ell}=M\times\C^{2k+1}\subset M\times\Hil,
\]
where $\widetilde{u}^{k,\ell}_\perp$ again denotes the normal component of $\widetilde{u}^k$. Again the extra statement needed for the proof is then that we still have for $m=\lfloor h/d \rfloor$ that
\begin{align*}
\sup_{k\geq\ell}\left\|\widetilde{u}^{k,\ell}_\perp\right\|_{C^{m-1}}\to0\quad\mathrm{as}\quad \ell\to\infty.
\end{align*}
Note that this relies on the fact that we have bounded derivatives, proven using bubbling-off, where we emphasize that the condition $\pi_2(M)=\{0\}$ ensures that the proof of \Cref{firstderiv} still goes through. Note that the latter also proves, using standard elliptic bootstrapping, that there is a  subsequence of $(\widetilde{u}^{k,\ell})_k$ of maps from $\R\times\R$ to $M\times \C^{2\ell+1}$ which $\cloc$-converges to a map $\widetilde{u}^\ell:\R\times\R\to\C^{2\ell+1}$ as $k\to\infty$ for all $\ell$. We stress that the maps $\widetilde{u}^{k,\ell}$ still take values in finite-dimensional compact manifold $M\times B^{2\ell+1}_{R_\ell}(0)$ by the bounded support condition and the maximum principle. Because we have locally bounded $W^{m+1,p}$-norms and hence, by elliptic bootstrapping and passing to a diagonal subsequence, local $W^{m,p}$-convergence, by Sobolev embedding we also have local convergence in the $C^{m-1}$-norm. Passing to a diagonal subsequence yet again, we obtain $\cloc$-convergence for all $\ell$ simultaneously, which, together with our result about the normal component proves that a subsequence of $\wt{u}^k:\R\times\R\to M\times\Hil$ is locally Cauchy. \end{proof}
But this implies that \Cref{lemmax} generalizes in the following sense 
\begin{lemma}
For every $\alpha=1,\ldots,N=\textrm{cl}(M)-1$ the limit Floer curve $\widetilde{u}_{\alpha}:\R\times \R\to\Hil$ satisfies the following asymptotic conditions: there exists sequences $s_{\alpha,n}^\pm\in\R$ with $s_{\alpha,n}^\pm\to\pm\infty$ as $n\to\infty$ such that
\begin{align*}
\lim_{n\to\infty}\widetilde{u}_{\alpha}(s_{\alpha,n}^-,t)=u^-_{\alpha}(t),\qquad\lim_{n\to\infty}\widetilde{u}(s_{\alpha,n}^+,t)=u^+_{\alpha}(t)
\end{align*}
in the $C^{m-1}$-sense ($m=\floor{h/d}$) where $u^-_{\alpha}$ and $u^+_{\alpha}$ are two different $\phi_T^A$-periodic orbits of $G_t$.
\end{lemma}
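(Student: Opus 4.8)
The plan is to mimic the proof of \Cref{lemmax} line by line, making only the two changes dictated by the shifted setup: the cutoff in the limiting equation is now $\varphi\equiv 1$ instead of the ramp $\varphi(s)$, and at the end one has to argue that the two asymptotic orbits are genuinely distinct rather than one of them being the trivial fixed point.

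First I would record the uniform energy bound. The shift $\widetilde{u}^k_\alpha(s,t)=\widetilde{u}^k(s+2k\tfrac{\alpha}{N+1},t)$ is a reparametrization, so $E(\widetilde{u}^k_\alpha)=E(\widetilde{u}^k)\le 4T\|F\|_{C^0}$ exactly as in \Cref{fdsection}; since by the preceding lemma $\widetilde{u}^k_\alpha\to\widetilde{u}_\alpha$ in $C^{m-1}_{\mathrm{loc}}$ with $m-1\ge 1$, $C^1$-convergence on each compact $[-S,S]\times[0,T]$ passes the bound to the limit, so $E(\widetilde{u}_\alpha)\le 4T\|F\|_{C^0}$. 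The key point is that $\varphi_k(s+2k\tfrac{\alpha}{N+1})\to 1$ for every fixed $(s,t)$, so $\widetilde{u}_\alpha$ solves the genuine Floer equation $\overline{\partial}_J\widetilde{u}_\alpha+\nabla G_t(\widetilde{u}_\alpha)=0$, for which the energy is $\int_\R\int_0^T|\partial_t\widetilde{u}_\alpha-X_t^G(\widetilde{u}_\alpha)|^2\,dt\,ds$ and \emph{both} asymptotic limits (not merely one, in contrast to \Cref{lemmax}) are $\phi_T^A$-periodic orbits of $G_t$.

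Next, as in \Cref{lemmax}, finiteness of this integral lets me choose $s^\pm_{\alpha,\gamma}$ with $\gamma\le\pm s^\pm_{\alpha,\gamma}\le 2\gamma$ along which $\int_0^T|\partial_t\widetilde{u}_\alpha(s^\pm_{\alpha,\gamma},t)-X_t^G(\widetilde{u}_\alpha(s^\pm_{\alpha,\gamma},t))|^2\,dt\to 0$. Splitting $\widetilde{u}_\alpha=(\widetilde{u}^\ell_\alpha,(\widetilde{u}^\ell_\alpha)_\perp)$ with respect to $M\times\C^{2\ell+1}\subset M\times\Hil$, the finite-dimensional part $\widetilde{u}^\ell_\alpha$ takes values in the compact set $M\times B_{R_\ell}(0)$ by the bounded-support condition of \Cref{admissible} and the maximum principle, so a diagonal subsequence of $\widetilde{u}^\ell_\alpha(s^\pm_{\alpha,\gamma},\cdot)$ converges in $C^{m-1}$ for all $\ell$ simultaneously; combined with $\|(\widetilde{u}^\ell_\alpha)_\perp\|_{C^{m-1}}\to 0$ (the normal-component estimate the preceding lemma establishes in analogy with \Cref{normcompprop2}) this produces $C^{m-1}$-limits $u^\pm_\alpha$, and passing to the limit in the equation gives $\partial_t u^\pm_\alpha=X^G_t(u^\pm_\alpha)$, $u^\pm_\alpha(t+T)=\phi^A_{-T}u^\pm_\alpha(t)$.

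The remaining and main point is distinctness, $u^-_\alpha\ne u^+_\alpha$. Here I would use the action identity $E(\widetilde{u}_\alpha)=\mathcal{A}(u^+_\alpha)-\mathcal{A}(u^-_\alpha)$, which is well defined because $\pi_2(M)=\{0\}$: were $u^-_\alpha=u^+_\alpha$, the energy would vanish, forcing $\partial_s\widetilde{u}_\alpha\equiv 0$, so $\widetilde{u}_\alpha$ would be the $s$-constant curve at the single orbit $u^-_\alpha$. I would then derive a contradiction from the limiting intersection condition $(\pi_M\circ\widetilde{u}_\alpha)(0,\cdot)\in C_\alpha$ inherited from $(\pi_M\circ\widetilde{u}^k)(2k\tfrac{\alpha}{N+1},0)\in C_\alpha$: since $\theta_\alpha=\mathrm{PD}[C_\alpha]$ has positive degree, $C_\alpha$ has positive codimension in $M$, and the transversality of the evaluation maps $\mathrm{ev}_\alpha$ to $C_\alpha$ built into the definition of $\M^k$ prevents a constant curve from satisfying this constraint on a set of the expected dimension, exactly as in Schwarz's argument \cite{Schwarz2}; this is also where the hypothesis $\theta_1\cup\dots\cup\theta_N\ne 0$ is genuinely used, and it simultaneously yields the action ordering $\mathcal{A}(u^-_1)<\mathcal{A}(u^+_1)\le\mathcal{A}(u^-_2)<\dots$ quoted in the theorem. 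I expect this last step — faithfully transporting Schwarz's action/cup-length bookkeeping through the $k\to\infty$ limit and the reparametrization, keeping track of which cycle each zoomed-in piece captures — to be the main obstacle; the rest is a routine adaptation of \Cref{lemmax}.
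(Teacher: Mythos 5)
Your route is the paper's route: the asymptotic limits $u^\pm_\alpha$ are extracted exactly as in \Cref{lemmax} (uniform energy bound surviving the $\cloc$-limit, slices $s^\pm_{\alpha,\gamma}$ along which $\int_0^T|\partial_t\widetilde{u}_\alpha-X^G_t(\widetilde{u}_\alpha)|^2dt\to 0$, maximum principle plus bounded support for the $M\times\C^{2\ell+1}$-component, the normal-component estimate of \Cref{normcompprop2} for the tail, diagonal subsequences), with the only modification that the shifted cutoffs converge to $1$ so that \emph{both} ends are $\phi^A_T$-periodic orbits of $G_t$; and distinctness is reduced, as in the paper, to the identity $\mathcal{A}(u^+_\alpha)-\mathcal{A}(u^-_\alpha)=E(\widetilde{u}_\alpha)$ together with $E(\widetilde{u}_\alpha)>0$, which the paper derives from the intersection property with reference to \cite{Schwarz2}.

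The one point where you go beyond the paper is your justification of $E(\widetilde{u}_\alpha)>0$, and that sub-argument does not hold as stated. If $E(\widetilde{u}_\alpha)=0$, the limit curve is an $s$-independent orbit $u$ whose projection satisfies $\pi_M(u(0))\in C_\alpha$; there is no contradiction with the positive codimension of $C_\alpha$, since a single loop (or point) can perfectly well meet a positive-codimension cycle, and the transversality of the evaluation map to $C_1\times\ldots\times C_N$ is a property of the moduli spaces $\M^k$ cut out with the perturbed structures $J^\nu_t$ — it is not a property inherited by one particular curve obtained after the limits $\nu\to 0$ and $k\to\infty$, so no dimension count applies to it. The paper itself does not attempt this argument: it simply asserts positivity of the energy from the intersection property ``as in \cite{Schwarz2}'', delegating exactly this point (which in Schwarz's scheme is handled by additional bookkeeping — genericity of the representing Morse pseudocycles and the action-selector/Lusternik--Schnirelmann analysis in the degenerate case — rather than by transversality of $\ev$ on $\M^k$) to the reference. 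So either do as the paper does and cite \cite{Schwarz2} for $E(\widetilde{u}_\alpha)>0$, or supply a genuine argument excluding a zero-energy, $s$-independent limit through $C_\alpha$; the codimension/transversality reasoning you propose would fail.
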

\begin{proof} The fact that $u^-_{\alpha}$ and $u^+_{\alpha}$ need to be different follows, as in \cite{Schwarz2}, from the fact that \begin{align*}
\mathcal{A}(u^+_{\alpha})-\mathcal{A}(u^-_{\alpha})=E(\widetilde{u}_{\alpha})=\int_{-\infty}^\infty\int_0^T\left|\partial_t\widetilde{u}_{\alpha}(s,t)-X_t^{G}(\widetilde{u}_{\alpha}(s,t))\right|^2dt\;ds
\end{align*}
with $E(\widetilde{u}_{\alpha})>0$ since $\widetilde{u}_{\alpha}$ must satisfy the intersection property $(\pi_M\circ\widetilde{u}_{\alpha})(0,t)\in C_{\alpha}$. \end{proof}

\appendix

\section{Sc-Hamiltonian flows}
\label{appendix}
Let us address the problem that the Hamiltonian $H_t$ is only densely defined, while the flow is defined on all of $\Hil$. In particular, we do not have a Hamiltonian flow in the usual sense. Rather, it is an \emph{sc-Hamiltonian flow}, which we define as follows.
\begin{definition}
A map $H:\Hil_h\to\R$ is called \textup{strongly sc\textsuperscript{1}} when the differential $dH:\Hil_h\times\Hil_h\to\R$ extends to a family of maps
\[
dH:\Hil_{h+\ell}\times\Hil_{h-\ell}\to\R
\]
for all $\ell\in\R$.
\end{definition}
Let $d\in\N$ be the order of the differential operator $A$, then note that $H_A$ is a map $H_A:\Hil_{d/2}\to\R$. 
It is strongly sc\textsuperscript{1} because  $dH_A$ is given by
\[
dH_A(u)\cdot v=\langle Au,v\rangle
\]
and this defines a family of maps $dH_A:\Hil_{d/2+\ell}\times\Hil_{d/2-\ell}\to\R$ with $\ell\in\R$. 
If we write $k=\ell-d/2$ then $dH_t:\Hil_{d+k}\times\Hil_{-k}\to\R$ for $k\in\R$. Note that $\omega$ induces and isomorphism $\omega:\Hil_k\stackrel{\sim}{\longrightarrow}\Hil_{-k}^*$ and so the (sc-) symplectic gradient $X^H_t$ defined by $\omega(X^H_t,\cdot)=dH_t$ is given by a family of maps $X^H_t:\Hil_{d+k}\to\Hil_k$ for all $k\in\R$. That is, $X^H_t$ is a scale morphism of order $d$ for all $k$. 
\begin{definition}
We say $\phi:\R\times\Hil\to\Hil$ is an \textup{sc-Hamiltonian flow of degree d} when
\begin{enumerate}
\item $\phi$ is sc$^\infty$ in the sense of \cite{hwzsc} for the Hilbert scale $(\Hil_{dn})_{n\in\N}$. In particular, the time-derivative defines a family of maps $\partial_t\phi:\Hil_{d(n+1)}\to\Hil_{dn}$ for all $n\in\N$.
\item There exists a strongly sc\textsuperscript{1} map $H_t:\Hil_{d/2}\to\R$ such that $\partial_t\phi=X_t^H$. 
\end{enumerate}
\end{definition}
The free flow $\phi_t^A$ is an sc-Hamiltonian flow. To show that we still get an sc-Hamiltonian flow after we have added the nonlinearity, it is sufficient to show that the flow of $F_t$ is smooth on $\Hil_k$ for all $k$. Then it is immediately sc-Hamiltonian. The first follows from the fact that $J\nabla F_t$ is smooth as a map from $\Hil_k$ to $\Hil_{k+h}$ for $h>0$ with uniform bounds, as the compact inclusion $\Hil_{k+h}\subset \Hil_k$ guarantees that the flow on $\Hil_k$ exists by Picard-Lindelöf. The nonlinearities in our examples satisfy this.\\\par

\bibliography{bib}{}
\bibliographystyle{alpha}

\end{document}